\documentclass[11pt]{article}

\usepackage{amsmath,amssymb,amsthm}
\usepackage{graphicx}
\usepackage{booktabs}
\usepackage{geometry}
\usepackage[title]{appendix}%
\usepackage{textcomp}%
\usepackage{manyfoot}%
\usepackage{algorithmicx}%
\usepackage{listings}%
\usepackage{lmodern}
\usepackage{enumitem}
\usepackage{amsfonts}

\usepackage{xcolor}
\usepackage{color}
\usepackage{floatrow}
\usepackage{makeidx}
\usepackage{float}
\usepackage{mathtools}
\usepackage{commath}
\usepackage{graphicx}

\usepackage{hyperref}  
\usepackage{url}       
\usepackage{booktabs}  
\usepackage{microtype} 
\usepackage{multirow}
\usepackage{makecell}
\usepackage{algorithm}
\usepackage{algpseudocode}
\usepackage{algcompatible}
\usepackage{siunitx}
\usepackage{subcaption}
\usepackage{soul}
\usepackage{comment}

\usepackage{nicefrac}  
\usepackage{xfrac}
\usepackage[numbers]{natbib}

\newtheorem{theorem}{Theorem}[section]
\newtheorem{lemma}[theorem]{Lemma}
\newtheorem{proposition}[theorem]{Proposition}
\newtheorem{corollary}[theorem]{Corollary}
\theoremstyle{remark}
\newtheorem{remark}[theorem]{Remark}
\newtheorem{definition}[theorem]{Definition}

\newcommand{\spann}[1]{\operatorname{span}\lbrace #1 \rbrace}

\title{Anchor-Based Function Extrapolation with Proven Bounds and Projection Guarantees}

\author{
Guy Hay\thanks{School of Mathematical Sciences, Tel Aviv University. \texttt{guyhay@mail.tau.ac.il}}
\and
Nir Sharon\thanks{School of Mathematical Sciences, Tel Aviv University. \texttt{nir.sharon@math.tau.ac.il}}
}

\date{} 

\begin{document}
\maketitle

\begin{abstract}
Classical approximation and learning methods are typically optimized for interpolation over a sampled domain $\Omega$, with no guarantees on their behavior in an extrapolation region $\Xi$, where small in-domain errors may amplify. We develop a model-agnostic framework that recasts extrapolation as a feasibility and projection problem with rigorous guarantees. The approach is built around anchor functions, auxiliary constructions for which one can certify an upper bound on the $\Xi$-distance to the unknown target function. Such certificates define feasible sets that are proven to contain the true function. Given any baseline approximation (e.g., least-squares or regularized regression), we obtain a corrected extrapolation by projecting the baseline onto the feasible set; the resulting predictor is proven not to increase the error on $\Xi$, and we prove quantitative bounds on the improvement. We establish new stability constants governing extrapolation, including a tight spectral condition number and a numerically stable inner-domain bound that connects in-domain error to extrapolation risk. To reduce conservatism of worst-case certification, we also propose probabilistic anchor functions that yield high-confidence feasible sets. Numerical experiments, including geomagnetic field modeling and nonlinear oscillators, demonstrate substantial reductions in extrapolation error and corroborate the theoretical predictions.
\end{abstract}

\noindent\textbf{Keywords:} Function extrapolation; Spectral stability bounds; Feasibility and projection methods; Extrapolation on embedded manifolds

\medskip
\noindent\textbf{Mathematics Subject Classification (2000):} 65D15; 65D05; 15A18; 46C05

\section{Introduction}
Extrapolating a function beyond its sampled domain is a long-standing challenge in numerical analysis, with consequences for approximation theory, inverse problems, and modern regression \citep{engl1996regularization,hastie2009elements,trefethen2019approximation}. Classical constructions, including least squares (LS), regularized regression, and kernel-based methods~\cite{hastie2009elements, scholkopf2002learning, trefethen2019approximation}, are typically designed to minimize a fitting objective over the sampled region $\Omega$. While such objectives are well aligned with interpolation, they exert no direct control over the behavior of the resulting approximation in the extrapolation region $\Xi$. Consequently, extrapolation can be fundamentally unstable: small discrepancies on $\Omega$ can be strongly amplified on $\Xi$, even when the in-domain fit is excellent.

Recent theory makes this instability precise. In~\cite{hay2024function}, the amplification of sample-domain error into extrapolation error is quantified via an \emph{extrapolation condition number}, showing that minimizing empirical error on $\Omega$ does not imply meaningful performance on $\Xi$. This motivates formulations in which extrapolation is constrained directly in the target domain $\Xi$, rather than treated as an uncontrolled byproduct of an interpolation-oriented objective.

While~\cite{hay2024function} quantified extrapolation instability and proposed a neural-network correction heuristic, the present work develops a fully geometric feasibility-and-projection framework equipped with rigorous deterministic and probabilistic certification tools. Specifically, anchors are auxiliary approximations equipped with certified control in $\Xi$. From a collection of anchors and associated radii, we define a \emph{feasible set} of functions that remain bounded relative to the anchors on $\Xi$; see Definitions~\ref{definition:feasible_space} and~\ref{definition:full_feasible_space}. Given an arbitrary approximation $g$, regardless of how it is produced, we compute a corrected approximation $h$ by projecting $g$ onto a certified feasible ball, Definition~\ref{definition:feasible_space}. This projection defines a model-agnostic correction layer that depends only on certified control in $\Xi$. The analysis shows that the projection cannot increase the unknown extrapolation error on $\Xi$, and when $g$ lies outside the feasible set, it yields a quantifiable improvement with explicit bounds.

The perspective is closely aligned with localized reliability in trust-region methods for optimization~\cite{ConnGouldToint2000, NocedalWright2006}. In this analogy, anchors serve as trusted local models, while the feasible set serves as a functional trust region that restricts admissible behavior to the extrapolation domain $\Xi$, where stability is required. Unlike classical trust-region methods, where reliability is defined in parameter space, here the trust region is imposed directly in function space over a spatial domain.

A central question is how to construct anchor radii in a manner that is both rigorous and practically informative. Anchors may be available \emph{a priori}, for example, from physical constraints or trusted surrogate models, or they may be constructed \emph{a posteriori} from an existing approximation by certifying how in-domain error transfers to $\Xi$. To make this constructive, we develop certification tools that relate $\mathcal{E}_\Omega=\|f-g\|_{\Omega}$ to $\mathcal{E}_\Xi=\|f-g\|_{\Xi}$. The condition-number bound of~\cite{hay2024function}, summarized in Definition~\ref{def:classical_kappa} and Theorem~\ref{theorem:all_kappa_calculation}, provides such a certificate but can be overly conservative. We therefore derive a tight spectral bound based on the largest eigenvalue of the $\Xi$--Gram matrix formed from an $\Omega$--orthonormal basis; see Definition~\ref{def:spectral_kappa} and Theorem~\ref{theorem:all_kappa_calculation_improved}. This bound is tight by the Rayleigh-Ritz principle and is always sharper than the earlier estimate, as shown in Theorem~\ref{theorem:kappa_improved_always_better}. In addition, we introduce an inner-domain bound in Theorem~\ref{theorem:inner_kappa_calculation} that trades universality for improved numerical robustness, mitigating instabilities that may arise when evaluating bases or Gram matrices in extrapolative regimes.

While the spectral bound of Theorem~\ref{theorem:all_kappa_calculation_improved} is tight in a worst-case sense, it is governed by extremal directions that may not be representative of practical approximation errors. This motivates a second, probabilistic layer of certification. To better reflect typical behavior, we introduce a probabilistic refinement based on a directional error model. Conditioning on the in-domain magnitude $\|f-g\|_{\Omega}$, we model the normalized coefficient error direction in an $\Omega$--orthonormal basis as isotropic, uniform on the unit sphere, a standard hypothesis in directional statistics~\cite{mardia2000directional}. Under this model, the amplification $\|f-g\|_{\Xi}^2/\|f-g\|_{\Omega}^2$ is a random Rayleigh quotient. When the $\Xi$--Gram spectrum exhibits a dominant mode, the amplification is well approximated by a scaled Beta law, enabling explicit quantiles via classical distribution theory~\cite{johnson1995continuous}. The outcome is a family of high-confidence radii $\kappa_{\rho}$ such that
\[
\mathbb{P}\!\left(\|f-g\|_{\Xi}^2 \le \kappa_{\rho}\,\|f-g\|_{\Omega}^2\right)\ge \rho,
\]
with $\kappa_{\rho}$ typically substantially smaller than the eigenvalue-based constant. These probabilistic radii shrink feasible sets and strengthen the projection correction precisely in regimes where worst-case feasibility is too weak to be informative, consistent with high-dimensional concentration phenomena~\cite{vershyninHDP}.

This work builds on the anchor-based extrapolation framework of \cite{hay2024function}, but differs in three key respects. First, we introduce two extrapolation condition numbers; one is a tight spectral constant (attained by an extremal eigenpair of a $\Xi$-Gram operator), sharpening the generally non-tight bound in \cite{hay2024function}. Second, we replace the heuristic machine-learning correction of \cite{hay2024function} by an explicit $\Xi$-projection and prove a non-worsening extrapolation guarantee, together with computable lower and upper bounds on the improvement under feasibility; this projection step is model-agnostic and can be used as a post-processing layer on top of any baseline predictor. Third, we introduce a probabilistic anchor construction that yields a controllable radius for the feasible set, allowing one to tune the size of the admissible region (and hence the conservatism of the certificate) via a prescribed confidence level. In summary, we propose a geometric feasibility-and-projection framework for function extrapolation that separates interpolation from extrapolation and provides certified deterministic and probabilistic stability guarantees for the latter. By enforcing control directly on the extrapolation domain, the method provides a model-agnostic correction layer with rigorous performance bounds.

The contributions of this paper are:
\begin{enumerate}
    \item We establish a geometric feasible-set framework for extrapolation on $\Xi$ in which anchor functions generate a certified region that contains the unknown target and admits explicit size control.
    \item We prove that projecting an arbitrary approximation onto this region yields a model-agnostic correction layer that cannot increase the extrapolation error on $\Xi$ and provides explicit lower and upper bounds on the improvement whenever the original approximation is outside the feasible set.
    \item We develop rigorous tools for anchor creation and radius certification, including a Rayleigh-Ritz tight spectral bound and a stable inner-domain bound.
    \item We introduce a probabilistic extrapolation condition number based on the Rayleigh quotient of the $\Xi$–Gram matrix, yielding high-confidence feasible radii via explicit quantiles.
\end{enumerate}

\section{Problem formulation and prior work} \label{sec:problem_formulation}

We consider the problem of extrapolating a function $f$ from its samples and open the discussion with some required notation, followed by formulating the problem. 

\subsection{Problem formulation}

We study extrapolation of an unknown real-valued function from a \emph{sample domain} $\Omega$ to a (possibly disjoint) \emph{extrapolation domain} $\Xi$. We view both $\Omega$ and $\Xi$ as manifolds embedded in $\mathbb{R}^n$, and we consider a class $\mathcal{F}$ of candidate functions defined on $\Omega\cup\Xi$. On $\Xi$ we equip $\mathcal{F}$ with a norm $\|\cdot\|_{\Xi}$ induced by an inner product $\langle\cdot,\cdot\rangle_{\Xi}$. The ideal (but unattainable) extrapolation objective is therefore
\begin{equation}
g^{\ast}=\arg\min_{g\in \mathcal{F}} \norm{g-f}_\Xi ,
\label{eqn:ie_ideal_objective}
\end{equation}
where $f$ denotes the unknown target function.

For concreteness and to connect with standard approximation settings, we assume that $\mathcal{F}$ is finite-dimensional with a given basis $\{ \phi_k \}_{k=1}^d$. Hence any $g\in\mathcal{F}$ can be written as $g=\sum_{k=1}^d \beta_k \phi_k$, and in particular
\[
f=\sum_{k=1}^d \alpha_k \phi_k,
\]
with unknown coefficients $\alpha_k$, $k=1,\ldots,d$. Under this representation, estimating $f$ on $\Xi$ is equivalent to recovering (or accurately approximating) its coefficients from information on $\Omega$.

\begin{remark}[More general function spaces]
\label{remark:function_space}
We assume that $\mathcal{F}$ is a vector space in the above formulation. In some applications, however, additional prior information must be incorporated. For example, one may require functions to satisfy properties such as boundedness or monotonicity. Although these constraints can be imposed on elements of a linear space, the subsets they define are generally not closed under addition or scalar multiplication and therefore do not constitute linear subspaces. In this sense, such constraints introduce a nonlinear structure within $\mathcal{F}$, yielding a more restrictive class of admissible functions. We later show how these constraints can be incorporated into our method.  
\end{remark}

As noted in Remark~\ref{remark:function_space}, the proposed framework readily accommodates additional structural constraints. For clarity of presentation, we use the terms \emph{function space} and \emph{vector space} interchangeably throughout, unless explicitly stated otherwise.

In practice, $f$ is not observed directly; instead, we are given samples in $\Omega$ corrupted by noise.

\begin{definition}[Extrapolation problem] \label{def_extra_function_problem}
Let $f$ be a real-valued function defined over $\Omega$ and $\Xi$. We observe the data:
\begin{equation} \label{eqn:noisy_samples}
    y_i = f(x_i) + \varepsilon_i, 
    \quad x_i \in \Omega, \quad i = 0,\ldots,N ,
\end{equation}
where $\{\varepsilon_i\}_{i=0}^N$ are i.i.d. random variables. 
The problem is to estimate $f$ over $\Xi$ in the sense of~\eqref{eqn:ie_ideal_objective}.
\end{definition}

To separately quantify accuracy on the sampled region and on the extrapolation region, we introduce domain-specific norms and corresponding error measures.
\begin{definition}[Domain error and root error]
Let $f$ be the target function and $g$ an approximation, and let 
$D$ be a domain equipped with a norm $\|\cdot\|_D$.
We define the squared error of $g$ with respect to $f$ on $D$ by
\begin{equation}
  E_D(f,g) := \| f - g \|_D^2.
\end{equation}
When the reference function $f$ is clear from the context, we abbreviate
\[
E_D(g) := E_D(f,g).
\]
In addition, we use the corresponding unsquared (root) error
\begin{equation}
  \mathcal{E}_D(g) := \bigl( E_D(g) \bigr)^{1/2}
                     = \| f - g \|_D,
\end{equation}
and we write, in particular, $E_\Omega(g)$, $E_\Xi(g)$ and
$\mathcal{E}_\Omega(g)$, $\mathcal{E}_\Xi(g)$ for the sample and
extrapolation domains, respectively.
\label{eq_ie_mse_objective_2}
\end{definition}

Extrapolation is fundamentally under-determined from noisy samples on $\Omega$ alone. We therefore assume access to additional functions---\emph{anchors}---that are informative specifically on the extrapolation domain $\Xi$ (e.g., physics-based surrogates, coarse simulators, historical models, or bounds derived from analytic structure). The role of anchors is not to replace learning from data on $\Omega$, but to constrain or regularize behavior on $\Xi$.

We next introduce anchor functions. A related notion appears in \cite{hay2024function}; however, in the present work, anchors serve as the primary geometric mechanism underlying the feasibility-and-projection framework.
\begin{definition}[Anchor functions]
Let $\delta_j>0$, $j=1,\dots, M$ be a set of positive numbers. A set of functions $\{a_j\}_{j=1}^M$ are \(\delta_j\)-anchor functions for \(f\) on \(\Xi\) if,
\begin{equation} 
\mathcal{E}(a_j) = \|f-a_j\|_{\Xi}\le \delta_j,\quad j=1,\dots,M.
\nonumber
\end{equation}
\label{def_anchor_functions}
\end{definition}

With anchors in hand, we study extrapolation under both noisy observations on $\Omega$ and certified proximity information on $\Xi$.

\begin{definition}[Anchored extrapolation problem] \label{def_anchor_function_problem}
Extrapolate a function $f$ from noisy samples of~\eqref{eqn:noisy_samples} to $\Xi$ given a set of \(\delta_j\)-anchor functions.
\end{definition}
\begin{remark}[Certified anchor tolerance]\label{rem:certified-delta}
The anchor parameters $\delta_j$ should be interpreted as \emph{certified upper bounds} (possibly conservative) accompanying the anchor functions, rather than as oracle access to the unknown quantities $\|a_j - f\|_{\Xi}$. Throughout the paper, we assume that these parameters are chosen so that
\begin{equation}\label{eq:delta-certificate}
\mathcal{E}(a_j) \le \delta_j, \qquad j=1,\dots,M.
\end{equation}

Importantly, such certificates can be constructed from observable in-domain information. In Section~\ref{sec:extrapolation method} we derive extrapolation bounds that map an estimate of the approximation error on $\Omega$ to an explicit upper bound on the error in $\Xi$. These results provide a concrete data-driven procedure for selecting $\delta_j$ (or, in the probabilistic setting, choosing it according to a prescribed confidence level), thereby ensuring that~\eqref{eq:delta-certificate} holds without requiring access to $f$ on $\Xi$.

Choosing $\delta_j$ larger than necessary merely enlarges the feasible set, resulting in a more conservative correction. In contrast, underestimating $\delta_j$ may violate feasibility by excluding the true function from the admissible set.
\end{remark}

\section{A geometric feasibility-and-projection framework for extrapolation} \label{sec:extrapolation method}

We now introduce the core framework underlying our approach. 
Rather than attempting to directly minimize extrapolation error on $\Xi$, 
we reformulate extrapolation as a geometric feasibility problem in function space. 
Anchor functions, equipped with certified tolerance parameters, define a region of admissible behavior on $\Xi$ that is guaranteed to contain the true function. 
Given any baseline approximation, we obtain a corrected predictor by projecting it onto this certified feasible set. 
This perspective separates interpolation from extrapolation: the former is driven by data on $\Omega$, while the latter is controlled by geometric constraints on $\Xi$.

\subsection{Anchor function extrapolation}

Given anchor functions $\{a_i\}_{i=1}^{M}$ satisfying~\eqref{eq:delta-certificate}, we construct an extrapolation procedure for a noisy target function $f$ observed only on $\Omega$. 

Let $B=\{\phi_k\}_{k=1}^{d}$ be a basis spanning $\mathcal{F}=\mathrm{span}\{\phi_k\}_{k=1}^{d}$, and assume that $a_i \in \mathcal{F}$ for $i=1,\ldots,M$ and $f \in \mathcal{F}$.

\begin{definition}[anchor feasible set]
Given an anchor function $a \in \mathcal{F}$ and a tolerance parameter $\delta > 0$ satisfying $\|a - f\|_{\Xi} \le \delta$, we define the associated feasible set by
\[
\mathcal{S}(a,\delta) := \{ g \in \mathcal{F} \;:\; \|g - a\|_{\Xi} \le \delta \}.
\]
\label{definition:feasible_space}
\end{definition}
   

\begin{definition}[minimal feasible set]
Let $\{a_i\}_{i=1}^{M} \subset \mathcal{F}$ be anchor functions with associated tolerances $\{\delta_i\}_{i=1}^{M}$ satisfying $\|a_i - f\|_{\Xi} \le \delta_i$ for each $i$. 
The minimal feasible set is defined as the intersection of the individual anchor feasible sets:
\[
\mathcal{S} := \bigcap_{i=1}^{M} \mathcal{S}(a_i,\delta_i).
\]
\label{definition:full_feasible_space}
\end{definition}

We prove some general properties of the minimal feasible set, including an extrapolation error bound for all functions in the space. In later sections, we will introduce a method to choose a function within this space with a different proven error bound.
\begin{lemma} \label{lemma:ext_bound}
    Given anchor functions \( \{a_i\}_{i=1}^{M} \), with distance parameters \( \{\delta_i\}_{i=1}^{M} \) for \( f \), and minimal feasible set  \( s \) for all \( \{a_i\}_{i=1}^{M} \), the following hold:
    \begin{enumerate}
        \item $f$ is in the feasible set, that is \( f \in \mathcal{S} \).
        \item For any function $g\in \mathcal{S}$ it holds that: $\|f-g\|_\Xi \leq 2 \min{\{\delta_i\}_{i=1}^{M}}$.
    \end{enumerate}
    \label{lemma:search_space_size_upper_bound}
\end{lemma}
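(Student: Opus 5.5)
Both parts follow directly from Definitions~\ref{definition:feasible_space} and~\ref{definition:full_feasible_space} together with the triangle inequality for the norm $\|\cdot\|_\Xi$. For part 1, I would fix an arbitrary index $i\in\{1,\dots,M\}$. The anchor certificate~\eqref{eq:delta-certificate} gives $\|f-a_i\|_\Xi\le\delta_i$, and since $f\in\mathcal{F}$ by the standing assumption, this says exactly that $f\in\mathcal{S}(a_i,\delta_i)$. Because $i$ was arbitrary, $f$ lies in every individual anchor feasible set, and hence in their intersection $\mathcal{S}$.

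For part 2, let $g\in\mathcal{S}$ and choose an index $j$ with $\delta_j=\min_i\delta_i$. Since $g\in\mathcal{S}\subseteq\mathcal{S}(a_j,\delta_j)$, we have $\|g-a_j\|_\Xi\le\delta_j$. Combining this with the certificate $\|f-a_j\|_\Xi\le\delta_j$ and inserting $a_j$ between $f$ and $g$, the triangle inequality gives
\[
\|f-g\|_\Xi\le\|f-a_j\|_\Xi+\|a_j-g\|_\Xi\le 2\delta_j=2\min_i\delta_i .
\]
The same computation with any index $i$ in place of $j$ gives $\|f-g\|_\Xi\le 2\delta_i$ for every $i$, so taking the minimum over $i$ yields the same bound.

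There is no substantive obstacle. The one point needing care is that $\|\cdot\|_\Xi$ must be a genuine norm, or at least a seminorm on $\mathcal{F}$, so that the triangle inequality holds. This is the case because the paper defines it as induced by the inner product $\langle\cdot,\cdot\rangle_\Xi$. No positive-definiteness is required, so even a seminorm that vanishes on some nonzero elements of $\mathcal{F}$ would suffice.
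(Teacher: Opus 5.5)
Your proposal is correct and follows the paper's proof essentially verbatim. Part 1 observes that $f$ lies in each $\mathcal{S}(a_i,\delta_i)$ and hence in the intersection, and part 2 picks the index attaining $\min_i \delta_i$ and applies the triangle inequality through $a_j$ (your side remark that a seminorm would suffice is valid but not needed).
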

\begin{proof}
    We prove each claim separately.

\smallskip
\noindent\textit{Proof of (1).}
By assumption, $f \in \mathcal{F}$ and $a_i \in \mathcal{F}$ for all $i=1,\ldots,M$. 
From the definition of the feasible sets $\mathcal{S}_i$, we have $\|f-a_i\|_\Xi \le \delta_i$ for all $i$, and hence $f \in \mathcal{S}_i$ for each $i$. 
Since the minimal feasible set is defined by $\mathcal{S} = \bigcap_{i=1}^M \mathcal{S}_i$, it follows immediately that $f \in \mathcal{S}$.

\smallskip
\noindent\textit{Proof of (2).}
Without loss of generality, reorder the anchors so that 
\[
\delta_1 = \min_{1 \le i \le M} \delta_i.
\]
Let $g \in \mathcal{S}$. Then $g \in \mathcal{S}_1$, and by definition of $\mathcal{S}_1$,
\[
\|g-a_1\|_\Xi \le \delta_1.
\]
Using the triangle inequality,
\[
\|g-f\|_\Xi 
\le \|g-a_1\|_\Xi + \|a_1-f\|_\Xi 
\le \delta_1 + \delta_1 
= 2\delta_1.
\]
Since $\delta_1 = \min_{1 \le i \le M} \delta_i$, the desired bound follows.
\end{proof}
We note that when only one anchor function is present, the bound in Lemma~\ref{lemma:ext_bound} is tight. 

\subsection{Projection guarantees in the feasible ball and set}

Having defined the anchor feasible sets, we now analyze the effect of projecting an arbitrary approximation onto such a set. The results in this subsection are purely geometric: they require only that the true function $f$ lies inside the feasible region and do not depend on how the tolerance parameters were obtained. 

We show that projection onto a certified feasible ball cannot increase the extrapolation error on $\Xi$, and that whenever the original approximation lies outside the feasible set, the projection yields a strictly improved extrapolation estimate with explicit quantitative bounds.

The following lemma will be used in subsequent proofs and visualized in Figure~\ref{fig:lemma_vis}.
\begin{lemma}
Let $C$ be the circle of radius $R$ centered at the origin, let $P=(p,0)$ be a point with $p>R$ lying on the positive $x$--axis, let $B=(x,y)$ be any point with $x^2+y^2\le R^2$, and let $Q$ denote the orthogonal projection of $P$ onto $C$ (so $Q=(R,0)$). Define
\[
 d(B)=d(x,y)=|PB|-|QB|=\sqrt{(p-x)^2+y^2}-\sqrt{(R-x)^2+y^2}.
\]
Then $d$ attains a global maximum and a global minimum on the closed disk $\{x^2+y^2\le R^2\}$ and the sharp bounds
\[
 \frac{(p-R)^{3/2}}{\sqrt{p}}\ \le\ d(x,y)\ \le\ p-R
\]
hold for all such $B$. The upper bound $d=p-R$ is attained for every $B$ on the horizontal diameter $\{(x,0):|x|\le R\}$, and the lower bound is attained exactly at the two boundary points
\[
 x_\star=\frac{R(p+R)}{2p},\qquad y_\star=\pm\sqrt{R^2-x_\star^2},
\]
for which
\[
 d(x_\star,\pm y_\star)=\frac{(p-R)^{3/2}}{\sqrt{p}}.
\]
\label{lemma:circle_point_circle_with_bounds}
\end{lemma}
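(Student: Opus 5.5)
The plan is to treat the two bounds separately. The upper bound follows from the triangle inequality. The lower bound comes from reducing to the boundary circle and then doing one-variable calculus. Existence of a global maximum and minimum is immediate, since $d$ is continuous on the compact closed disk.

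\emph{Upper bound.} Since $|PQ|=p-R$, the triangle inequality gives $|PB|\le |PQ|+|QB|=(p-R)+|QB|$, so $d(B)\le p-R$. Equality holds iff $Q$ lies on the segment $PB$. Because $P$ and $Q$ both sit on the positive $x$-axis with $Q$ between the origin and $P$, this happens exactly when $B=(x,0)$ with $x\le R$. Intersected with the disk, this is the horizontal diameter $\{(x,0):|x|\le R\}$, so the bound is attained there and is sharp.

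\emph{Reduction to the boundary.} Fix $x\in[-R,R]$ and regard $d$ as a function of $t=y^2$. Then
\[
\partial_t d=\frac{1}{2|PB|}-\frac{1}{2|QB|}.
\]
Since $x\le R<p$, we have $p-x>R-x\ge 0$, hence $|PB|\ge|QB|$ and $\partial_t d\le 0$, with strict inequality since $p-x>R-x$. So for fixed $x$, $d$ is nonincreasing in $|y|$, and the minimum over the disk is attained on the circle $x^2+y^2=R^2$.

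\emph{Minimization on the circle.} On the circle we have $|PB|^2=p^2+R^2-2px$ and $|QB|^2=2R(R-x)$. Define
\[
F(x)=\sqrt{p^2+R^2-2px}-\sqrt{2R(R-x)},\qquad x\in[-R,R],
\]
so that $F'(x)=-p/|PB|+R/|QB|$.
\begin{itemize}
\item Setting $F'=0$ gives $p^2|QB|^2=R^2|PB|^2$, which is linear in $x$. Solving yields $x_\star=R(p+R)/(2p)$, and $x_\star\in(-R,R)$ because $p>R$.
\item The critical point is a minimum. As $x\to R^-$ we have $|QB|\to 0$, so $F'\to+\infty$. At $x=-R$ we get $F'(-R)=-p/(p+R)+1/2<0$. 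With a unique critical point, $x_\star$ is the global minimizer on $[-R,R]$.
\item Evaluating at $x_\star$: $R-x_\star=R(p-R)/(2p)$, so $|QB|=R\sqrt{(p-R)/p}$. The critical-point relation gives $|PB|=(p/R)|QB|=p\sqrt{(p-R)/p}$.
\item Therefore $d=(p-R)\sqrt{(p-R)/p}=(p-R)^{3/2}/\sqrt{p}$, attained exactly at $(x_\star,\pm y_\star)$.
\end{itemize}

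\emph{Where the care is needed.} The main point requiring care is the boundary reduction, specifically the sign argument $|PB|\ge|QB|$. The remaining work is the routine solve of the critical-point equation. Uniqueness of the minimizers follows from two facts: the decrease in $|y|$ is strict, and the critical point on the circle is unique.
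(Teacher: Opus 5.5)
Your proof is correct, and it takes a partly different route from the paper.

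The paper works entirely with first-order conditions. It computes $\partial d/\partial x$ and $\partial d/\partial y$ on the open disk and shows that interior stationary points must have $y=0$, where $d\equiv p-R$. It then treats the boundary with the Lagrange system $\nabla d=\lambda\nabla g$, from which it extracts $r_P/r_Q=p/R$ and $x_\star=R(p+R)/(2p)$. Which candidate is the global max and which the global min is then settled by listing all candidates and invoking existence of extrema on the compact disk.

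You differ in three places:
\begin{itemize}
\item You get the upper bound, and its exact equality set, from the triangle inequality $|PB|\le|PQ|+|QB|$, with no calculus.
\item You replace the interior stationary-point analysis with monotonicity of $d$ in $t=y^2$ for fixed $x$. This shows globally, not just near the axis, that minimizers lie on the circle and that interior points never minimize.
\item You replace the Lagrange system with the one-variable function $F(x)$ on the circle. Its critical equation $p|QB|=R|PB|$ is the same relation the paper derives.
\end{itemize}

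What your version buys: the sign check $F'(-R)<0$ and $F'(x)\to+\infty$ as $x\to R^-$ certifies $x_\star$ as the unique global minimizer directly. The non-differentiable point $Q=(R,0)$, where the Lagrange equations are undefined, needs no separate treatment. The ``every point of the diameter'' and ``exactly at the two points'' claims follow from the strictness of each step.

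What the paper's KKT route buys: it is mechanical and produces all candidate extrema in one pass. However, its off-axis argument in Step~1 is only local, so its global conclusions rest on completing that enumeration. Your argument is more self-contained.
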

\begin{proof}
    Proof is in Appendix~\ref{appendix:lemma_circle_point_circle_with_bounds_proof}
\end{proof}
\begin{figure}[htbp]
    \centering
    \includegraphics[width=0.6\textwidth]{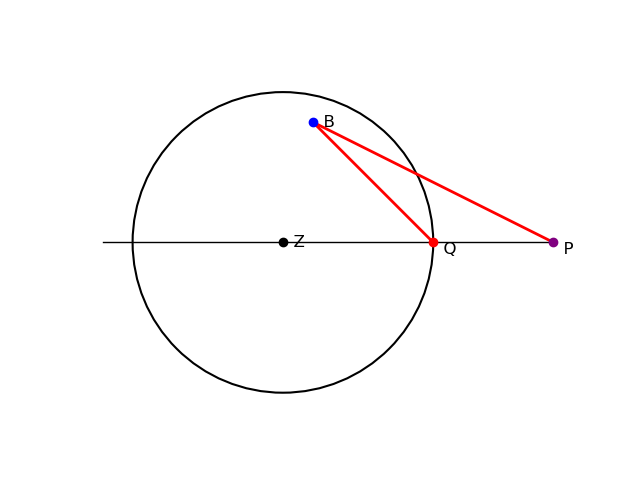}
    \caption{Geometric configuration used in Lemma~\ref{lemma:circle_point_circle_with_bounds} illustrating the projection of an exterior point onto a feasible ball and the resulting distance reduction.}
    \label{fig:lemma_vis}
\end{figure}

\begin{theorem} \label{thm:gurantee1}
    Let $a$ be an anchor function and let $\mathcal{S}$ be the feasible set of Definition~\ref{definition:feasible_space}, with radius $\delta \ge 0$. Let $g$ be an extrapolation approximation function, and let $h$ denote the metric projection of $g$ onto $\mathcal{S}(a,\delta)$. Then,
    \[
    \|f - h\|_\Xi \le \|f - g\|_\Xi.
    \]
    Moreover, if $g \notin \mathcal{S}(a,\delta)$ then
    \[
    \|f - h\|_\Xi < \|f - g\|_\Xi,
    \]
    and the improvement in the extrapolation error is followed by the bounds of Lemma~\ref{lemma:circle_point_circle_with_bounds}. Namely, denote $\Delta = \|g - h\|_\Xi$, the difference between the extrapolation error obeys
    \[
    \Delta \sqrt{\frac{\Delta}{\delta + \Delta}}
    \;\le\;
    \|f - g\|_\Xi - \|f - h\|_\Xi
    \;\le\;
    \Delta.
    \]
    \label{theorem:5}
\end{theorem}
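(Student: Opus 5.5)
The plan is to reduce the statement to the planar configuration of Lemma~\ref{lemma:circle_point_circle_with_bounds}. The space $\mathcal{F}$ with $\langle\cdot,\cdot\rangle_\Xi$ is a finite-dimensional inner product space; if $\|\cdot\|_\Xi$ is only a seminorm on $\mathcal{F}$, we pass to the quotient by its kernel, since all quantities involved depend only on $\Xi$-distances. In this space $\mathcal{S}(a,\delta)$ is a closed Euclidean ball, so the metric projection is explicit:
\[
h = g \quad\text{if } g\in\mathcal{S}(a,\delta), \qquad h = a + \delta\,\frac{g-a}{\|g-a\|_\Xi} \quad\text{otherwise}.
\]
By Lemma~\ref{lemma:ext_bound}(1), $f\in\mathcal{S}(a,\delta)$. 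If $g\in\mathcal{S}(a,\delta)$, then $h=g$ and the claim is an equality, so the first inequality holds trivially. We then assume $g\notin\mathcal{S}(a,\delta)$ and set $p=\|g-a\|_\Xi>\delta$, so that $\Delta=\|g-h\|_\Xi=p-\delta>0$.

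Next we choose a two-dimensional affine plane through $a$ that contains $g$ and $f$. If $f-a$ is parallel to $g-a$, any plane through the line $a+\mathbb{R}(g-a)$ will do. In this plane we fix orthonormal coordinates with origin $a$ and positive $x$-axis along $g-a$. The intersection of the ball with the plane is the disk of radius $R=\delta$, and in these coordinates $g\mapsto P=(p,0)$, $h\mapsto Q=(\delta,0)$, and $f\mapsto B=(x,y)$ with $x^2+y^2\le\delta^2$. Because the coordinates are isometric, $\|f-g\|_\Xi=|PB|$ and $\|f-h\|_\Xi=|QB|$. Hence
\[
\|f-g\|_\Xi-\|f-h\|_\Xi=d(B).
\]

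Lemma~\ref{lemma:circle_point_circle_with_bounds} now gives $(p-R)^{3/2}/\sqrt{p}\le d(B)\le p-R$. Substituting $p-R=\Delta$ and $p=\delta+\Delta$, the lower bound becomes
\[
\frac{\Delta^{3/2}}{\sqrt{\delta+\Delta}}=\Delta\sqrt{\frac{\Delta}{\delta+\Delta}},
\]
and the upper bound becomes $\Delta$, which is exactly the stated two-sided estimate. Since $\Delta>0$, the lower bound is strictly positive, and this gives the strict inequality $\|f-h\|_\Xi<\|f-g\|_\Xi$.

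There are two degenerate cases to handle. When $\delta=0$, the ball is the single point $a$, so $h=a=f$ and the improvement is $\|f-g\|_\Xi=\Delta$; this agrees with both bounds, since $\sqrt{\Delta/\Delta}=1$. Apart from this, the main care is in justifying the planar reduction: the projection formula and the isometric embedding of the plane spanned by $g-a$ and $f-a$. This is routine once the inner-product structure is in place. All of the genuine analytic difficulty sits in Lemma~\ref{lemma:circle_point_circle_with_bounds}, which we are allowed to assume.
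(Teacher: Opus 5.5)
Your proposal is correct and follows the paper's proof. Both restrict to a two-dimensional affine plane through $a$, $f$, $g$ and read off the bounds from Lemma~\ref{lemma:circle_point_circle_with_bounds} with $R=\delta$ and $p=\delta+\Delta$. Your explicit formula $h=a+\delta\,(g-a)/\|g-a\|_\Xi$ also closes a small gap in the paper: its reverse-triangle-inequality step only gives $\Delta\ge\|g-a\|_\Xi-\delta$, and it never checks that $h$ lands at $Q=(\delta,0)$.
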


\begin{proof}
If $g\in \mathcal{S}(a,\delta)$, then by definition of the minimization problem we have $h=g$, and the statement follows trivially. 
Assume henceforth that $g\notin \mathcal{S}(a,\delta)$.

Recall that $h$ is defined as a best approximation of $g$ over the feasible ball,
\[
h \in \arg\min_{u\in \mathcal{S}(a,\delta)} \|u-g\|_{\Xi}.
\]
Define the distance from $g$ to the feasible set by
\[
\Delta := \|h-g\|_{\Xi} = \inf_{u\in \mathcal{S}(a,\delta)} \|u-g\|_{\Xi}.
\]
Since $\mathcal{S}(a,\delta)=\{u\in\mathcal{F}:\|u-a\|_{\Xi}\le \delta\}$ is the closed $\Xi$-ball of radius $\delta$ centered at $a$ and $g$ lies outside it, the reverse triangle inequality implies that for any $u\in \mathcal{S}(a,\delta)$,
\[
\|g-u\|_{\Xi} \ge \|g-a\|_{\Xi}-\|u-a\|_{\Xi} \ge \|g-a\|_{\Xi}-\delta,
\]
hence
\[
\Delta = \|g-a\|_{\Xi}-\delta
\qquad\text{and therefore}\qquad
\|g-a\|_{\Xi}=\delta+\Delta.
\]

Consider the affine subspace $\mathrm{aff}\{a,f,g\}$ spanned by the three points $a,f,g$, which has dimension at most two. Restricting all points and distances to this subspace does not change any of the norms appearing in the claim. 
We may therefore apply Lemma~\ref{lemma:circle_point_circle_with_bounds} to the configuration consisting of the $\Xi$-ball of radius $\delta$ centered at $a$, an interior point $f$ satisfying $\|f-a\|_{\Xi}\le \delta$, and an exterior point $g$ satisfying $\|g-a\|_{\Xi}=\delta+\Delta$, with $h$ being the closest point to $g$ on the ball.

Lemma~\ref{lemma:circle_point_circle_with_bounds} then yields the strict improvement
\[
\|f-h\|_{\Xi} < \|f-g\|_{\Xi},
\]
and moreover provides the quantitative bounds
\[
\Delta \sqrt{\frac{\Delta}{\delta+\Delta}}
\;\le\;
\|f-g\|_{\Xi}-\|f-h\|_{\Xi}
\;\le\;
\Delta.
\]
This shows that the reduction in extrapolation error is always at most the distance from $g$ to the feasible set, and is bounded below by the same distance times the factor $\sqrt{\Delta/(\delta+\Delta)}$.
\end{proof}

Theorem~\ref{thm:gurantee1} turns anchor certification into a genuine extrapolation guarantee. Under feasibility, the correction step directly controls the quantity of interest, namely the error on $\Xi$ with respect to the unknown target $f$. This stands in contrast to approaches that improve fit on the observation domain $\Omega$ and then appeal to a condition-number bound to infer performance on $\Xi$; when the extrapolation condition number is large, such bounds can be too loose for reductions on $\Omega$ to translate into meaningful improvements on $\Xi$. 

Here, the projection is performed in the $\Xi$-norm itself, so the update constitutes a certified non-worsening (and, when $g\notin S$, strictly improving) post-processing step for extrapolation rather than an indirect consequence of $\Omega$-domain accuracy. Moreover, the theorem provides computable lower and upper bounds on the improvement, quantifying the possible reduction in $\|g-f\|_{\Xi}$ in terms of the distance of the baseline predictor to the certified feasible set.

The projection result established above was derived for a single feasible ball. However, in practice, the minimal feasible set is given by the intersection of multiple anchor balls. Since each anchor feasible set is a closed convex subset of the Hilbert space $(F,\|\cdot\|_{\Xi})$, their intersection is also closed and convex. This allows us to extend the non-worsening projection guarantee to the full multi-anchor feasible set.
\begin{theorem}[Projection onto convex feasible sets]\label{thm:convex_projection}
Let $\mathcal S = \bigcap_{i=1}^M \mathcal S_i$ be the minimal feasible set, where each
\[
\mathcal S_i = \{h\in F:\|h-a_i\|_{\Xi}\le \delta_i\}
\]
is a closed $\Xi$--ball. Then $\mathcal S$ is closed and convex. In addition, let $g\in F$ and let $h$ be the $\Xi$--metric projection of $g$ onto $\mathcal S$, i.e.,
\[
h\in\arg\min_{u\in\mathcal S}\|u-g\|_{\Xi}.
\]
If $f\in\mathcal S$, then
\[
\|f-h\|_{\Xi} \le \|f-g\|_{\Xi}.
\]
Moreover, if $g\notin\mathcal S$, the inequality is strict.
\end{theorem}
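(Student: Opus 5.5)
The plan is to use the standard variational characterization of metric projections onto closed convex sets in a Hilbert space. The setting is the finite-dimensional space $\mathcal F$ with the inner product $\langle\cdot,\cdot\rangle_\Xi$. This is a genuine inner product on $\mathcal F$ as assumed in Section~\ref{sec:problem_formulation}, and since $\mathcal F$ is finite dimensional the space is complete.

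\textbf{Closed and convex.} First establish that each $\mathcal S_i$ is closed and convex. Closedness follows because $u\mapsto\|u-a_i\|_\Xi$ is continuous and $\mathcal S_i$ is the preimage of $[0,\delta_i]$. Convexity follows from the triangle inequality: $\|tu+(1-t)v-a_i\|_\Xi\le t\|u-a_i\|_\Xi+(1-t)\|v-a_i\|_\Xi\le\delta_i$. An arbitrary intersection of closed convex sets is closed and convex, so $\mathcal S$ is too. Under the hypothesis $f\in\mathcal S$ the set is nonempty. The Hilbert projection theorem then guarantees that $h$ exists and is unique. (Uniqueness is not essential, but it makes ``the'' projection well defined.)

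\textbf{Non-worsening.} The key tool is the obtuse-angle characterization: $h=P_{\mathcal S}g$ if and only if $\langle g-h,u-h\rangle_\Xi\le 0$ for all $u\in\mathcal S$. To justify it, expand $\|g-(h+t(u-h))\|_\Xi^2\ge\|g-h\|_\Xi^2$ for $t\in(0,1]$, which is allowed by convexity, divide by $t$, and let $t\to0$. Taking $u=f\in\mathcal S$ and writing $f-g=(f-h)+(h-g)$ gives
\[
\|f-g\|_\Xi^2=\|f-h\|_\Xi^2+\|h-g\|_\Xi^2+2\langle f-h,h-g\rangle_\Xi\ge\|f-h\|_\Xi^2+\|g-h\|_\Xi^2 ,
\]
because $\langle f-h,h-g\rangle_\Xi=-\langle g-h,f-h\rangle_\Xi\ge0$. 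This proves $\|f-h\|_\Xi\le\|f-g\|_\Xi$.

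\textbf{Strictness.} If $g\notin\mathcal S$, then $g\neq h$ because $h\in\mathcal S$, so $\|g-h\|_\Xi>0$. The displayed inequality then becomes strict, $\|f-g\|_\Xi^2>\|f-h\|_\Xi^2$, which yields the strict inequality. As a by-product this gives the quantitative Pythagorean bound $\|f-g\|_\Xi^2\ge\|f-h\|_\Xi^2+\Delta^2$ with $\Delta=\|g-h\|_\Xi$, which could be remarked on.

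\textbf{Main obstacle.} There is no real obstacle. The only step needing care is the derivation of the variational inequality and the remark that $\|\cdot\|_\Xi$ really is a norm on $\mathcal F$, rather than a seminorm. If it were only a seminorm, we would argue on the quotient by its kernel; the inequalities are unaffected because only $\Xi$-distances appear. Unlike Theorem~\ref{thm:gurantee1}, we do not attempt to use the two-dimensional reduction of Lemma~\ref{lemma:circle_point_circle_with_bounds}. The intersection is not a ball, so the convex-projection argument is the right tool here.
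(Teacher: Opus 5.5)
Your proposal is correct and follows essentially the same route as the paper: closedness and convexity of the intersection, existence of the projection, the obtuse-angle inequality $\langle g-h,f-h\rangle_\Xi\le 0$ applied with $u=f$, and the resulting bound $\|f-g\|_\Xi^2\ge\|f-h\|_\Xi^2+\|h-g\|_\Xi^2$. You also justify the variational inequality and make explicit why the inequality is strict, namely $\|g-h\|_\Xi>0$ because $h\in\mathcal S$ and $g\notin\mathcal S$; the paper leaves both points implicit.
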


\begin{proof}
Since each $\mathcal S_i$ is a closed convex subset of the Hilbert space $(F,\|\cdot\|_{\Xi})$, their intersection $\mathcal S$ is also closed and convex. The metric projection onto a closed convex set in a Hilbert space exists and is unique.

By definition,
\[
\|f-g\|_{\Xi}^2  = \|f-h+h-g\|_{\Xi}^2 = \|f-h\|_{\Xi}^2+ \|h-g\|_{\Xi}^2 +2 \langle f-h, h-g\rangle_{\Xi}
\]
and since $\langle g-h, f-h\rangle_{\Xi} \le 0$, as $f \in \mathcal S$, we get that $\langle f-h, h-g\rangle_{\Xi} \ge 0$ and so
\[
\|f-g\|_{\Xi}^2  \ge  \|f-h\|_{\Xi}^2+ \|h-g\|_{\Xi}^2  .
\]
Therefore
\[
\|f-h\|_{\Xi}
\le \|f-g\|_{\Xi}  ,
\]
which becomes strict if $g \notin \mathcal S$. This proves the claim. 
\end{proof}

\subsection{Extrapolation bounds and condition numbers}
\label{section:creating_anchor_functions}

The projection guarantees established in the previous subsection require that the true function $f$ lies within a certified feasible set. We now address how such certification can be obtained. Building on the extrapolation condition number introduced in \cite{hay2024function}, we develop quantitative bounds that relate in-domain error on $\Omega$ to extrapolation error on $\Xi$. 

These bounds provide explicit tolerance parameters for anchor functions and form the analytical foundation for the feasible regions used in the projection step.

\begin{definition}[The extrapolation condition number from \cite{hay2024function}]\label{def:classical_kappa}
Let the set $\{\varphi_k\}_{k=1}^d$ be an orthogonal basis on $\Omega$. Define
\[
M_\Xi := \max_{k=1,\dots,d} \|\varphi_k\|_\Xi^2,\qquad 
m_\Omega := \min_{k=1,\dots,d} \|\varphi_k\|_\Omega^2,
\]
and set
\[
\kappa := \frac{d\,M_\Xi}{m_\Omega}.
\]
\end{definition}

\begin{theorem}
    Let $\{\phi_k\}_{k=1}^d$ be a sequence of real-valued functions, defined both in $\Omega$ and $\Xi$. The following conditions hold:
    \begin{enumerate}
        \item If $\phi_k$ are orthogonal in $\Omega$, then $\mathcal{E}_{\Xi} \leq \sqrt{\kappa} \mathcal{E}_\Omega$.
        \item If $\phi_k$ are orthogonal both in $\Omega$ and $\Xi$, then $\mathcal{E}_{\Xi} \leq \sqrt{\frac{\kappa}{d}} \mathcal{E}_\Omega$.
    \end{enumerate}
    \label{theorem:all_kappa_calculation}
\end{theorem}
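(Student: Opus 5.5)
The plan is to work directly in coefficient space. By assumption $f\in\mathcal F$, and the approximation $g$ also lies in $\mathcal F=\spann{\phi_k}_{k=1}^d$. So I write
\[
e:=f-g=\sum_{k=1}^d c_k\phi_k ,
\]
where $c_k$ are the coefficient differences. Both claims then reduce to comparing $\|e\|_\Xi^2$ with $\|e\|_\Omega^2$ in terms of the vector $c=(c_1,\dots,c_d)$.

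First I would treat the $\Omega$-side, which is the same in both cases. Orthogonality in $\Omega$ kills the cross terms, giving
\[
\mathcal{E}_\Omega^2=\|e\|_\Omega^2=\sum_{k=1}^d c_k^2\|\phi_k\|_\Omega^2\ \ge\ m_\Omega\sum_{k=1}^d c_k^2 = m_\Omega\|c\|_2^2 .
\]
This converts control of the in-domain error into control of the Euclidean norm of the coefficient error.

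For claim (1), no orthogonality is available on $\Xi$, so I would use the triangle inequality followed by Cauchy--Schwarz in $\mathbb R^d$:
\[
\|e\|_\Xi\le\sum_{k=1}^d |c_k|\,\|\phi_k\|_\Xi\le \sqrt{M_\Xi}\sum_{k=1}^d|c_k|\le \sqrt{M_\Xi}\,\sqrt d\,\|c\|_2 .
\]
Squaring gives $\mathcal E_\Xi^2\le d\,M_\Xi\|c\|_2^2\le \frac{d M_\Xi}{m_\Omega}\mathcal E_\Omega^2=\kappa\,\mathcal E_\Omega^2$, and taking square roots yields (1). The factor $d$ comes exactly from the $\ell^1$-to-$\ell^2$ step, which matches the definition of $\kappa$ in Definition~\ref{def:classical_kappa}.

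For claim (2), orthogonality on $\Xi$ also removes the cross terms there, so
\[
\|e\|_\Xi^2=\sum_k c_k^2\|\phi_k\|_\Xi^2\le M_\Xi\|c\|_2^2\le \frac{M_\Xi}{m_\Omega}\mathcal E_\Omega^2=\frac{\kappa}{d}\mathcal E_\Omega^2 .
\]
Here the Cauchy--Schwarz loss of $d$ disappears. The only mildly delicate point is the bookkeeping: the norms here are induced by inner products on each domain, so the Pythagorean expansions are legitimate. The main potential obstacle is therefore just making sure $g$ is expanded in the same basis as $f$, since the coefficient-difference representation of $e$ depends on it.
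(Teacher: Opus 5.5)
Your proof is correct and is essentially the argument the paper relies on. The paper cites this result from earlier work without reproving it, but its proof of Theorem~\ref{theorem:inner_kappa_calculation} shows the same structure: expand $e=f-g$ in the basis, use Pythagoras on $\Omega$, and use the triangle inequality plus Cauchy--Schwarz on $\Xi$.

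The only nuance is in how the Cauchy--Schwarz step is done. The paper weights it by $\|\phi_k\|_\Omega$, which yields the intermediate constant $\sum_{k}\|\phi_k\|_\Xi^2/\|\phi_k\|_\Omega^2\le\kappa$ that it later invokes. You instead bound $\|\phi_k\|_\Xi^2\le M_\Xi$ and $\|\phi_k\|_\Omega^2\ge m_\Omega$ separately. This still gives the stated bound, and it also suffices for that later use: there the basis is orthonormal on $A$, so $\min_k\|\phi_k\|_A^2=1$ and your constant is exactly $d\hat M_\Xi$.
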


As illustrated in Figure~\ref{fig:bound_comparison}, the bound in Theorem~\ref{theorem:all_kappa_calculation} can be overly conservative, often producing very large extrapolation constants. 
To remedy this, we introduce an alternative extrapolation bound that is provably tighter under the same assumptions. 
Moreover, unlike the earlier estimate, the new bound is sharp.

\begin{definition}[Spectral extrapolation condition number]
    \label{def:spectral_kappa}
    Let $\{\tilde{\phi}_k\}_{k=1}^d$ be a sequence of real-valued functions
    defined on $\Omega$ and $\Xi$, and assume that
    $\{\tilde{\phi}_k\}_{k=1}^d$ are orthonormal on $\Omega$.
    Define the $\Xi$–Gram matrix
    \[
        \tilde G \;=\; \big[\langle \tilde{\phi}_i,\tilde{\phi}_j\rangle_{\Xi}\big]_{i,j=1}^d ,
    \]
    and set
    \[
        \kappa_{\mathrm{spec}} := \lambda_{\max}(\tilde G) ,
    \]
    where $\lambda_{\max}(\tilde G)$ is the largest eigenvalue of $\tilde G$.
    We call $\kappa_{\mathrm{spec}}$ the \emph{spectral extrapolation condition number}.
\end{definition}

\begin{theorem}[Spectral extrapolation bound]
    \label{theorem:all_kappa_calculation_improved}
    Let $\{\tilde{\phi}_k\}_{k=1}^d$ and $\kappa_{\mathrm{spec}}$ be as in
    Definition~\ref{def:spectral_kappa}. Then
    \[
        \mathcal{E}_{\Xi} \leq \sqrt{\kappa_{\mathrm{spec}}}\, \mathcal{E}_\Omega .
    \]
    Equivalently,
    \[
        \mathcal{E}_{\Xi} \leq \sqrt{\lambda_{\max}(\tilde G)}\, \mathcal{E}_\Omega .
    \]
    Moreover, this bound is tight.
\end{theorem}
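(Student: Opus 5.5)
The plan is to reduce both norms to quadratic forms in the coefficient vector of the error and then apply the Rayleigh--Ritz characterization of $\lambda_{\max}$. Since $f$ and $g$ both lie in $\mathcal{F}=\spann{\tilde\phi_1,\dots,\tilde\phi_d}$, I write the error as
\[
e := f-g=\sum_{k=1}^d c_k\tilde\phi_k,
\]
with coefficient vector $c=(c_1,\dots,c_d)^\top\in\mathbb{R}^d$. Any basis of $\mathcal{F}$ can be replaced by an $\Omega$-orthonormal one via Gram--Schmidt in $\langle\cdot,\cdot\rangle_\Omega$, provided $\|\cdot\|_\Omega$ is a norm on $\mathcal{F}$. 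This is implicit in the hypothesis that such a basis exists.

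First I will compute the two norms. By $\Omega$-orthonormality and bilinearity,
\[
\mathcal{E}_\Omega^2=\|e\|_\Omega^2=\sum_{i,j}c_ic_j\langle\tilde\phi_i,\tilde\phi_j\rangle_\Omega=c^\top c .
\]
In the same way, expanding in $\langle\cdot,\cdot\rangle_\Xi$ gives
\[
\mathcal{E}_\Xi^2=\|e\|_\Xi^2=c^\top\tilde G c .
\]
The matrix $\tilde G$ is a Gram matrix, so it is symmetric positive semidefinite and has real eigenvalues with $\lambda_{\max}(\tilde G)\ge 0$. The Rayleigh--Ritz theorem then gives $c^\top\tilde G c\le\lambda_{\max}(\tilde G)\,c^\top c$ for every $c$. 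Hence $\mathcal{E}_\Xi^2\le\kappa_{\mathrm{spec}}\mathcal{E}_\Omega^2$, and taking square roots yields the bound. The case $c=0$ is trivial.

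For tightness, I will exhibit an error that attains equality. Let $v$ be a unit eigenvector of $\tilde G$ for $\lambda_{\max}$. Set $e^\star=\sum_k v_k\tilde\phi_k\in\mathcal{F}$, for example by taking $f=g+e^\star$ for any $g\in\mathcal{F}$. Then $\mathcal{E}_\Omega^2=v^\top v=1$ and $\mathcal{E}_\Xi^2=v^\top\tilde G v=\lambda_{\max}$, so equality holds. Equivalently,
\[
\sup_{0\neq e\in\mathcal{F}}\frac{\|e\|_\Xi}{\|e\|_\Omega}=\sqrt{\kappa_{\mathrm{spec}}},
\]
and this supremum is attained, so no smaller constant works uniformly over $\mathcal{F}$.

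There is no real obstacle here. The only point requiring care is the standing assumption $f,g\in\mathcal{F}$, so that the error has an exact finite expansion in the $\tilde\phi_k$. The other is noting that $\tilde G$ is symmetric, which is what makes Rayleigh--Ritz applicable. I would also remark that the statement is basis-independent: another $\Omega$-orthonormal basis gives $\tilde G$ up to orthogonal similarity, which leaves $\lambda_{\max}$ unchanged.
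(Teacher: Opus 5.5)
Your proposal is correct and follows the paper's proof essentially step for step: expand $e=f-g$ in the $\Omega$-orthonormal basis so that $E_\Omega=\|c\|_2^2$ and $E_\Xi=c^\top\tilde G c$, then apply Rayleigh--Ritz and take square roots. Your tightness argument, which builds an error from a unit eigenvector $v$ for $\lambda_{\max}(\tilde G)$, is more complete than the paper's, since the paper only asserts tightness by appeal to the Rayleigh--Ritz principle.
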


\begin{proof}
    Given functions $f,g$ we define $e=f-g$. Expand $e$ in the $\Omega$–orthonormal basis: $e=\sum_{k=1}^d c_k\,\tilde\phi_k$ and let $c=(c_1,\ldots,c_d)^\top$.
By orthonormality on $\Omega$,
\[
E_{\Omega}=\|e\|_{\Omega}^2=\sum_{k=1}^d c_k^2=\|c\|_2^2.
\]

By definition of $\tilde G$,
\[
E_{\Xi}=\|e\|_{\Xi}^2
= \bigg\langle \sum_i c_i\tilde\phi_i,\sum_j c_j\tilde \phi_j \bigg\rangle_{\Xi}
= \sum_{i,j} c_i c_j \,\langle \tilde\phi_i,\tilde\phi_j\rangle_{\Xi}
= c^\top \tilde G c.
\]
Since $\tilde G$ is symmetric positive semidefinite, the Rayleigh–Ritz principle yields
\[
E_{\Xi}=c^\top \tilde G c \;\le\; \lambda_{\max}(\tilde G)\,\|c\|_2^2
= \lambda_{\max}(\tilde G)\,E_{\Omega}  .\]
Taking the square roots yields the required inequality,
$\mathcal{E}_\Xi \leq \sqrt{\lambda_{\max}(\tilde G)}\mathcal{E}_\Omega.$

Since the Rayleigh-Ritz principle states that the bound is tight, we get that the overall bound is tight as well.
\end{proof}

Since Theorem~\ref{theorem:all_kappa_calculation_improved} and Theorem~\ref{theorem:all_kappa_calculation} are proven on orthonormal and orthogonal bases, respectively, it is not immediately clear which is preferred. Therefore, we prove in Theorem~\ref{theorem:kappa_improved_always_better} that Theorem~\ref{theorem:all_kappa_calculation_improved} is always tighter than Theorem~\ref{theorem:all_kappa_calculation} and also show an empirical study in Figure~\ref{fig:bound_comparison}.

\begin{theorem}
\label{theorem:kappa_improved_always_better}
Let $\{{\phi}_k\}_{k=1}^d$ be a family of real-valued functions defined on both
$\Omega$ and $\Xi$ and orthogonal on $\Omega$, and let $\{\tilde{\phi}_k\}_{k=1}^d$ be the normalized
version such that $\{\tilde{\phi}_k\}_{k=1}^d$ is orthonormal on $\Omega$.
Define the $\Xi$–Gram matrix
\[
    \tilde G \;=\; \big[\langle \tilde{\phi}_i, \tilde{\phi}_j \rangle_{\Xi}\big]_{i,j=1}^d .
\]
Let $\kappa_{\mathrm{spec}}$ be the spectral extrapolation condition number
from Definition~\ref{def:spectral_kappa}, and let
$\kappa$ be the extrapolation condition number
from Definition~\ref{def:classical_kappa}, i.e.
$\kappa = d\, M_{\Xi} / m_{\Omega}$, where $M_{\Xi}$ and
$m_{\Omega}$ are defined using $\{{\phi}_k\}_{k=1}^d$.
Then
\[
    \kappa_{\mathrm{spec}} \;\leq\; \kappa.
\]
Equivalently,
\[
    \lambda_{\max}(\tilde G) \;\leq\; d \,\frac{M_{\Xi}}{m_\Omega},
\]
where $\lambda_{\max}(\tilde G)$ denotes the largest eigenvalue of $\tilde G$.
\end{theorem}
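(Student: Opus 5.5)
The plan is to bound $\lambda_{\max}(\tilde G)$ by the trace of $\tilde G$, and then bound each diagonal entry by $M_\Xi/m_\Omega$. Since $\tilde G$ is a Gram matrix, it is symmetric positive semidefinite. Its eigenvalues $\lambda_1\ge\dots\ge\lambda_d$ are therefore nonnegative, which gives
\[
\lambda_{\max}(\tilde G)\le \sum_{k=1}^d \lambda_k=\operatorname{tr}(\tilde G)=\sum_{k=1}^d \|\tilde\phi_k\|_\Xi^2 .
\]

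Next we express the diagonal entries through the original orthogonal basis. The normalization is $\tilde\phi_k=\phi_k/\|\phi_k\|_\Omega$, which is well defined because the $\phi_k$ form an orthogonal basis, so $\|\phi_k\|_\Omega>0$. Hence
\[
\|\tilde\phi_k\|_\Xi^2=\frac{\|\phi_k\|_\Xi^2}{\|\phi_k\|_\Omega^2}\le \frac{M_\Xi}{m_\Omega},
\]
since the numerator is at most $\max_k\|\phi_k\|_\Xi^2=M_\Xi$ and the denominator is at least $\min_k\|\phi_k\|_\Omega^2=m_\Omega$.

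Summing over $k=1,\dots,d$ gives $\operatorname{tr}(\tilde G)\le d\,M_\Xi/m_\Omega=\kappa$. Combined with the first step, this yields $\kappa_{\mathrm{spec}}=\lambda_{\max}(\tilde G)\le\kappa$.

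No step is a real obstacle. The only point needing care is that $\lambda_{\max}\le\operatorname{tr}$ uses positive semidefiniteness, so that no eigenvalue is negative. This follows from $c^\top\tilde G c=\|\sum_k c_k\tilde\phi_k\|_\Xi^2\ge 0$, as in the proof of Theorem~\ref{theorem:all_kappa_calculation_improved}. A slightly sharper intermediate bound, $\lambda_{\max}\le d\max_k \tilde G_{kk}$, also follows from Gershgorin's theorem together with Cauchy--Schwarz, $|\tilde G_{ij}|\le\sqrt{\tilde G_{ii}\tilde G_{jj}}$. The trace argument is simpler, so I would use it.
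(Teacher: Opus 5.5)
Your proof is correct, and it takes a different route from the paper.

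The paper bounds every entry, $|\tilde G_{ij}|\le M_\Xi/m_\Omega$. This uses Cauchy--Schwarz implicitly: $|\langle\phi_i,\phi_j\rangle_\Xi|\le\|\phi_i\|_\Xi\|\phi_j\|_\Xi\le M_\Xi$. It then deduces that all absolute row and column sums are at most $d\,M_\Xi/m_\Omega$. Finally it applies $\|\tilde G\|_2\le\sqrt{\|\tilde G\|_1\|\tilde G\|_\infty}$ and identifies $\|\tilde G\|_2$ with $\lambda_{\max}(\tilde G)$.

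You instead use only the diagonal. Positive semidefiniteness gives $\lambda_{\max}(\tilde G)\le\operatorname{tr}(\tilde G)$, and each diagonal entry is $\|\phi_k\|_\Xi^2/\|\phi_k\|_\Omega^2\le M_\Xi/m_\Omega$. What your route buys:
\begin{itemize}
\item It is shorter and never touches the off-diagonal entries.
\item It gives a sharper intermediate bound, $\lambda_{\max}(\tilde G)\le\sum_{k=1}^d\|\phi_k\|_\Xi^2/\|\phi_k\|_\Omega^2$. This is exactly the unsimplified form of the classical constant that the paper quotes in the proof of Theorem~\ref{theorem:inner_kappa_calculation}. So it shows $\kappa_{\mathrm{spec}}$ is dominated even before the crude max/min simplification.
\end{itemize}
What the paper's route buys is that its inequality direction does not need semidefiniteness, since for any symmetric matrix $\lambda_{\max}(\tilde G)\le\max_i|\lambda_i|\le\|\tilde G\|_\infty$. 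It is also the kind of estimate that works from entrywise control alone. For a Gram matrix, though, that generality gains nothing.

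One small correction to your closing aside. The Gershgorin bound $\lambda_{\max}(\tilde G)\le d\max_k\tilde G_{kk}$ is not sharper than your trace bound. Since $\operatorname{tr}(\tilde G)=\sum_k\tilde G_{kk}\le d\max_k\tilde G_{kk}$, the trace bound is always at least as good. The Gershgorin estimate is sharper only than the final constant $d\,M_\Xi/m_\Omega$. This does not affect your main argument.
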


\begin{proof}
Let $\{\phi_k\}_{k=1}^d$ be real-valued functions defined on both $\Omega$ and $\Xi$, and define
\[
\tilde{\phi}_k = \frac{\phi_k}{\|\phi_k\|_\Omega}, \qquad k=1,\ldots,d,
\]
so that $\{\tilde{\phi}_k\}_{k=1}^d$ form an orthonormal system on $\Omega$.  
Define the $\Xi$–Gram matrix
\[
\tilde{G} = \big[\langle \tilde{\phi}_i, \tilde{\phi}_j \rangle_{\Xi}\big]_{i,j=1}^d .
\]

Since $\|\phi_k\|_{\Xi}^2 \leq M_{\Xi}$ by definition, we obtain
\[
|\langle \phi_i, \phi_j \rangle_{\Xi}| \leq M_{\Xi}.
\]
Hence, for the normalized functions,
\[
|\tilde{G}_{ij}| = \frac{|\langle \phi_i, \phi_j \rangle_{\Xi}|}{\|\phi_i\|_\Omega \, \|\phi_j\|_\Omega}
\le \frac{M_{\Xi}}{\|\phi_i\|_\Omega \, \|\phi_j\|_\Omega}.
\]
Because $m_{\Omega} = \min_k \|\phi_k\|_{\Omega}^2$, it follows that $\|\phi_k\|_\Omega \geq \sqrt{m_\Omega}$ for all $k$, and thus
\[
|\tilde{G}_{ij}| \leq \frac{M_{\Xi}}{m_{\Omega}} \qquad \forall i,j.
\]

Every row and column of $\tilde{G}$ then satisfies
\[
\sum_{j=1}^d |\tilde{G}_{ij}| \leq d \frac{M_{\Xi}}{m_{\Omega}},
\]
so that the matrix $1$–norm and $\infty$–norm satisfy $\|\tilde{G}\|_{1} \le d\frac{M_{\Xi}}{m_{\Omega}}$ and $\|\tilde{G}\|_{\infty} \le d\frac{M_{\Xi}}{m_{\Omega}}$.  
By the standard matrix norm inequality~\citep[Theorem~5.6.9]{horn2012matrix},
\[
\|\tilde{G}\|_{2} \leq \sqrt{\|\tilde{G}\|_{1}\,\|\tilde{G}\|_{\infty}}.
\]
Since $\tilde{G}$ is symmetric positive semidefinite, $\|\tilde{G}\|_{2} = \lambda_{\max}(\tilde{G})$, and therefore
\[
\lambda_{\max}(\tilde{G}) \leq  d\,\frac{M_{\Xi}}{m_{\Omega}}.
\]

\end{proof}

Although Theorem~\ref{theorem:all_kappa_calculation_improved} is tighter than Theorem~\ref{theorem:all_kappa_calculation}, taking the eigenvalue of the Gram matrix and the Gram matrix itself can be numerically unstable. To address this, we propose an alternative extrapolation bound, using a new condition ratio $\kappa_r$. The resulting bound is usually tighter than the one provided in Theorem~\ref{theorem:all_kappa_calculation}, but it is only applicable in scenarios where a given condition is met. In the next section, we provide a numerical comparison between the bounds in Figure~\ref{fig:bound_comparison_with_inner}.

\begin{theorem} 
    Let $\{\hat{\phi}_i\}_{i=1}^{d}$ be a set of real-valued functions defined on both $\Omega$ and $\Xi$, with $\Omega \cup \Xi = A$. Assume the functions are orthonormal on $A$, and that 
    \begin{equation} \label{eqn:cond_new_kappa}
        \hat{M}_{\Xi}= \max_{k=1,\ldots, d}\norm{\phi_{k}}^2_{\Xi} \leq \frac{1}{d} .
    \end{equation}
    Then, 
    \[
    \mathcal{E}_\Xi \leq \sqrt{\kappa_{r}} \, \mathcal{E}_\Omega, \quad \kappa_{r} = \frac{d \hat{M}_\Xi}{1 - d \hat M_\Xi}.
    \]
    \label{theorem:inner_kappa_calculation}
\end{theorem}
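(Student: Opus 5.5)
The plan is to write the error $e=f-g=\sum_{k=1}^d c_k\hat\phi_k$ and use orthonormality on $A=\Omega\cup\Xi$. We assume $\Omega$ and $\Xi$ are disjoint, or at least that the inner products are additive, so that $\langle u,v\rangle_A=\langle u,v\rangle_\Omega+\langle u,v\rangle_\Xi$. Then
\[
\|c\|_2^2=\|e\|_A^2=\|e\|_\Omega^2+\|e\|_\Xi^2=E_\Omega+E_\Xi .
\]
The whole argument reduces to bounding $E_\Xi$ by a fixed fraction of $\|c\|_2^2$. Once we have
\[
E_\Xi\le \theta\,\|c\|_2^2 \quad\text{with } \theta<1,
\]
we get $E_\Xi\le\theta(E_\Omega+E_\Xi)$. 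Rearranging gives $E_\Xi\le \frac{\theta}{1-\theta}E_\Omega$, and taking square roots finishes.

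The key step is to show $\theta=d\hat M_\Xi$ works. We expand
\[
E_\Xi=\Big\|\sum_k c_k\hat\phi_k\Big\|_\Xi^2 .
\]
Applying the triangle inequality and then Cauchy--Schwarz in $\mathbb{R}^d$ gives
\[
\|e\|_\Xi\le\sum_k|c_k|\,\|\hat\phi_k\|_\Xi\le \sqrt{\hat M_\Xi}\sum_k|c_k|\le\sqrt{\hat M_\Xi}\,\sqrt d\,\|c\|_2 .
\]
Hence $E_\Xi\le d\hat M_\Xi\|c\|_2^2$. Alternatively, $E_\Xi=c^\top\hat G c$ with $\hat G$ the $\Xi$-Gram matrix of $\{\hat\phi_k\}$. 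Since $\hat G$ is positive semidefinite, $\lambda_{\max}(\hat G)\le\operatorname{tr}\hat G\le d\hat M_\Xi$, which gives the same conclusion. Here the quantity $\hat M_\Xi$ in \eqref{eqn:cond_new_kappa} is read as $\max_k\|\hat\phi_k\|_\Xi^2$; the unhatted $\phi_k$ appears to be a typo.

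The main obstacle is the strictness $\theta<1$ needed for the rearrangement. The hypothesis \eqref{eqn:cond_new_kappa} gives only $d\hat M_\Xi\le 1$. In the boundary case $d\hat M_\Xi=1$ we have $\kappa_r=\infty$, and the statement holds trivially under the convention $\sqrt{\infty}=\infty$, or vacuously. In the strict case we divide by $1-d\hat M_\Xi>0$.

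I would also remark that the trace route gives the sharper, still stable, intermediate bound
\[
\mathcal{E}_\Xi\le\sqrt{\frac{\lambda}{1-\lambda}}\,\mathcal{E}_\Omega,\qquad \lambda=\lambda_{\max}(\hat G).
\]
The point of the theorem, however, is that $\hat M_\Xi$ needs only the norms $\|\hat\phi_k\|_\Xi$ of basis functions orthonormal on the whole domain $A$, so no inversion or eigen-decomposition of an ill-conditioned Gram matrix is required.
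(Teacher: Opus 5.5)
Your proof is correct and follows the paper's route: both establish $E_\Xi \le d\hat M_\Xi\,E_A$, use the additivity $E_A = E_\Omega + E_\Xi$ (which you rightly state explicitly as an assumption), and rearrange, dividing by $1-d\hat M_\Xi$ only in the nontrivial case. The one difference is that you derive the intermediate inequality directly, via Cauchy--Schwarz or $\lambda_{\max}(\hat G)\le\operatorname{tr}\hat G$, whereas the paper obtains it by invoking Theorem~\ref{theorem:all_kappa_calculation} with $A$ in place of $\Omega$.
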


\begin{proof}
    Set $\phi_k := \hat{\phi}_k$ and note that by assumption the family $\{\phi_k\}_{k=1}^d$ is orthonormal on $A = \Omega \cup \Xi$. In particular, it is orthogonal on $A$.

    Theorem~\ref{theorem:all_kappa_calculation} does not impose any geometric or measure-theoretic relationship between its two domains (denoted there by $\Omega$ and $\Xi$); it only requires that the functions be orthogonal on the first domain. Therefore, we may apply it with
    \[
        \Omega := A, \qquad \Xi := \Xi,
    \]
    and the same system $\{\phi_k\}_{k=1}^d$.

    In the proof of Theorem~\ref{theorem:all_kappa_calculation} (case~1), the constant $\kappa$ is given by
    \[
        \kappa \;=\; \sum_{k=1}^d \frac{\|\phi_k\|_{\Xi}^2}{\|\phi_k\|_{A}^2},
    \]
    when $\{\phi_k\}$ is orthonormal on the first domain (here, $A$), i.e., $\|\phi_k\|_A^2 = 1$ for all $k$. Hence,
    \[
        \kappa = \sum_{k=1}^d \|\phi_k\|_{\Xi}^2 
        \;\leq\; d \max_{k} \|\phi_k\|_{\Xi}^2
        \;=\; d \hat{M}_\Xi.
    \]
    Applying part~(1) of Theorem~\ref{theorem:all_kappa_calculation} with $\Omega = A$ gives
    \[
        E_\Xi \leq \kappa E_A \leq d \hat{M}_\Xi E_A.
    \]
    Since $A = \Omega \cup \Xi$ and we measure the same squared error on each region, we have
    \[
        E_A = E_\Omega + E_\Xi.
    \]
    Substituting this into the previous inequality yields
    \[
        E_\Xi \leq d \hat{M}_\Xi (E_\Omega + E_\Xi).
    \]
    Rearranging,
    \[
        E_\Xi - d \hat{M}_\Xi E_\Xi \leq d \hat{M}_\Xi E_\Omega
        \quad\Longrightarrow\quad
        (1 - d \hat{M}_\Xi) E_\Xi \leq d \hat{M}_\Xi E_\Omega.
    \]
    By the assumption $\hat{M}_\Xi \leq 1/d$, we have $1 - d \hat{M}_\Xi \ge 0$; in the nontrivial case $1 - d \hat{M}_\Xi > 0$ we may divide both sides to obtain
    \[
        E_\Xi \leq \frac{d \hat{M}_\Xi}{1 - d \hat{M}_\Xi} E_\Omega \quad\Longrightarrow\quad \mathcal{E}_\Xi \leq \sqrt{\kappa_r} \mathcal{E}_\Omega.
    \]
\end{proof}

\begin{remark}
    The difference between $M_\Xi$ and $\tilde M_\Xi$ stems from the basis functions used in the calculation:
    \begin{enumerate}[label=(\roman*), leftmargin=*]        
    \item $\tilde M_\Xi$ is for $\Tilde{\phi}$ when they are orthonormal over $\Omega$.
    \item $\hat M_\Xi$ is for $\phi$ when they are orthonormal over $A$.
    \end{enumerate}

    Even if the basis is orthonormal on $\Omega$, the Gram matrix on $\Xi$ can be ill-conditioned or have a large top eigenvalue if the basis functions become highly correlated or large in magnitude on $\Xi$. This often results in a very large value, causing Theorem~\ref{theorem:all_kappa_calculation} to produce bounds that are too large to be useful.

    On the other hand, Theorem~\ref{theorem:inner_kappa_calculation} may not be applicable in many cases, but usually provides a tighter bound when it is valid. Note that an empirical comparison between the bounds from Theorem~\ref{theorem:all_kappa_calculation} and Theorem~\ref{theorem:inner_kappa_calculation} is presented in a later section on Figure~\ref{fig:bound_comparison_with_inner}.
    \label{remark:bound_comparison_with_inner}
\end{remark}

\subsection{Probabilistic modeling of extrapolation amplification}
\label{subsec:prob_anchor_sphere}

The extrapolation bounds in Theorems~\ref{theorem:all_kappa_calculation}, 
\ref{theorem:all_kappa_calculation_improved}, and~\ref{theorem:inner_kappa_calculation} 
are derived in a worst-case sense. While fully rigorous, these bounds are governed by extremal spectral behavior and may therefore be overly conservative in practice. As a consequence, the resulting feasible sets can be unnecessarily large, providing limited practical restrictions. Empirical comparisons later in the paper (see Figure~\ref{fig:bound_comparison}) illustrate this conservatism. This observation motivates a probabilistic refinement of the extrapolation constant.

To obtain bounds that better reflect typical behavior, we also consider a probabilistic model. Writing $f-g$ in an $\Omega$-orthonormal basis with coefficient vector $c$, we condition on its (observed or estimated) magnitude $\mathrm{m}=\|c\|_2$ and model only the normalized direction $c/\|c\|_2$ as random. For simplicity, we assume this direction is uniform on the sphere, i.e., $c$ is uniformly distributed over
\[
\{x\in\mathbb{R}^d:\|x\|_2=\|c\|_2\}.
\]
This isotropic assumption is a convenient baseline when no additional structure on the error direction is available.

Under the above modeling, we derive bounds that hold with a prescribed confidence level and are typically much smaller than their worst-case counterparts. The following lemma, corollary, and definitions establish these probabilistic bounds.

\begin{lemma}[Sphere quadratic form for a pure dominant-eigenvalue matrix]
\label{lem:rank_one_beta_sphere}
Let $d\ge 2$ and let $s\in\mathbb{R}^d$ be uniformly distributed on the unit sphere $S^{d-1}=\{x\in\mathbb{R}^d:\|x\|_2=1\}$. Let $u\in\mathbb{R}^d$ be a unit vector and set $G:=\lambda_{\max}uu^\top$ with $\lambda_{\max}>0$. Then
\[
\frac{s^\top G s}{\lambda_{\max}}=(u^\top s)^2
\;\overset{d}{=}\; s_1^2
\;\sim\; \mathrm{Beta}\!\left(\tfrac12,\tfrac{d-1}{2}\right),
\]
where $\overset{d}{=}$ denotes equality in distribution.
In particular, if $c:=\mathrm{m}s$ for some $\mathrm{m}>0$ (so $\|c\|_2=\mathrm{m}$), then
\[
c^\top G c \;\overset{d}{=}\; \lambda_{\max}\,\|c\|_2^2\,Z,
\qquad
Z\sim \mathrm{Beta}\!\left(\tfrac12,\tfrac{d-1}{2}\right).
\]
\end{lemma}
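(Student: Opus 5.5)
The plan is to reduce the quadratic form to a single squared coordinate, using rotational invariance of the uniform measure on the sphere, and then to identify that coordinate's law through the Gaussian representation of the uniform distribution. The first identity, $s^\top G s/\lambda_{\max}=(u^\top s)^2$, is immediate from $G=\lambda_{\max}uu^\top$: we have $s^\top G s=\lambda_{\max}(u^\top s)(u^\top s)$.

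For the equality in distribution, choose an orthogonal matrix $Q$ whose first row is $u^\top$, so that $Qu=e_1$. Then $u^\top s=e_1^\top Qs=(Qs)_1$. The uniform distribution on $S^{d-1}$ is invariant under orthogonal maps, so $Qs\overset{d}{=}s$. This gives $(u^\top s)^2\overset{d}{=}s_1^2$.

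To identify the law of $s_1^2$, I would use the standard representation $s\overset{d}{=}X/\|X\|_2$ with $X\sim N(0,I_d)$. Then
\[
s_1^2\overset{d}{=}\frac{X_1^2}{X_1^2+\sum_{k=2}^d X_k^2}.
\]
Here $X_1^2\sim\chi^2_1=\mathrm{Gamma}(\tfrac12,2)$ and $\sum_{k\ge2}X_k^2\sim\chi^2_{d-1}=\mathrm{Gamma}(\tfrac{d-1}{2},2)$, and the two are independent. The classical Gamma--Beta relation states that $A/(A+B)\sim\mathrm{Beta}(a,b)$ when $A\sim\mathrm{Gamma}(a,\theta)$ and $B\sim\mathrm{Gamma}(b,\theta)$ are independent. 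It yields $s_1^2\sim\mathrm{Beta}(\tfrac12,\tfrac{d-1}{2})$. The hypothesis $d\ge2$ ensures the second shape parameter is positive.

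For the scaled statement, write $c=\mathrm{m}s$ with the deterministic $\mathrm{m}=\|c\|_2$. Then $c^\top Gc=\mathrm{m}^2 s^\top Gs=\lambda_{\max}\|c\|_2^2(u^\top s)^2$, and the first part gives the distribution of the last factor.

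The only step needing care is justifying the Gaussian representation and the Gamma--Beta fact. I would cite both as standard; see \cite{johnson1995continuous}. If a self-contained argument were wanted, I would derive the Beta density of $s_1^2$ from the surface-measure marginal density of $s_1$, which is proportional to $(1-t^2)^{(d-3)/2}$ on $[-1,1]$, followed by the change of variables $z=t^2$.
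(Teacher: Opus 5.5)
Your proposal is correct and follows essentially the same route as the paper. You reduce to $s_1^2$ by rotational invariance; you make the rotation explicit via an orthogonal $Q$ with first row $u^\top$, where the paper simply assumes $u=e_1$. You then use the Gaussian representation $s\overset{d}{=}X/\|X\|_2$, the independent $\chi^2_1$/$\chi^2_{d-1}$ (Gamma--Beta) identity, and the same scaling step for $c=\mathrm{m}s$.
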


\begin{proof}
By rotational invariance of the uniform distribution on $S^{d-1}$ we may assume $u=e_1$,
so $s^\top G s=\lambda_{\max}s_1^2$.

Let $X=(X_1,\ldots,X_d)\sim\mathcal{N}(0,I_d)$. Since $X_1$ is independent of $(X_2,\ldots,X_d)$,
the random variables
\[
U:=X_1^2\sim\chi^2_1
\quad\text{and}\quad
V:=\sum_{i=2}^d X_i^2\sim\chi^2_{d-1}
\]
are independent, and hence the classical identity
\[
\frac{U}{U+V}\sim \mathrm{Beta}\!\left(\tfrac12,\tfrac{d-1}{2}\right)
\]
applies \cite{johnson1995continuous}.

Next, by the standard normalization construction of a uniform random direction,
\[
s \;\overset{d}{=}\; \frac{X}{\|X\|_2},
\]
see, e.g., \cite{mardia2000directional}. Therefore
\[
s_1^2=\frac{X_1^2}{\sum_{i=1}^d X_i^2}=\frac{U}{U+V}
\sim \mathrm{Beta}\!\left(\tfrac12,\tfrac{d-1}{2}\right),
\]
which proves the first claim. For $c=\mathrm{m}s$ we have
$c^\top G c=\mathrm{m}^2 s^\top G s$ and $\|c\|_2=\mathrm{m}$, yielding the scaling statement.
\end{proof}

\begin{corollary}[dominant eigenvalue approximation on the sphere]
\label{cor:almost_rank_one_beta_sphere}
Let $G$ be positive semidefinite with eigenvalues
$\lambda_1=\lambda_{\max}\ge \lambda_2\ge\cdots\ge \lambda_d\ge 0$ and spectral decomposition
$G=U\Lambda U^\top$.
Let $\mathcal{S}$ be uniform on $S^{d-1}$ and set $c=\mathrm{m}s$ with $\mathrm{m}>0$.
Then
\[
c^\top G c
= \mathrm{m}^2 \sum_{i=1}^d \lambda_i\,Y_i^2,
\qquad
Y:=U^\top s \ \text{(hence also uniform on $S^{d-1}$)},
\]
and consequently $Y_1^2\sim \mathrm{Beta}\!\left(\tfrac12,\tfrac{d-1}{2}\right)$.
Moreover,
\[
\mathrm{m}^2\lambda_1 Y_1^2
\;\le\;
c^\top G c
\;\le\;
\mathrm{m}^2\!\left(\lambda_1 Y_1^2 + \lambda_2(1-Y_1^2)\right).
\]
In particular, when $\lambda_2\ll \lambda_1$ (strong spectral gap),
the distribution of $c^\top G c$ is well-approximated by
\[
c^\top G c \;\approx\; \lambda_{\max}\,\|c\|_2^2\,Z,
\qquad
Z\sim \mathrm{Beta}\!\left(\tfrac12,\tfrac{d-1}{2}\right).
\]
\end{corollary}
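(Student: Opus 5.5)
The proof rotates into the eigenbasis of $G$, identifies $Y_1^2$ through Lemma~\ref{lem:rank_one_beta_sphere}, and then bounds the quadratic form by splitting off the top eigenvalue.

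\emph{Step 1: rotate into the eigenbasis.} Write $G=U\Lambda U^\top$ with $U$ orthogonal and $\Lambda=\mathrm{diag}(\lambda_1,\ldots,\lambda_d)$. Substituting $c=\mathrm{m}s$ gives
\[
c^\top G c=\mathrm{m}^2 (U^\top s)^\top\Lambda(U^\top s)=\mathrm{m}^2\sum_{i=1}^d\lambda_i Y_i^2,
\qquad Y:=U^\top s .
\]

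\emph{Step 2: distribution of $Y$ and $Y_1^2$.} Since $U^\top$ is orthogonal and the uniform measure on $S^{d-1}$ is rotation invariant, $Y$ is again uniform on $S^{d-1}$. One can see this via the Gaussian representation $s\overset{d}{=}X/\|X\|_2$ already used in Lemma~\ref{lem:rank_one_beta_sphere}: $U^\top X\sim\mathcal N(0,I_d)$ and $\|U^\top X\|_2=\|X\|_2$. Then $Y_1=u_1^\top s$, where $u_1$ is the top eigenvector, and Lemma~\ref{lem:rank_one_beta_sphere} with $u=u_1$ gives $Y_1^2\sim\mathrm{Beta}(\tfrac12,\tfrac{d-1}{2})$.

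\emph{Step 3: the sandwich inequality.} This is elementary once we use $\lambda_i\ge0$ and $\sum_i Y_i^2=1$.
\begin{itemize}
\item \textbf{Lower bound.} Drop the nonnegative terms $\lambda_iY_i^2$ for $i\ge2$ to get $c^\top Gc\ge\mathrm{m}^2\lambda_1Y_1^2$.
\item \textbf{Upper bound.} Use $\lambda_i\le\lambda_2$ for $i\ge2$ to get $\sum_{i\ge2}\lambda_iY_i^2\le\lambda_2\sum_{i\ge2}Y_i^2=\lambda_2(1-Y_1^2)$.
\end{itemize}

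\emph{Step 4: the approximation statement.} Divide the sandwich by $\mathrm{m}^2\lambda_1=\lambda_{\max}\|c\|_2^2$. The normalized quantity then lies between $Z:=Y_1^2$ and $Z+(\lambda_2/\lambda_1)(1-Z)$. The gap between these is at most $\lambda_2/\lambda_1$ deterministically, so it vanishes as $\lambda_2/\lambda_1\to0$.

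As a consequence, the $\rho$-quantile of $c^\top Gc/(\lambda_{\max}\|c\|_2^2)$ is bracketed by $q_\rho$ and $q_\rho+(\lambda_2/\lambda_1)(1-q_\rho)$, where $q_\rho$ is the Beta quantile. This holds because the map $z\mapsto z+r(1-z)$ is monotone. This bracketing is how I will make the "$\approx$" precise.

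\emph{Expected difficulty.} The only delicate point is interpreting the informal "well-approximated" claim. I would state it rigorously through this explicit deterministic error bound of size $\lambda_2/\lambda_1$ and the resulting quantile bracketing, rather than through any limiting distributional statement. The remaining steps are routine.
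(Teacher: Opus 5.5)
Your proof is correct and follows the argument the paper intends. The paper states this corollary without a separate proof, as a direct consequence of Lemma~\ref{lem:rank_one_beta_sphere} via rotation invariance (the same reduction to $U=I$ used in the proof of Proposition~\ref{prop:low_rank}) together with the elementary bounds $\lambda_i\ge 0$ and $\lambda_i\le\lambda_2$ for $i\ge 2$. Your deterministic gap $(\lambda_2/\lambda_1)(1-Z)\le\lambda_2/\lambda_1$ and the resulting quantile bracketing go slightly beyond the paper by making the informal ``$\approx$'' precise. Note only that the normalization needs $\lambda_1>0$ (the case $G=0$ is trivial) and, as in the lemma, $d\ge 2$.
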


\begin{remark}[Consistency with the mean-eigenvalue identity]
\label{rem:mean_eigenvalue_consistency}
A classical isotropy property of the uniform distribution on the sphere states that if
$s\sim \mathrm{Unif}(S^{d-1})$, then
\begin{equation}
\label{eq:isotropy_sphere}
\mathbb{E}[ss^\top] \;=\; \frac{1}{d}I_d ,
\end{equation}
see, e.g., \cite{mardia2000directional}. Consequently, for any symmetric positive semidefinite
matrix $G$ with eigenvalues $\lambda_1,\ldots,\lambda_d$,
\begin{equation}
\label{eq:quadratic_form_mean_trace}
\mathbb{E}\!\left[s^\top G s\right]
\;=\;
\mathrm{tr}\!\bigl(G\,\mathbb{E}[ss^\top]\bigr)
\;=\;
\frac{1}{d}\mathrm{tr}(G)
\;=\;
\frac{1}{d}\sum_{i=1}^d \lambda_i .
\end{equation}
If $c=\mathrm{m}\,s$ (so $\|c\|_2=\mathrm{m}$), then
\begin{equation}
\label{eq:quadratic_form_mean_trace_scaled}
\mathbb{E}\!\left[c^\top G c\right]
\;=\;
\|c\|_2^2\,\frac{\mathrm{tr}(G)}{d}
\;=\;
\mathrm{m}^2\,\frac{1}{d}\sum_{i=1}^d \lambda_i .
\end{equation}

In the dominant eigenvalue setting of Lemma~\ref{lem:rank_one_beta_sphere},
$c^\top G c \overset{d}{=} \lambda_{\max}\|c\|_2^2 Z$ with
$Z\sim\mathrm{Beta}(\tfrac12,\tfrac{d-1}{2})$.
Since $\mathbb{E}[Z]=\frac{a}{a+b}=\frac{1/2}{1/2+(d-1)/2}=\frac{1}{d}$, we obtain
\[
\mathbb{E}[c^\top G c]
=
\lambda_{\max}\|c\|_2^2\,\mathbb{E}[Z]
=
\lambda_{\max}\|c\|_2^2\,\frac{1}{d},
\]
which agrees with~\eqref{eq:quadratic_form_mean_trace_scaled} because
$\mathrm{tr}(G)\approx\lambda_{\max}$ when $G$ has a dominant eigenvalue.
\end{remark}

The preceding analysis characterizes the distribution of the
extrapolation amplification ratio under the isotropic direction
model. In particular, it identifies the Rayleigh quotient
$s^\top \tilde G s$ as the fundamental random quantity governing
out-of-domain error growth. We now leverage this characterization
to construct high-confidence extrapolation bounds based on
quantiles of this distribution.

\subsection{High-confidence probabilistic extrapolation bound}

To obtain high-confidence bounds, we introduce a probabilistic model
for the coefficient error. Writing $f-g$ in an $\Omega$--orthonormal
basis with coefficient vector $c$, we condition on its magnitude
$m=\|c\|_2$ and model only its normalized direction $c/\|c\|_2$.
We assume this direction is uniformly distributed on the unit sphere
$S^{d-1}\subset\mathbb{R}^d$.

\begin{definition}[Probabilistic spectral extrapolation condition number]
\label{def:prob_spectral_kappa_sphere}
Let $G$ be the Gram matrix from Definition~\ref{def:spectral_kappa}.
Let $\mathcal{S}$ be uniformly distributed on the unit sphere $S^{d-1}\subset\mathbb{R}^d$ and define
\[
\kappa_{\mathrm{spec}}^{\mathrm{prob}} := s^\top G s.
\]
Equivalently, for any $\mathrm{m}>0$ and $c=\mathrm{m}s$ (so $\|c\|_2=\mathrm{m}$),
\[
\kappa_{\mathrm{spec}}^{\mathrm{prob}}=\frac{c^\top G c}{\|c\|_2^2}.
\]
In the dominant eigenvalue case $G=\lambda_{\max}uu^\top$ with $\|u\|_2=1$,
Lemma~\ref{lem:rank_one_beta_sphere} implies
\[
\kappa_{\mathrm{spec}}^{\mathrm{prob}}
\sim \lambda_{\max} Z,
\qquad
Z\sim \mathrm{Beta}\!\left(\tfrac12,\tfrac{d-1}{2}\right),
\]
and when $\lambda_{\max}\gg\lambda_2,\dots,\lambda_d$ this gives a good approximation.
\end{definition}

\begin{definition}[Quantile of the probabilistic condition number]
\label{def:prob_anchor_function_sphere}
For $\rho\in(0,1)$, define
\[
\kappa^{\rho}_{\mathrm{spec}}
:=
\inf\Big\{
t\in\mathbb{R}
:
\mathbb{P}\big(
\kappa^{\mathrm{prob}}_{\mathrm{spec}}
\le t
\big)
\ge \rho
\Big\}.
\]
\end{definition}

To observe the following bound, we write the coefficient error $e=f-g$ in the
$\Omega$--orthonormal basis as,
\[
e = \sum_{k=1}^d c_k \tilde\phi_k,
\qquad c = (c_1,\dots,c_d)^\top.
\]
We then have,
\[
E_\Omega(f,g) = \|e\|_\Omega^2 = \|c\|_2^2,
\qquad
E_\Xi(f,g) = \|e\|_\Xi^2 = c^\top \tilde G c.
\]
If we write $c = \|c\|_2 s$ with
$s = c/\|c\|_2 \in S^{d-1}$, then
\[
\frac{E_\Xi(f,g)}{E_\Omega(f,g)}
=
\frac{c^\top \tilde G c}{\|c\|_2^2}
=
s^\top \tilde G s.
\]
Under the isotropic direction model, this ratio is precisely the random variable $\kappa^{\mathrm{prob}}_{\mathrm{spec}}$. Therefore, we arrive at:

\begin{proposition}[Probabilistic spectral extrapolation bound]
\label{prop:prob_spec_bound}
Fix $\rho\in(0,1)$ and let
$\kappa^{\rho}_{\mathrm{spec}}$
be the $\rho$--upper quantile of
$\kappa^{\mathrm{prob}}_{\mathrm{spec}}$.
Then for any $g\in F$,
\begin{equation} \label{eq:prob_bound_root_sphere} 
\mathbb{P}\!\left(
E_\Xi(f,g)
\le
\sqrt{\kappa^{\rho}_{\mathrm{spec}}}\,
E_\Omega(f,g)
\right)
\ge \rho.
\end{equation}
\end{proposition}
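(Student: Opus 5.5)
The plan is to reduce the statement to the defining property of the quantile in Definition~\ref{def:prob_anchor_function_sphere}, using the algebraic identity derived just before the proposition. First I will fix the reading of \eqref{eq:prob_bound_root_sphere}. The natural conclusion of that identity is the squared-error bound $E_\Xi(f,g)\le \kappa^{\rho}_{\mathrm{spec}}\,E_\Omega(f,g)$. Taking square roots, this is the root-error bound $\mathcal{E}_\Xi(g)\le \sqrt{\kappa^{\rho}_{\mathrm{spec}}}\,\mathcal{E}_\Omega(g)$, in exact parallel with Theorem~\ref{theorem:all_kappa_calculation_improved}. I will prove it in that form. Read literally with squared errors on both sides, the inequality with $\sqrt{\kappa^{\rho}_{\mathrm{spec}}}$ is strictly stronger whenever $\kappa^{\rho}_{\mathrm{spec}}>1$, and it does not follow from the model. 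So I will treat the $\sqrt{\cdot}$ as belonging to the root errors $\mathcal{E}_\Xi,\mathcal{E}_\Omega$.

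\emph{Step 1 (degenerate case).} If $E_\Omega(f,g)=0$, then the coefficient vector is $c=0$, since $\{\tilde\phi_k\}$ is $\Omega$-orthonormal and $E_\Omega=\|c\|_2^2$. Hence $e=f-g=0$ in $\mathcal{F}$, so $E_\Xi=c^\top\tilde G c=0$ and the event holds surely.

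\emph{Step 2 (reduction to the Rayleigh quotient).} Otherwise, write $c=\|c\|_2\,s$ with $s\in S^{d-1}$. By the computation preceding the proposition,
\[
E_\Xi(f,g)=c^\top\tilde G c=\|c\|_2^2\, s^\top\tilde G s = E_\Omega(f,g)\, s^\top\tilde G s .
\]
The event $\{E_\Xi\le \kappa^{\rho}_{\mathrm{spec}}E_\Omega\}$ therefore coincides exactly with the event $\{s^\top\tilde G s\le \kappa^{\rho}_{\mathrm{spec}}\}$. Under the isotropic direction model, conditionally on the magnitude $\mathrm{m}=\|c\|_2$, the direction $s$ is uniform on $S^{d-1}$. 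Hence $s^\top\tilde G s$ has precisely the law of $\kappa^{\mathrm{prob}}_{\mathrm{spec}}$ from Definition~\ref{def:prob_spectral_kappa_sphere}, independently of $\mathrm{m}$.

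\emph{Step 3 (quantile property).} It remains to show $\mathbb{P}(\kappa^{\mathrm{prob}}_{\mathrm{spec}}\le \kappa^{\rho}_{\mathrm{spec}})\ge\rho$. Let $F(t)=\mathbb{P}(\kappa^{\mathrm{prob}}_{\mathrm{spec}}\le t)$. The variable is supported in $[\lambda_{\min}(\tilde G),\lambda_{\max}(\tilde G)]$, so the set $\{t:F(t)\ge\rho\}$ is nonempty and bounded below. It follows that $\kappa^{\rho}_{\mathrm{spec}}$ is finite and satisfies $\kappa^{\rho}_{\mathrm{spec}}\le\lambda_{\max}(\tilde G)=\kappa_{\mathrm{spec}}$. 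Now take a sequence $t_n\downarrow\kappa^{\rho}_{\mathrm{spec}}$ with $F(t_n)\ge\rho$. Right-continuity of the distribution function gives $F(\kappa^{\rho}_{\mathrm{spec}})\ge\rho$. Combining this with Step 2 and averaging over $\mathrm{m}$ (or simply conditioning on it) yields the claim. Finally, take square roots inside the event.

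The only delicate point is the interpretation in the first paragraph, namely the placement of the square root. The probabilistic content itself is just the exact identification of the error ratio with $s^\top\tilde G s$ together with right-continuity of the CDF. No Beta approximation from Lemma~\ref{lem:rank_one_beta_sphere} is needed here; that lemma enters only when one wants explicit values of $\kappa^{\rho}_{\mathrm{spec}}$.
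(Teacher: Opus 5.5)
Your proposal is correct and follows the paper's argument. The paper's proof likewise combines the identity $E_\Xi(f,g)=\kappa^{\mathrm{prob}}_{\mathrm{spec}}\,E_\Omega(f,g)$ with the defining property of the quantile; your degenerate case and right-continuity step make explicit what it leaves implicit.

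Your handling of the square root is also right. That identity yields $E_\Xi\le\kappa^{\rho}_{\mathrm{spec}}E_\Omega$, i.e.\ $\mathcal{E}_\Xi\le\sqrt{\kappa^{\rho}_{\mathrm{spec}}}\,\mathcal{E}_\Omega$. Read literally with squared errors on both sides, the displayed inequality is actually false whenever $\kappa^{\rho}_{\mathrm{spec}}>1$ and $E_\Omega>0$, because $\mathbb{P}(\kappa^{\mathrm{prob}}_{\mathrm{spec}}\le t)<\rho$ for every $t<\kappa^{\rho}_{\mathrm{spec}}$.
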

\begin{proof}
The claim follows directly from the definition of the
$\rho$--upper quantile of
$\kappa^{\mathrm{prob}}_{\mathrm{spec}}$
and the identity
$E_\Xi(f,g)
=
\kappa^{\mathrm{prob}}_{\mathrm{spec}} E_\Omega(f,g)$.
\end{proof}


\begin{remark}[Why we do not rely on generic sphere concentration]
\label{rem:concentration_vs_beta}
Let $s\sim\mathrm{Unif}(S^{d-1})$ and let $G\succeq 0$ with
$\lambda_{\max}=\|G\|_{\mathrm{op}}$.
Generic concentration on the sphere (e.g.\ L\'evy's lemma; see \cite[Thm.~5.1.3]{vershyninHDP})
applied to the quadratic form $f(s):=s^\top G s$ yields the quantile bound
\begin{equation}
\label{eq:generic_concentration_quantile_trace_main}
Q_{1-\delta}\!\bigl(s^\top G s\bigr)
\;\le\;
\frac{\mathrm{tr}(G)}{d}
+
C\,\lambda_{\max}\sqrt{\frac{\log(2/\delta)}{d}}
\end{equation}
for an absolute constant $C>0$; see Appendix~\ref{app:derivation_generic_sphere_quantile}.

In contrast, in the dominant eigenvalue case $G=\lambda_{\max}uu^\top$,
Lemma~\ref{lem:rank_one_beta_sphere} gives the \emph{exact} distribution
$s^\top G s=\lambda_{\max}Z_d$ with $Z_d\sim\mathrm{Beta}(\tfrac12,\tfrac{d-1}{2})$.
For any fixed confidence level $\rho$ (e.g.\ $0.95$), one has
$Q_{\rho}(Z_d)=\Theta(1/d)$, hence
$Q_{\rho}(s^\top G s)=\Theta(\lambda_{\max}/d)$.
By comparison, the dominant term in~\eqref{eq:generic_concentration_quantile_trace_main}
is $\Theta(\lambda_{\max}/\sqrt d)$ for fixed $\rho$, i.e.\ looser by a factor
$\Theta(\sqrt d)$.
This is why we prefer the distributional (dominant eigenvalue) model when deriving a tight probabilistic extrapolation condition number.
\end{remark}

The probabilistic analysis above was presented in the single dominant-eigenvalue regime primarily for clarity. We now state the natural extension to the case where extrapolation instability is governed by an $r$-dimensional leading eigenspace.

\begin{proposition}[Low-rank probabilistic amplification model]
\label{prop:low_rank}
Let $G \succeq 0$ be the Gram matrix from Definition~\ref{def:spectral_kappa} that admits spectral decomposition of the form,
\[
G = U \Lambda U^\top,
\qquad
\Lambda = \operatorname{diag}(\lambda_1,\dots,\lambda_d),
\]
with eigenvalues ordered as $\lambda_1 \ge \cdots \ge \lambda_d \ge 0$.
Assume that the leading $r$ eigenvalues dominate in the sense that,
\[
\lambda_1 \ge \cdots \ge \lambda_r \gg \lambda_{r+1} \ge \cdots \ge \lambda_d.
\]
Let $s \sim \mathrm{Unif}(S^{d-1})$ and recall the Rayleigh quotient
\[
\kappa^{\mathrm{prob}}_{\mathrm{spec}} := s^\top G s.
\]
Then,
\[
s^\top G s
= \sum_{i=1}^d \lambda_i Y_i^2,
\qquad
Y = U^\top s \sim \mathrm{Unif}(S^{d-1}),
\]
and the total energy in the dominant subspace is:
\[
\|P_{U_r} s\|_2^2
= \sum_{i=1}^r Y_i^2
\sim \mathrm{Beta}\!\left(\frac{r}{2}, \frac{d-r}{2}\right),
\]
where $U_r = \operatorname{span}\{u_1,\dots,u_r\}$. In particular, when $\lambda_{r+1},\dots,\lambda_d$ are negligible relative to
$\lambda_1,\dots,\lambda_r$, that is $\sum_{i=r+1}^d \lambda_i  Y_i^2 \ll \sum_{i=1}^r \lambda_i Y_i^2$ with high probability, we obtain the approximation:
\[
\kappa^{\mathrm{prob}}_{\mathrm{spec}}
\approx
\sum_{i=1}^r \lambda_i Y_i^2.
\]
\end{proposition}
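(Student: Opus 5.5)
The argument mirrors the proof of Lemma~\ref{lem:rank_one_beta_sphere} and has three steps: diagonalize the quadratic form, identify the law of the rotated direction, and compute the energy in the top $r$ coordinates through a Gaussian representation.

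\emph{Diagonalization and invariance.} Substituting $G=U\Lambda U^\top$ gives $s^\top G s=(U^\top s)^\top \Lambda (U^\top s)=\sum_{i=1}^d \lambda_i Y_i^2$ with $Y=U^\top s$. Since $U$ is orthogonal, the uniform measure on $S^{d-1}$ is invariant under $U^\top$, so $Y\sim\mathrm{Unif}(S^{d-1})$. This is exactly the step already used in Corollary~\ref{cor:almost_rank_one_beta_sphere}.

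\emph{Energy in the dominant subspace.} Because the columns $u_1,\dots,u_r$ are orthonormal, $P_{U_r}=\sum_{i\le r}u_iu_i^\top$, and hence $\|P_{U_r}s\|_2^2=\sum_{i\le r}(u_i^\top s)^2=\sum_{i\le r}Y_i^2$. For the distribution, represent $Y\overset{d}{=}X/\|X\|_2$ with $X\sim\mathcal N(0,I_d)$, as in the proof of Lemma~\ref{lem:rank_one_beta_sphere}. Set $U'=\sum_{i\le r}X_i^2\sim\chi^2_r$ and $V'=\sum_{i>r}X_i^2\sim\chi^2_{d-r}$; these are independent because they depend on disjoint sets of independent coordinates. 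Then $\sum_{i\le r}Y_i^2=U'/(U'+V')$, and the classical chi-square ratio identity \cite{johnson1995continuous} gives $U'/(U'+V')\sim\mathrm{Beta}(r/2,(d-r)/2)$. Two edge cases need a remark:
\begin{itemize}
\item we need $1\le r<d$ for the Beta law to be nondegenerate;
\item if $r=d$, the energy is identically $1$.
\end{itemize}

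\emph{The approximation.} This is not a rigorous statement but follows directly from the stated hypothesis. I would make it quantitative with the sandwich
\[
\sum_{i\le r}\lambda_iY_i^2\;\le\; s^\top Gs\;\le\;\sum_{i\le r}\lambda_iY_i^2+\lambda_{r+1}\Bigl(1-\sum_{i\le r}Y_i^2\Bigr),
\]
which holds because $\lambda_i\le\lambda_{r+1}$ for $i>r$ and $\sum_{i>r}Y_i^2=1-\sum_{i\le r}Y_i^2$. The additive error is therefore at most $\lambda_{r+1}$. Under the assumed negligibility of the tail, $\kappa^{\mathrm{prob}}_{\mathrm{spec}}\approx\sum_{i\le r}\lambda_iY_i^2$.

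\emph{Main obstacle.} The only step requiring care is the Beta identification, specifically justifying the independence of $U'$ and $V'$ and invoking the Gaussian normalization representation of the uniform direction. The rest of the argument is linear algebra.
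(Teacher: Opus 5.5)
Your proposal is correct and follows essentially the same route as the paper: rotate via the orthogonal invariance $Y=U^\top s$ (the paper simply takes $U=I$ without loss of generality), represent the uniform direction as $X/\|X\|_2$ with $X\sim\mathcal N(0,I_d)$, and apply the independent chi-square ratio identity to get $\mathrm{Beta}(r/2,(d-r)/2)$. Your sandwich bound with additive error at most $\lambda_{r+1}$ and your remark on the degenerate case $r=d$ are small, correct refinements of the paper's informal last step, which only splits $\kappa^{\mathrm{prob}}_{\mathrm{spec}}$ into the top-$r$ sum and the tail sum.
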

\begin{proof}
By rotational invariance of the uniform distribution on $S^{d-1}$, we may assume that $U = I$ without loss of generality. Since a random variable which is uniformly distributed on a unit sphere is equivalent to a normalized vector of independent standard normal, we let $X \sim \mathcal{N}(0,I_d)$ and write,
\[
s = \frac{X}{\|X\|_2}.
\]
Combining the above leads to,
\[
\sum_{i=1}^r Y_i^2 = \sum_{i=1}^r s_i^2 = \frac{\sum_{i=1}^r X_i^2}{\sum_{i=1}^d X_i^2}.
\]
Since $\sum_{i=1}^r X_i^2 \sim \chi^2_r$ and
$\sum_{i=r+1}^d X_i^2 \sim \chi^2_{d-r}$ are independent,
the classical identity,
\[
\frac{U}{U+V}
\sim \mathrm{Beta}\!\left(\frac{r}{2},\frac{d-r}{2}\right) ,
\]
for independent $U \sim \chi^2_r$ and $V \sim \chi^2_{d-r}$
yields the main claim. The last conclusion is naturally derived as $\kappa^{\mathrm{prob}}_{\mathrm{spec}} = \sum_{i=1}^r \lambda_i Y_i^2 + \sum_{i=r+1}^d \lambda_i Y_i^2$.
\end{proof}

Proposition~\ref{prop:low_rank} shows that extrapolation instability is governed by the effective dimension of the unstable eigenspace rather than merely by the largest eigenvalue. 

In the rank-one case ($r=1$), extrapolation instability is concentrated along a single direction, and amplification depends on how strongly the coefficient error aligns with that eigenvector. In the general low-rank case, instability is governed by an $r$-dimensional unstable subspace. The random amplification factor is determined by the squared projection of the normalized error direction onto this subspace. The Beta distribution above, therefore, quantifies how much of a random error vector typically falls into the unstable region of coefficient space.

To summarize, when the leading spectrum is effectively $r$-dimensional, $\kappa^\rho_{\mathrm{spec}}$ is governed by the quantiles of the projected energy as $\mathrm{Beta}(r/2,(d-r)/2)$, scaled by the leading eigenvalues $\lambda_1,\ldots,\lambda_r$ (it scales like pure Beta only when $\lambda_1=\lambda_2=\cdots=\lambda_r$). Consequently, we expect the probabilistic anchor radii to remain smaller than the worst-case spectral constant $\lambda_{\max}(G)$ whenever $r \ll d$, while the advantage diminishes as $r$ approaches $d$.


\subsection{Constructing anchor functions}

The extrapolation bounds derived above provide certified tolerance parameters for anchor functions in terms of the true in-domain error $E_\Omega$. In practice, $E_\Omega$ is replaced by its empirical estimate $\widetilde E_\Omega$. Under standard noise assumptions, $\widetilde E_\Omega$ serves as a reliable upper approximation, yielding a certified feasible ball of radius $(\kappa \widetilde E_\Omega)^\frac{1}{2}$.

To reduce the size of the minimal feasible set, we propose a sampling-based strategy that generates a set of informative anchor functions. These anchor functions serve as local approximations that restrict the feasible set while preserving the underlying function's coverage. The core idea is that by optimizing over randomly chosen subsets of basis functions, we can construct diverse anchor functions whose extrapolation regions collectively constrain the search domain. 

The process begins by randomly selecting a subset of basis functions from the full set $\{\phi_k\}_{k=1}^d$. For each subset, we optimize the associated coefficients to construct an anchor function that fits the observed data in $\Omega$; any algorithm can be used, such as LASSO (Least Absolute Shrinkage and Selection Operator), LS (Least Squares), or any preferred algorithm. We then use extrapolation bounds (Theorems~\ref{theorem:all_kappa_calculation} and~\ref{theorem:inner_kappa_calculation}) to determine the validity region of each anchor function, i.e., where the error remains within an acceptable range.

Sampling different basis subsets introduces diversity among anchor functions, which helps shrink the feasible set more effectively. Importantly, since the minimal feasible set is an intersection of all feasible sets, additional anchor functions can only decrease the volume of the minimal feasible set $\mathcal{S}$, while still ensuring that the true function $f$ lies within it, provided that the extrapolation constraint $\mathcal{E}_\Omega \leq \Tilde{\mathcal{E}}_\Omega$ is maintained.

This process is formalized in Algorithm~\ref{alg:creating_anchor_functions}.

\begin{algorithm}[hb]
\caption{Creating Anchor Functions}\label{alg:creating_anchor_functions}
\begin{algorithmic}
\STATE {\bfseries Input:} Domain $\Omega$, target region $\Xi$, sample points $\{x_i\}_{i=1}^N \subset \Omega$, and basis functions $\{\phi_k\}_{k=1}^d$ of the function space $\mathcal{F}$.
\STATE {\bfseries Hyperparameters:} Degree of anchor function $m$, number of anchor functions $M$.
\STATE Initialize: Anchor function list $\mathcal{A} \gets \emptyset$.

\WHILE{$|\mathcal{A}| < M$}
    \STATE Randomly sample $m$ basis functions $\{\phi_{k_j}\}_{j=1}^m$.
    \STATE Find the coefficients $\{c_j\}_{j=1}^m$ using an optimization procedure (e.g., LS fitting on $\{x_i\}$)
    \STATE Define the anchor function $a(x) = \sum_{j=1}^m c_j \phi_{k_j}(x)$.
    \STATE Use whichever certificate is computable stably, together with empirical estimation $\Tilde{\mathcal{E}}_\Omega$: $$\delta \gets \sqrt{\kappa_{\mathrm{spec}}}\, \Tilde{\mathcal{E}}_\Omega \quad \text{or} \quad \delta \gets \sqrt{\kappa_{r}}\, \Tilde{\mathcal{E}}_\Omega .$$
    \IF{$a$ is not already in $\mathcal{A}$}
        \STATE Add $a$ to $\mathcal{A}$.
    \ENDIF
\ENDWHILE

\STATE \textbf{Return} $\mathcal{A}$

\end{algorithmic}
\end{algorithm}


\begin{remark}[Functional trust-region interpretation of Algorithm~\ref{alg:creating_anchor_functions}]
The construction of anchor functions in Algorithm~\ref{alg:creating_anchor_functions} can be interpreted as a functional analogue of trust-region optimization. While classical trust-region methods restrict parameter updates to regions where a local model is reliable~\cite{NocedalWright2006, ConnGouldToint2000}, each anchor $a_i$ with tolerance $\delta_i$ here defines a functional trust region that constrains admissible behavior on the extrapolation domain~$\Xi$. This viewpoint clarifies how anchor feasibility enforces stability on the extrapolation domain without altering the underlying approximation procedure.
\end{remark}

\subsection{Asymptotic limit under exact certification}

We consider the idealized case in which $\mathcal{E}_\Omega(a)$ is known exactly for every anchor $a \in \mathcal{F}$. We analyze the intersection of all corresponding feasible constraints and show that the minimal feasible set reduces to $\{f\}$. This characterizes the asymptotic limit of the framework under exact certification. To formalize this idealized setting, we consider a sequence of anchor functions 
$\{a_n\}_{n=0}^\infty \subset \mathcal{F}$ that is dense in $\mathcal{F}$. 

In the next theorem, for simplicity, we use the spectral bound of Theorem~\ref{theorem:all_kappa_calculation_improved}, though any valid certification constant would suffice.
\begin{theorem}
Let \( \{\phi_k\}_{k=1}^d \) be an orthogonal basis over the domain \( \Omega \), and let the associated function space be \( \mathcal{F} = \spann{\phi_k}_{k=1}^d \). Assume that the target function \( f \in \mathcal{F} \), and that for any \( a \in \mathcal{F} \), the extrapolation error \( \mathcal{E}_\Omega(a) \) is known exactly.

Then, the minimal feasible set \( \mathcal{S} \), defined as the intersection of all extrapolation constraints associated with the anchor functions, for all anchor functions as described in this section, converges to the singleton set:
\[
\mathcal{S} = \{f\}.
\]
\label{theorem:ideal_feasible_space}
\end{theorem}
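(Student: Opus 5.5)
The plan is to establish two inclusions, $f\in\mathcal{S}$ and $\mathcal{S}\subseteq\{f\}$. For every anchor $a\in\mathcal{F}$ the exact certificate gives the radius $\delta(a):=\sqrt{\kappa_{\mathrm{spec}}}\,\mathcal{E}_\Omega(a)$. This radius depends on the anchor only through its exactly known in-domain error $\|f-a\|_\Omega$. The feasible set attached to $a$ is therefore $\mathcal{S}(a,\delta(a))=\{g\in\mathcal{F}:\|g-a\|_\Xi\le\sqrt{\kappa_{\mathrm{spec}}}\,\|f-a\|_\Omega\}$, and I take $\mathcal{S}=\bigcap_{n}\mathcal{S}(a_n,\delta(a_n))$ over the dense sequence $\{a_n\}$. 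The first inclusion is immediate. By Theorem~\ref{theorem:all_kappa_calculation_improved}, applied to $e=f-a$ after normalizing the $\Omega$-orthogonal basis, we have $\|f-a\|_\Xi\le\delta(a)$. So each $a_n$ is a valid $\delta(a_n)$-anchor, and part (1) of Lemma~\ref{lemma:ext_bound} (which only uses the individual memberships) gives $f\in\mathcal{S}$.

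For the reverse inclusion I fix $g\in\mathcal{S}$ and show $\|g-f\|_\Xi=0$. The key choice is to use anchors close to $f$ itself. Since $\mathcal{F}$ is finite dimensional, all norms on it are equivalent, and $\|\cdot\|_\Omega$ is a genuine norm because the basis is $\Omega$-orthogonal with $\|\phi_k\|_\Omega>0$. Hence density of $\{a_n\}$ yields a subsequence $a_{n_j}\to f$ with $\varepsilon_j:=\|f-a_{n_j}\|_\Omega\to0$ and also $\|f-a_{n_j}\|_\Xi\le\sqrt{\kappa_{\mathrm{spec}}}\,\varepsilon_j\to0$. Membership $g\in\mathcal{S}(a_{n_j},\delta(a_{n_j}))$ together with the triangle inequality, as in part (2) of Lemma~\ref{lemma:ext_bound}, gives
\[
\|g-f\|_\Xi\le\|g-a_{n_j}\|_\Xi+\|a_{n_j}-f\|_\Xi\le 2\sqrt{\kappa_{\mathrm{spec}}}\,\varepsilon_j\longrightarrow0 .
\]
Therefore $\|g-f\|_\Xi=0$. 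The phrase ``converges to'' can be read in the same way: the partial intersections $\bigcap_{n\le N}$ are nested, contain $f$, and by the bound above have $\Xi$-diameter at most $4\sqrt{\kappa_{\mathrm{spec}}}\min_{n\le N}\|f-a_n\|_\Omega\to0$.

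The main obstacle is upgrading $\|g-f\|_\Xi=0$ to $g=f$. This needs $\|\cdot\|_\Xi$ to be a norm on $\mathcal{F}$ and not merely a seminorm, i.e.\ the $\Xi$-Gram matrix $\tilde G$ must be nonsingular. The problem formulation asserts that $\|\cdot\|_\Xi$ is a norm on $\mathcal{F}$ induced by an inner product, and I would invoke that directly. I would also remark that without it the same argument still shows that $\mathcal{S}$ collapses to $f+\ker\tilde G$, i.e.\ to $\{f\}$ modulo functions invisible on $\Xi$, which is all that matters for extrapolation error.

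If one prefers not to rely on the chosen certification constant, I would add a second argument that works for any valid constant $\kappa$. Choosing anchors $a_n\to g$ gives the constraint $\|g-a_n\|_\Xi\le\sqrt{\kappa}\,\|f-a_n\|_\Omega$, and its left side tends to $0$. This alone does not force $g=f$, which is why the anchors must accumulate at $f$ rather than at $g$. I expect the only real care to be in this choice of approximating sequence and in the norm-versus-seminorm issue.
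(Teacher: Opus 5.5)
Your proposal is correct and is essentially the paper's argument: both use anchors with vanishing in-domain error, so the bound $\|h-f\|_\Xi\le 2\sqrt{\kappa_{\mathrm{spec}}}\,\mathcal{E}_\Omega(a)$ from Lemma~\ref{lemma:ext_bound}(2) shrinks to zero and, together with $f\in\mathcal{S}$, leaves only $f$. The paper packages this through nested sets $\mathcal{S}_n$ (anchors with $\|a_i\|_\Omega<1/n$) and Cantor's intersection theorem, whereas your direct pointwise argument is slightly leaner and makes explicit two assumptions the paper uses tacitly: that anchors with arbitrarily small $\Omega$-error exist (so the index sets are nonempty), and that $\|\cdot\|_\Xi$ is a genuine norm on $\mathcal{F}$.
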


\begin{proof}
Without loss of generality, assume \( f \equiv 0 \); otherwise work in the shifted space
\[
\mathcal{F} - f = \{h - f : h \in \mathcal{F}\}.
\]

For any anchor \( a \in \mathcal{F} \), using Theorem~\ref{theorem:all_kappa_calculation_improved}
\[
\mathcal{E}_\Xi(a) \le \sqrt{\kappa_{\mathrm{spec}}} \mathcal{E}_\Omega(a) = \sqrt{\kappa_{\mathrm{spec}}} \|a\|_\Omega.
\]
By Lemma~\ref{lemma:search_space_size_upper_bound}, the feasible set \( \mathcal{S}_i \) associated with anchor \( a_i \) satisfies
\[
\forall h \in \mathcal{S}_i:\quad \mathcal{E}_\Xi(h) \le 2\sqrt{\kappa_{\mathrm{spec}}} \|a_i\|_\Omega.
\]
Define
\[
\mathcal{S} := \bigcap_i \mathcal{S}_i.
\]
Following Theorem~\ref{theorem:all_kappa_calculation_improved}, \( f \in \mathcal{S}_i \) for all \( i \), hence \( f \in \mathcal{S} \).

For each \( n \in \mathbb{N} \), let
\[
A_n := \{ i : \|a_i\|_\Omega < 1/n \}, \qquad
\mathcal{S}_n := \bigcap_{i \in A_n} \mathcal{S}_i.
\]
Each \( \mathcal{S}_n \) is nonempty (since \( f \in \mathcal{S}_n \) for all \( n \)) and closed (intersection of closed sets). Moreover, \( A_{n+1} \subseteq A_n \), so \( S_{n+1} \subseteq \mathcal{S}_n \); thus \( (\mathcal{S}_n)_{n\in\mathbb{N}} \) is nested. 

For any \( h \in \mathcal{S}_n \) and any \( i \in A_n \),
\[
\mathcal{E}_\Xi(h) \le 2\sqrt{\kappa_{\mathrm{spec}}} \|a_i\|_\Omega \le \frac{2\sqrt{\kappa_{\mathrm{spec}}}}{n}.
\]
Since \( f \equiv 0 \), we may write \( \mathcal{E}_\Xi(h) = \|h\|_\Xi \), hence
\[
\|h\|_\Xi \le \frac{2\sqrt{\kappa_{\mathrm{spec}}}}{n}.
\]
Thus for any \( h_1, h_2 \in \mathcal{S}_n \),
\[
\|h_1 - h_2\|_\Xi \le \|h_1\|_\Xi + \|h_2\|_\Xi \le 4\frac{\sqrt{\kappa_{\mathrm{spec}}}}{n},
\]
so \( \mathrm{diam}_\Xi(\mathcal{S}_n) \to 0 \) as \( n \to \infty \).

The space \( \mathcal{F} \) is finite-dimensional, hence complete under \( \|\cdot\|_\Xi \). By Cantor's intersection theorem in complete metric spaces
(e.g., \citep[see, e.g., Chapter~2]{folland1999real}), the intersection
\[
T := \bigcap_{n=1}^\infty \mathcal{S}_n
\]
consists of a single function; denote it by \( h^\star \).

Since each \( \mathcal{S}_n \) is an intersection over a subset of the indices used to define \( \mathcal{S} \), we have
\[
\mathcal{S} = \bigcap_i \mathcal{S}_i \subseteq \bigcap_{n=1}^\infty \mathcal{S}_n = \{h^\star\},
\]
so \( \mathcal{S} \subseteq \{h^\star\} \). On the other hand, \( f \in \mathcal{S} \), hence \( h^\star = f \) and therefore \( \mathcal{S} = \{f\} \).
\end{proof}

We conclude this part with three remarks. 

First, we model our asymptotic setting by considering a sequence of anchor functions 
$\{a_n\}_{n=0}^\infty \subset \mathcal{F}$ that is dense in $\mathcal{F}$. This scenario can be materialized, for example, by starting from the least-squares solution $a_0$ that minimizes $\|f-a\|_\Omega^2$ and then perturbing its coefficients along basis directions with amplitudes tending to zero. This generates anchors that explore $\mathcal{F}$ with arbitrary fineness while maintaining controlled in-domain error. 

Second, Theorem~\ref{theorem:ideal_feasible_space} demonstrates that in this optimistic setting, where the errors \( \mathcal{E}_\Omega(a) \) are known exactly for all \( \mathcal{S} \), the feasible set collapses to the true function \( f \). In practice, however, such exact knowledge is unattainable due to measurement noise and model uncertainty, so this result should be understood as a theoretical bound rather than a practical outcome.

Third, if the sampling density on $\Omega$ increases so that the empirical error $\widetilde E_\Omega$ converges to the true error $E_\Omega$, then the feasible region constructed from the certified bounds converges to that of
Theorem~\ref{theorem:ideal_feasible_space}. In this limit, the feasible set reduces to $\{f\}$, implying consistency of the projection-based estimator under exact certification.

\section{Numerical Experiments and Applications} \label{sec:ExperimentalResults}

This section provides a systematic validation of the theoretical results developed in Section~\ref{sec:extrapolation method}. We organize our numerical examples according to the analytical components they are designed to verify. First, we examine the projection guarantee of Theorem~\ref{thm:gurantee1} in progressively richer settings, beginning with a minimal synthetic example that isolates the geometric mechanism of feasibility and projection, and culminating in a real-world geomagnetic dataset. We then study the sharpness and numerical behavior of the extrapolation bounds derived in Section~\ref{section:creating_anchor_functions}, including comparisons between worst-case and spectral constants. Finally, we demonstrate the practical impact of the probabilistic certification framework on manifold and PDE-based problems. Throughout, we report extrapolation performance using the root error functional $\mathcal{E}_D(\cdot)$ (see Definition~\ref{eq_ie_mse_objective_2}), to preserve the physical units of the underlying problem.

In Subsections\ref{subsec:numerical_projection}-\ref{subsec:certification}, we state several theoretical bounds for bases that are orthogonal (or orthonormal) with respect to the in-domain inner product on $\Omega$. In our experiments, $\Omega=[-1,c]$ is a strict subinterval of the canonical Legendre orthogonality domain $[-1,1]$, so the raw Legendre basis is not orthogonal on $\Omega$. Accordingly, we compute all bound-related quantities (in particular the extrapolation constant $\kappa_{spec}$) using a basis obtained by reparameterizing the Legendre functions to the $\Omega$-domain via an affine change of variables. Concretely, letting $c\in(-1,1)$ be the cutoff, we map $u\in[-1,1]$ to $t\in[-1,c]$ by
\[
t = \alpha u + \beta, \qquad \alpha=\frac{c+1}{2},\ \ \beta=\frac{c-1}{2},
\]
and define the $\Omega$-aligned basis
\[
\tilde\varphi_k(t) \;:=\; \varphi_k\!\left(\frac{t-\beta}{\alpha}\right), \qquad k=0,\dots,d,
\]
which is orthogonal on $t\in[-1,c]$ with respect to the pulled-back $L^2$ inner product. We then compute $\kappa_{spec}$ using $\{\tilde\varphi_k\}$ on $\Omega$ and $\Xi=[c,1]$.
All fitted predictors (LS/Ridge/LASSO) and all reported errors are evaluated directly in the original $t$-coordinates using the standard Legendre coefficient representation; i.e., the basis reparameterization is used only to enforce the orthogonality assumptions for the theoretical constants, not to alter the learned functions.

\subsection{Projection Guarantees in Practice} \label{subsec:numerical_projection}

We begin with a minimal setting designed to isolate the geometric effect of projection onto a certified feasible set. The goal is to examine how the feasibility constraint alters extrapolation behavior independently of model complexity or application-specific structure.

\subsubsection{Extrapolating with simple anchors}
\label{sec:easily_approximated_anchors}

We consider the damped oscillator
\[
f(t) = e^{-\zeta \omega_0 t}\Big(\cos(\omega_1 t) + \tfrac{\sqrt{\zeta}}{\sqrt{1-\zeta^2}} \sin(\omega_1 t)\Big),
\quad \omega_1=\omega_0\sqrt{1-\zeta^2},
\]
with $\zeta=0.3$ and $\omega_0=2\pi$.
The main motivation of this experiment is to show that anchors do not need to be complicated. In many real-world scenarios, practitioners can easily provide bounds or envelopes for the function, even when its exact form is unknown. Such bounds, for example, constant upper/lower limits or monotonic decay ranges, are straightforward to approximate and thus serve as natural anchors. To illustrate this, we use the simplest possible case: a constant anchor $a(t)\equiv 0$, together with symmetric bounds of $\pm 1.2$ around it. These bounds are encoded by a worst-case feasible radius $\delta_{\max} = 1.2$ in terms of the $\mathcal{E}$ error, corresponding to the error of a constant function that saturates the upper bound. In practice, one can select a more restrictive bound and obtain greater improvements.

The domain $T=[-1,1]$ is split at $t=0$ into $\Omega=[-1,0]$ (fit region) and $\Xi=(0,1]$ (extrapolation region). The oscillator is sampled on grids of size $|\Omega|=100$ and $|\Xi|=400$, with additive Gaussian noise of standard deviation $\sigma=0.5$ applied only in $\Omega$. To estimate $\mathcal{E}_\Xi$, we approximate the error integral from samples using Simpson's rule. We fit a degree–12 Legendre model using LASSO on $\Omega$, with $\alpha=0.01$, and then project it onto the anchor’s feasible ball in $\Xi$. As guaranteed by our projection theory, this operation cannot increase the $\Xi$-error and in fact strictly improves extrapolation as the original solution lies outside the feasible region.

The extrapolation errors achieved, and demonstrated in Figure~\ref{fig:oscillator_projection}:
\[
\mathcal{E}^{LASSO}_\Xi=1.737,  \quad \mathcal{E}_\Xi^{Projection}=1.153 \, .
\]
Constituting an error reduction of 0.5846. Throughout this section, we denote by $\Delta \mathcal{E}_\Xi$ this reduction in extrapolation error, as done by projection, over the domain $\Xi$.

\begin{figure}[tb]
  \centering
  \includegraphics[width=0.8\linewidth]{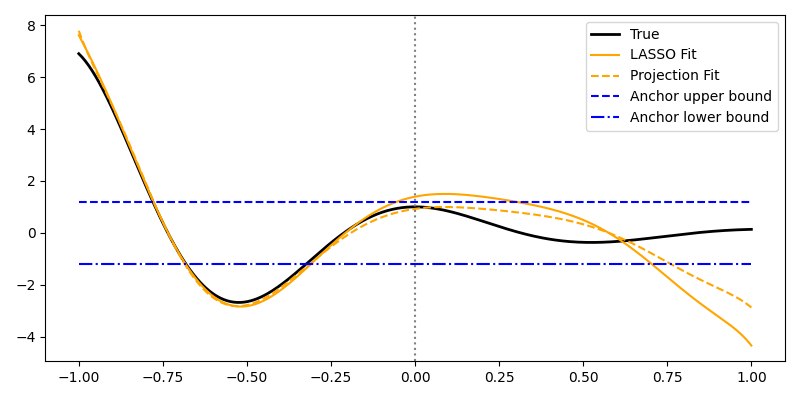}
  \caption{True oscillator (black), LASSO fit (solid orange), projected fit (dashed orange), and anchor bounds (blue, $\pm 1.2$ around 0). The vertical line marks the split between $\Omega$ and $\Xi$. Projection toward the constant anchor substantially reduces extrapolation error.}
  \label{fig:oscillator_projection}
\end{figure}

This experiment shows that simple bounds, which are often readily available in applications, can be used as anchor functions: even a crude constant anchor with rough bounds provides a useful constraint that, when combined with projection, guarantees improvement (or at least no deterioration) in extrapolation error. Furthermore, Theorem~\ref{theorem:5} also provides explicit bounds on the improvement in extrapolation error achieved by projection:
\[
0.3540
\leq
\Delta\mathcal{E}_\Xi
= 0.5846
\leq
0.6098 \,.
\]
Thus, the empirically observed reduction in extrapolation error lies neatly between
the theoretically predicted lower and upper bounds, providing another concrete
verification of the theory.

\begin{remark} [Why does the projection still violate the bounds]
In Figure~\ref{fig:oscillator_projection}, the projected solution can still violate the original pointwise bounds. The reason is that we enforce the anchor constraint in an \(L_2(\Xi)\)-ball around \(a\equiv 0\), whereas the prescribed bounds are pointwise (\(L_\infty\)) constraints; converting the latter into an \(L_2\) radius via a conservative worst-case mapping yields a large feasible set. Consequently, the feasible set may contain functions that violate the pointwise bounds, and the projection may land on such a function. In practice, one may use a tighter radius to ensure the projection respects the intended bounds.
\end{remark}

\subsubsection{Geometric Interpretation of the Projection Step}

We next examine the geometric structure underlying the projection guarantee of Theorem~\ref{thm:gurantee1}. The goal of this experiment is not merely to demonstrate improvement, but to illustrate how the distance to the feasible set and the orientation of the error jointly determine the magnitude of the extrapolation reduction. By visualizing both the feasible region and the relative positions of competing predictors, we obtain a direct geometric interpretation of the lower and upper improvement bounds.

To examine the geometric behavior of the projection step, we consider a high-degree polynomial setting in which extrapolation is known to be unstable. In this experiment, we consider the interval $[-1,1]$ and define the ground truth as the sum of the first $20$ Legendre basis functions. We set $\Omega = [-1,0.9)$ and draw $50$ training samples from this region. The observations are perturbed by additive Gaussian noise with standard deviation $\sigma = 0.01$. Figure~\ref{fig:theorem_5_experiment_ls_lasso_dist_visualization} compares the extrapolation performance of least squares (LS), LASSO (with $\alpha = 0.001$), and the projected solution obtained via Theorem~\ref{theorem:5} over the extrapolation domain $\Xi = [0.9,1]$. In this configuration, the spectral extrapolation constant computed according to Theorem~\ref{theorem:all_kappa_calculation_improved} is $\kappa_{\mathrm{spec}} = 372{,}167$, reflecting the severe amplification induced by the high polynomial degree. Visualized in Figure~\ref{fig:theorem_5_experiment_ls_lasso_dist_visualization}, the LS method strongly overfits the noise. 

We construct the feasible set with radius 
$\sqrt{\kappa_{\mathrm{spec}}}\,E_\Omega(g_{\mathrm{LS}})$ around the LS solution. This set is certified by Theorem~\ref{theorem:all_kappa_calculation_improved} to contain the true function on~$\Xi$. Since the ball is centered at the LS solution, it trivially contains the LS predictor itself. The LASSO prediction, by contrast, lies outside this feasible set.

\begin{figure}[htb]
    \centering
    \begin{subfigure}[]{0.35\textwidth}
    \includegraphics[width=\textwidth]{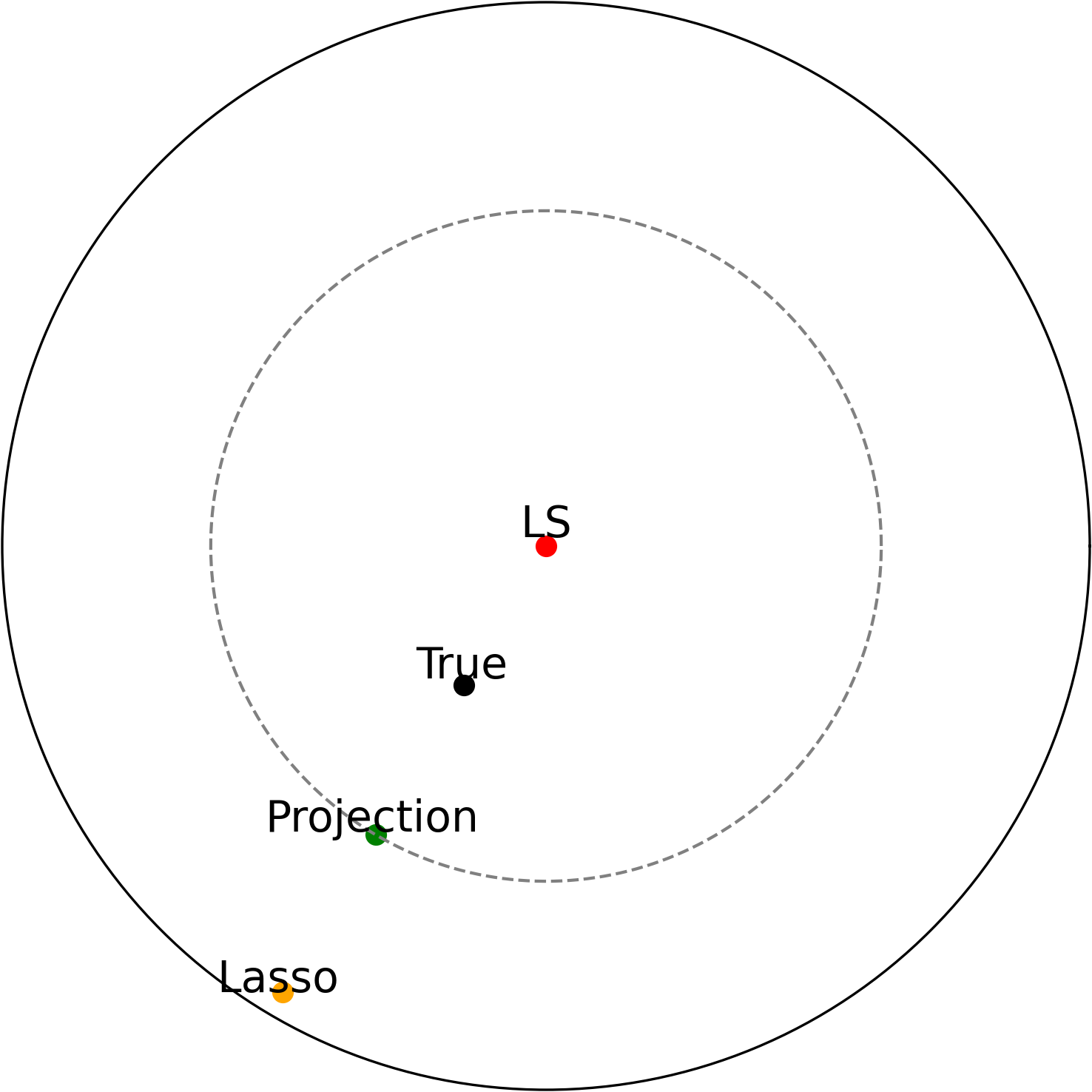}
        \caption*{}    
    \end{subfigure} \qquad
    \begin{subfigure}[]{0.42\textwidth}
        \includegraphics[width=\textwidth]{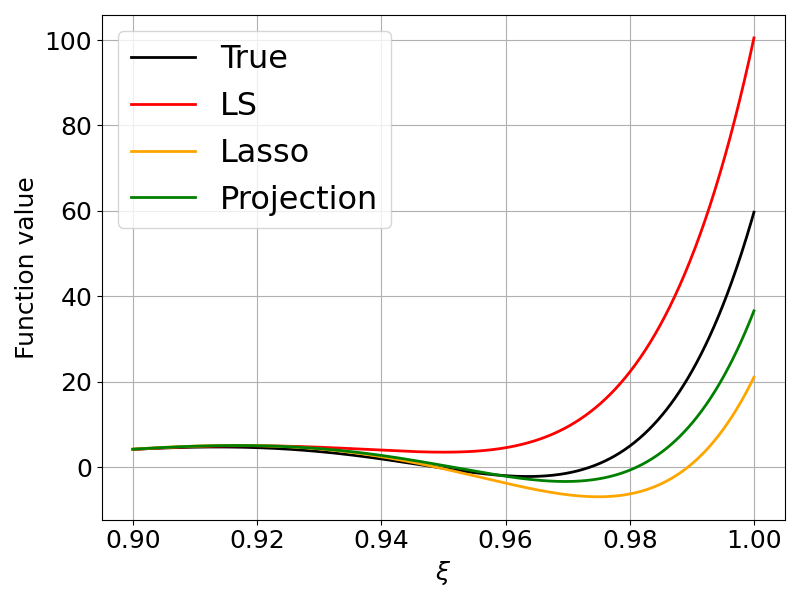}
        \caption*{}  
    \end{subfigure}
\caption{Extrapolation behavior over $\Xi$ for LS, LASSO ($\alpha=0.001$), and the projected solution of Theorem~\ref{theorem:5}. 
Left: distance–orientation plot relative to the LS approximation; the dashed circle denotes the feasible-set boundary. 
Right: true function (black), LS (red), LASSO (orange), and projection (green). 
Reported root errors on $\Xi$ are LS: 4.39, LASSO: 3.51, Projection: 2.00, with improvement bounds $0.30 \le \Delta E_\Xi \le 1.54$.}
\label{fig:theorem_5_experiment_ls_lasso_dist_visualization}
\end{figure}

Consequently, as stated in Theorem~\ref{theorem:5}, projecting LASSO’s solution onto the feasible set yields a function with strictly better extrapolation performance over $\Xi$. Quantitatively, the $\Xi$-domain errors with respect to the true function are
\[
\mathcal{E}_\Xi^{LS}: 4.39,\qquad
\mathcal{E}_\Xi^{LASSO}: 3.51,\qquad
\mathcal{E}_\Xi^{Projection}: 2.00,
\]
demonstrating that the projected solution substantially improves upon both LS and LASSO in this experiment. Importantly, Theorem~\ref{theorem:5} also guarantees upper and lower error improvement. In our setting, the reduction in the extrapolation-error bound achieved by the projected solution satisfies
\[
0.30 \leq \Delta\mathcal{E}_{\Xi}=1.51 \leq 1.54,
\]
The observed reduction in $\Xi$-error from LASSO to the projection is $1.51$, which is very close to the upper bound $1.54$. This near-saturation of the bound can be attributed to the strong alignment between the orientations of the true function and the LASSO solution over $\Xi$.

\subsubsection{Extrapolation on a real-world geomagnetic dataset}

Having examined the projection mechanism in controlled synthetic settings, we now assess its behavior on real observational data. This experiment tests whether the non-worsening guarantee and quantitative improvement bounds of Theorem~\ref{theorem:5} persist in a realistic scientific application where the target function is not analytically prescribed and the anchor radii must be estimated from empirical in-domain error.

For our real-world application, we utilize a geomagnetic dataset derived from a one-dimensional meridional cross-section of the Earth's main magnetic field. This field is represented by the radial component $B_r$ as a function of $\mu = \cos\theta \in [-1,1]$. The choice of $\mu$ places the problem naturally in a Legendre polynomial setting, making polynomial approximation in a Legendre basis particularly appropriate. We partition the domain $T=[-1,1]$ at $\mu=0.8$ into a fitting region $\Omega=[-1,0.8]$ and an extrapolation region $\Xi=[0.8,1]$, thereby enforcing a nontrivial out-of-domain prediction task near the polar cap. On $\Omega$, we fit two degree-8 polynomial models using LS and ridge regression (the latter with an $\ell^2$-regularization parameter $\alpha=10^{-1}$). For this configuration, the corresponding extrapolation condition number is $\kappa_{\mathrm{spec}}=309$, indicating moderate but non-negligible amplification of in-domain error under extrapolation.

The field $B_r(\mu)$ is computed from the official WMM--2025 Gauss coefficients by evaluating the standard spherical harmonic expansion (associated Legendre polynomials) at Earth’s reference radius and a fixed longitude, then sampling along one meridian. The dataset consists of $N=721$ uniformly spaced points over $\mu\in[-1,1]$ (spacing $\Delta\mu\approx 2.78\times 10^{-3}$). Since the underlying continuous field is not available in closed form, we construct a continuous interpolated signal, which serves as a surrogate, $f(\mu)$ from these samples using a shape-preserving cubic Hermite interpolant (PCHIP) \cite{FritschButland1984, FritschCarlson1980} on $[-1,1]$ (with linear extrapolation outside the sampled range). This allows us to evaluate $f$ on an arbitrary numerical grid, without being constrained by the number of measured points.

For numerical integration and reporting, we evaluate models on a uniform grid $T=\{\mu_j\}_{j=1}^{400}\subset[-1,1]$. The fit region $\Omega$ is the subset of grid points with $\mu_j\le 0.8$ (yielding $|\Omega|=360$ points). For evaluating extrapolation performance, we use a dense uniform grid of $400$ points over $\Xi=[0.8,1]$ to reduce discretization error in the approximation of $\mathcal{E}_\Xi$. This grounds the experiment in real observational data while keeping the reported errors stable with respect to the evaluation grid. The Gauss coefficients were obtained from the official NOAA/NCEI World Magnetic Model release \cite{noaa_wmm_coefficients}.

To demonstrate our ability to improve extrapolation while creating anchor functions directly from fitted models (without prior knowledge), we compute feasible radii using Theorem~\ref{theorem:all_kappa_calculation_improved}. In practice the in-domain error is estimated from noisy observations on $\Omega$; accordingly, the feasible radius is formed using an empirical proxy $\tilde{\mathcal{E}}_\Omega$ for $\mathcal{E}_\Omega$. This experiment therefore constitutes a direct numerical verification of the single-anchor projection guarantee in Theorem~\ref{theorem:5}. 

\begin{table}[tb]
\centering
\caption{Errors on $\Omega$ (fit) and $\Xi$ (extrapolation), $\kappa_{\mathrm{spec}}=309$, cutoff $=0.8$.
The final column reports the extrapolation error on $\Xi$ after projection of
each model into the other’s feasible ball; the smallest projected error is in bold.}
\label{tab:geomag_results}
\begin{tabular}{lcccc}
\toprule
Method & $\mathcal{E}_\Omega$ & $\mathcal{E}_\Xi$ (vs.\ true) & Projected $\mathcal{E}_\Xi$ (vs.\ true) \\
\midrule
LS    & $7.542\times 10^{2}$ & $1.297\times 10^{4}$ & $1.067\times 10^{4}$ \\
Ridge & $9.466\times 10^{2}$ & $6.174\times 10^{3}$ & $\mathbf{1.282\times 10^{3}}$ \\
\bottomrule
\end{tabular}
\end{table}

As summarized in Table~\ref{tab:geomag_results}, projection reduces the extrapolation root error $\mathcal{E}_\Xi$ of the LS fit from $1.297\times 10^{4}$ to $1.067\times 10^{4}$, and that of the ridge regression fit from $6.174\times 10^{3}$ to $1.282\times 10^{3}$, with the projected ridge regression model achieving the best performance and yielding a substantial improvement over the raw fits. Since $\kappa$ is invariant across fits and the feasible radius is $\delta=\sqrt{\kappa_{\mathrm{spec}}}\,\mathcal{E}_\Omega$, the model with smaller error on $\Omega$ induces the smaller feasible set. Because LS is unregularized, it attains the lower $\mathcal{E}_\Omega$ and thus defines the smaller feasible ball; in practice, one would therefore project the ridge regression model onto the LS feasible set, rather than the other way around. We perform both projections here for completeness.

\begin{figure}[tb]
  \centering
  \begin{subfigure}[b]{0.49\linewidth}
    \centering
    \includegraphics[width=\linewidth]{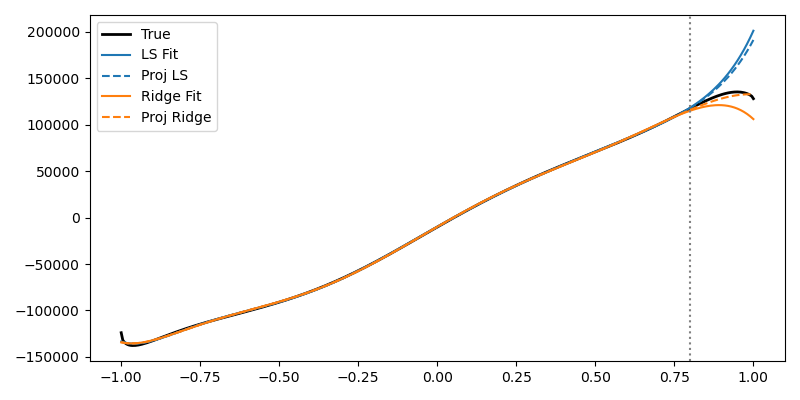}
    \caption{Full view on $\Omega \cup \Xi$.}
    \label{fig:geo_ls_lasso_full}
  \end{subfigure}
  \hfill
  \begin{subfigure}[b]{0.49\linewidth}
    \centering
    \includegraphics[width=\linewidth]{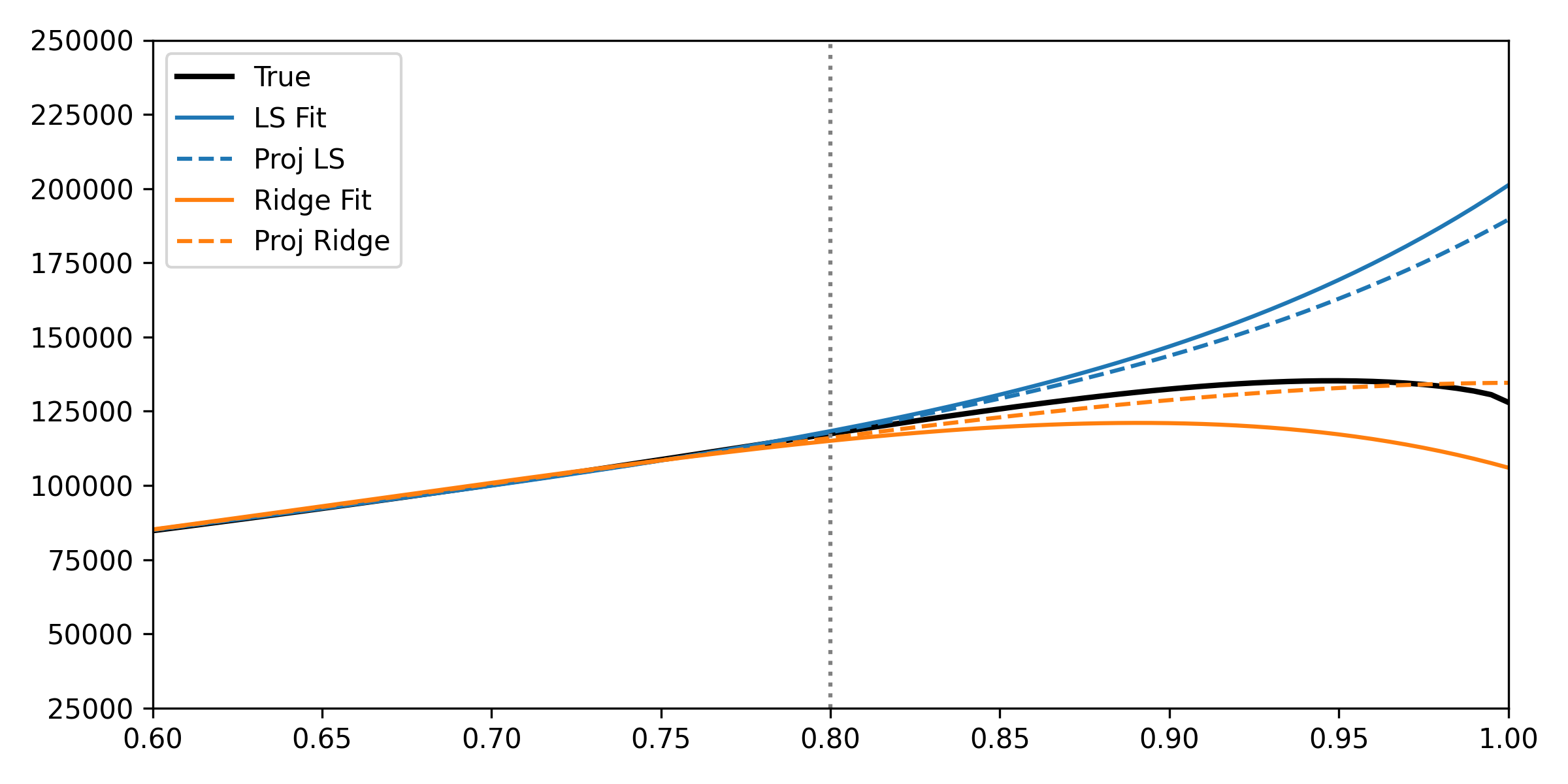}
    \caption{Zoomed-in view on the extrapolation region $\Xi$.}
    \label{fig:geo_ls_lasso_zoom}
  \end{subfigure}
  \caption{LS and ridge regression fits on $\Omega$ and their projected counterparts on $\Xi$
  for $B_r(\mu)$ (degree $8$, $\sigma=0.05$, $\kappa_{\mathrm{spec}}=309$). Projection substantially
  reduces extrapolation error on $\Xi$ for this real-world geomagnetic dataset.}
  \label{fig:geo_ls_lasso_projection}
\end{figure}

The corresponding fits and their projected counterparts are shown in Figure~\ref{fig:geo_ls_lasso_projection}. While both LS and ridge regression provide accurate interpolation over $\Omega$, substantial amplification is visible near the boundary of $\Xi$. The projection step visibly suppresses this boundary overshoot, yielding a smoother and more stable extrapolation profile without altering the in-domain fit. This qualitative behavior is consistent with the non-worsening guarantee and quantitative improvement bounds established in Theorem~\ref{theorem:5}.

Theorem~\ref{theorem:5} also yields explicit lower and upper bounds on the extrapolation improvement. For the LS fit we obtain
\[
1.267\times 10^{3}
\;\le\;
\Delta\mathcal{E}_\Xi^{\mathrm{LS}}
= 2.300\times 10^{3}
\;\le\;
2.312\times 10^{3},
\]
and for ridge regression,
\[
3.123\times 10^{3}
\;\le\;
\Delta\mathcal{E}_\Xi^{\mathrm{Ridge}}
= 4.892\times 10^{3}
\;\le\;
5.697\times 10^{3}.
\]
In both cases, the observed reduction lies strictly within the theoretical interval, confirming the sharpness of the projection bounds in a real-data setting.

\subsection{Certification Quality and Spectral Bounds} \label{subsec:certification}

We now examine the numerical behavior and sharpness of the extrapolation bounds developed in Section~\ref{section:creating_anchor_functions}. While Theorem~\ref{theorem:all_kappa_calculation_improved} establishes a tight spectral constant and Theorem~\ref{theorem:inner_kappa_calculation} provides a numerically stable alternative, their practical relevance depends on how these constants compare with the classical worst-case bound of Theorem~\ref{theorem:all_kappa_calculation}. In this section, we quantify these differences empirically and investigate the extent to which the spectral and probabilistic refinements reduce conservatism in realistic approximation settings.

\subsubsection{Comparison of extrapolation upper bounds}

We begin by empirically validating Theorem~\ref{theorem:kappa_improved_always_better}, which asserts that the spectral extrapolation constant $\kappa_{\mathrm{spec}}$ of Theorem~\ref{theorem:all_kappa_calculation_improved} is always no larger than the classical condition number $\kappa$ of Theorem~\ref{theorem:all_kappa_calculation}. 

\begin{figure}[ht]
    \centering
    \begin{subfigure}[]{0.32\textwidth}
    \includegraphics[width=\textwidth]{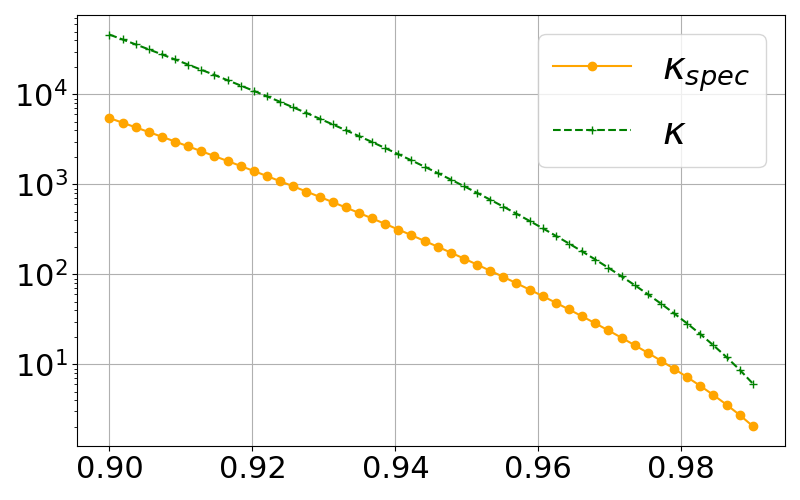}
        \caption*{}    
    \end{subfigure}
    \begin{subfigure}[]{0.32\textwidth}
        \includegraphics[width=\textwidth]{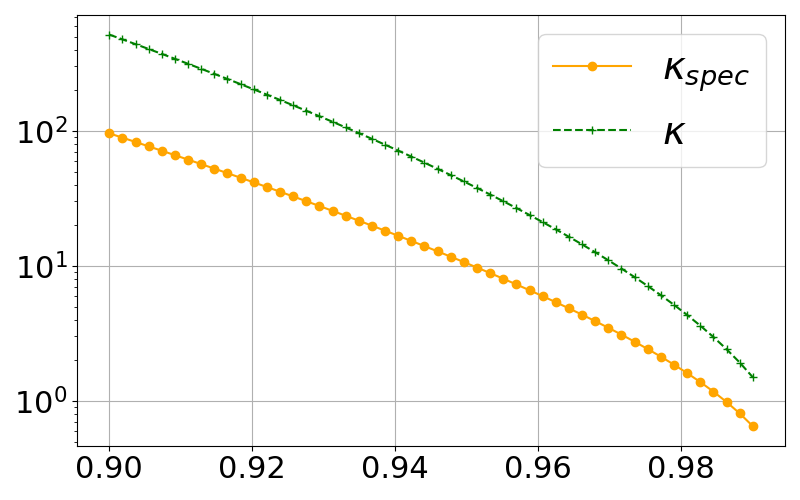}
        \caption*{}  
    \end{subfigure}
    \begin{subfigure}[]{0.32\textwidth}
        \includegraphics[width=\textwidth]{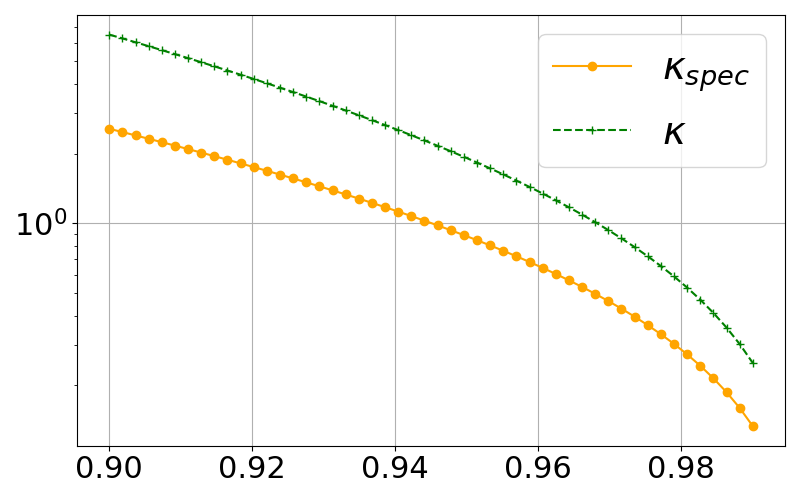}
        \caption*{}  
    \end{subfigure}
\caption{Comparison between upper bounds calculated using Theorem~\ref{theorem:all_kappa_calculation} ($\kappa$) and Theorem~\ref{theorem:all_kappa_calculation_improved} ($\kappa_{\mathrm{spec}}$). The plots represent $15$ basis functions (right), $10$ basis functions (middle), and $5$ basis functions (left) with a log-scaled y-axis.}
\label{fig:bound_comparison}
\end{figure}

Figure~\ref{fig:bound_comparison} compares these bounds for Legendre polynomial bases of sizes 5, 10, and 15, under varying cutoffs between $\Omega$ and $\Xi$. In all cases, the spectral bound is strictly tighter, confirming the theoretical dominance of $\kappa_{\mathrm{spec}}$ and illustrating the degree to which the classical worst-case constant can overestimate extrapolation amplification.

\begin{figure}[ht]
    \centering
    \begin{subfigure}[]{0.32\textwidth}
    \includegraphics[width=\textwidth]{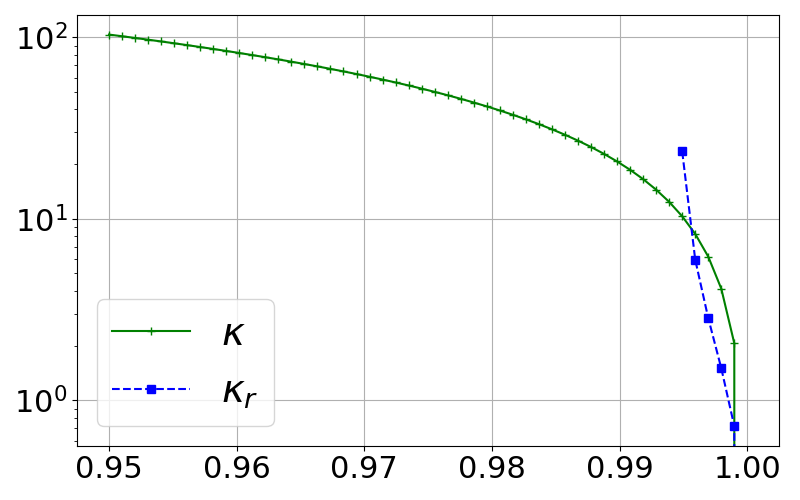}
        \caption*{}    
    \end{subfigure}
    \begin{subfigure}[]{0.32\textwidth}
        \includegraphics[width=\textwidth]{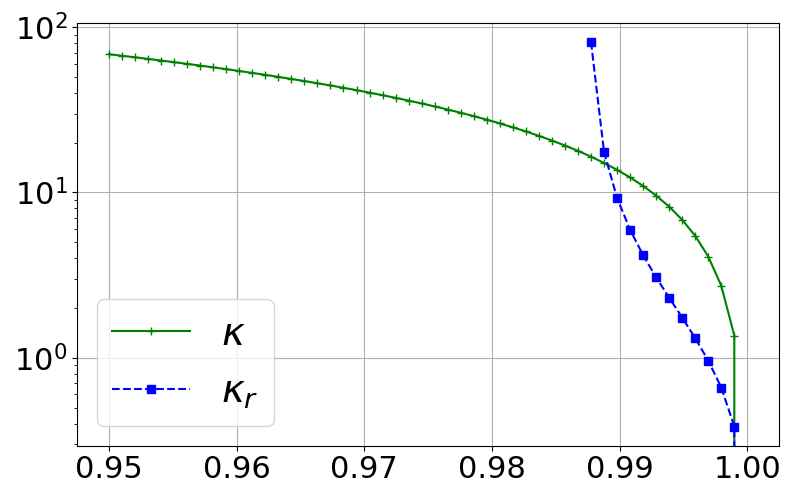}
        \caption*{}  
    \end{subfigure}
    \begin{subfigure}[]{0.32\textwidth}
        \includegraphics[width=\textwidth]{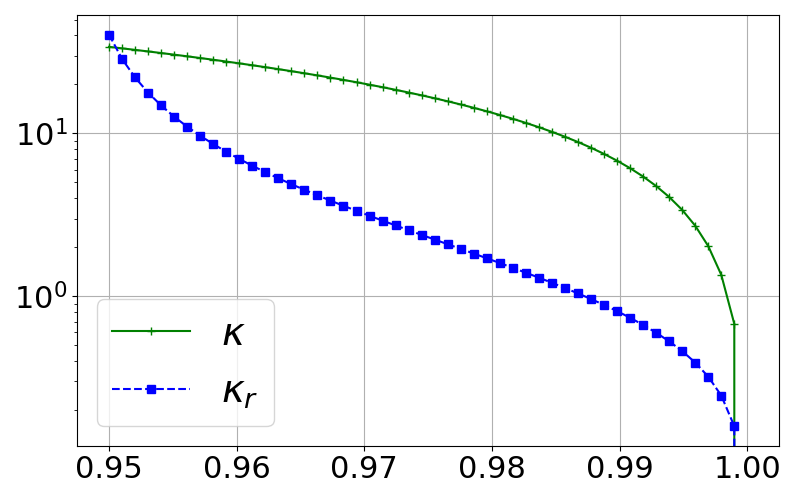}
        \caption*{}  
    \end{subfigure}
\caption{Comparison between upper bounds calculated using Theorems \ref{theorem:all_kappa_calculation} ($\kappa$) and \ref{theorem:inner_kappa_calculation}.
The latter corresponds to the inner-domain bound of Theorem~\ref{theorem:inner_kappa_calculation} and applies when condition~\eqref{eqn:cond_new_kappa} is satisfied. The plots show 15 basis functions (right), 10 basis functions (middle), and 5 basis functions (left) of the Chebyshev polynomials, with a log-scaled y-axis.}
\label{fig:bound_comparison_with_inner}
\end{figure}

As noted in Remark~\ref{remark:bound_comparison_with_inner}, the bound in Theorem~\ref{theorem:all_kappa_calculation} is governed by the behavior of the basis functions outside their orthogonality region, where instability typically increases near domain boundaries. In contrast, Theorem~\ref{theorem:inner_kappa_calculation} applies when both $\Omega$ and $\Xi$ lie within the orthogonality range, a regime in which the basis functions exhibit more regular behavior. Although the inner-domain condition is not always satisfied, in edge-dominated polynomial settings it often yields bounds that are tighter by an order of magnitude, as illustrated in Figure~\ref{fig:bound_comparison_with_inner}. 

We compute both bounds for Legendre polynomial bases of sizes 5, 10, and 15 under varying cutoff locations, following Theorem~\ref{theorem:all_kappa_calculation_improved}. Because these estimates avoid eigenvalue evaluation, they provide a numerically stable alternative to the spectral bound.

\subsubsection{Empirical validation of the probabilistic extrapolation condition number}
\label{subsec:empirical_prob_kappa_sphere}

We now examine the distributional accuracy of the probabilistic extrapolation model introduced in Section~\ref{subsec:prob_anchor_sphere}. In the near-dominant eigenvalue regime, Lemma~\ref{lem:rank_one_beta_sphere} and Corollary~\ref{cor:almost_rank_one_beta_sphere} predict that the normalized Rayleigh quotient 
\begin{equation} \label{eqn:normalized_Rayleigh_q}
    q(c) = \frac{c^\top (G_{\Xi}) \, c}{\|c\|_2^2} \,,
\end{equation}
is well-approximated by a scaled $\mathrm{Beta}\!\left(\tfrac12,\tfrac{d-1}{2}\right)$ distribution. This experiment evaluates how closely the empirical distribution of $q(c)$ follows this theoretical prediction and quantifies the reduction in conservatism relative to the worst-case spectral constant.


\begin{figure}[t]
    \centering
    \includegraphics[width=0.75\linewidth]{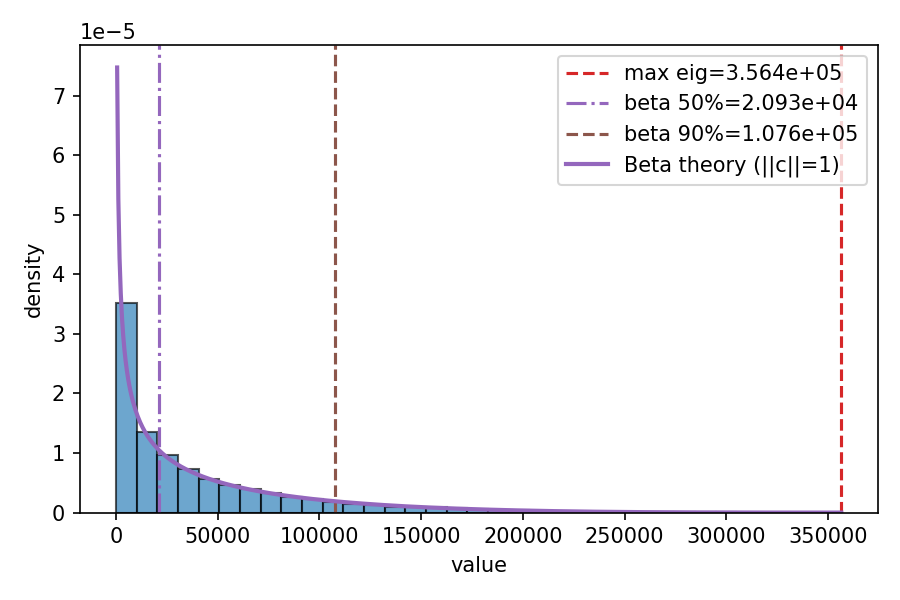}
    \caption{Empirical distribution of
    $q(c)=\frac{c^\top \tilde G_{\Xi}c}{\|c\|_2^2}$ (basis size $m=9$; $n=10{,}000$
    samples), compared to the Beta distribution predicted by Lemma~\ref{lem:rank_one_beta_sphere} and the near-dominant eigenvalue approximation of Corollary~\ref{cor:almost_rank_one_beta_sphere}. Vertical markers indicate the theoretical median ($50\%$), the theoretical $90\%$ quantile, and the worst-case $\lambda_{\max}(\tilde G_{\Xi})$.}
    \label{fig:probabilistic-eigs}
\end{figure}

We use a basis for quadratic spherical harmonics, which consists of $9$ elements in total, and sample $n=10{,}000$ coefficients to evaluate the empirical $\nicefrac{c^\top \tilde G_{\Xi}c}{\|c\|_2^2}$ that estimates~\eqref{eqn:normalized_Rayleigh_q}. This estimation is associated with $\kappa_{\mathrm{spec}}^{\rho}$ of
Definition~\ref{def:prob_spectral_kappa_sphere}, having $\tilde G_{\Xi}$ in place of $G$.

Figure~\ref{fig:probabilistic-eigs} displays the empirical histogram of $q(c)$ in~\eqref{eqn:normalized_Rayleigh_q}. 
In this experiment, the near-dominant eigenvalue condition of Corollary~\ref{cor:almost_rank_one_beta_sphere} is strongly satisfied:
\[
\lambda_{\max}(\tilde G_{\Xi}) = 3.5645\times 10^{5},\quad
\lambda_{2}(\tilde G_{\Xi}) = 2.5897\times 10^{3},\quad
\frac{\lambda_{\max}}{\lambda_2}\approx 1.38\times 10^{2}.
\]
The substantial spectral gap justifies the rank-one Beta approximation of Lemma~\ref{lem:rank_one_beta_sphere}. 
Consistent with this prediction, the empirical distribution closely matches the scaled Beta law, with quantiles
\[
\kappa^{0.5}_{\mathrm{spec}}=2.0927\times 10^{4},\qquad
\kappa^{0.9}_{\mathrm{spec}}=1.0757\times 10^{5},
\]
and corresponding empirical coverages
\[
\Pr[q(c)\le \kappa^{0.5}_{\mathrm{spec}}] \approx 0.5007,\qquad
\Pr[q(c)\le \kappa^{0.9}_{\mathrm{spec}}] \approx 0.9046.
\]

Finally, we observe how the probabilistic bounds substantially reduce the effective extrapolation condition number relative to the worst-case $\kappa_{\mathrm{spec}}=\lambda_{\max}(\tilde G_{\Xi})$.
In the typical ($50\%$) regime this yields
\[
\frac{\kappa_{\mathrm{spec}}}{\kappa^{0.5}_{spec}}
\approx \frac{3.5645\times 10^{5}}{2.0927\times 10^{4}}
\approx 17.0,
\]
and at $90\%$ confidence
\[
\frac{\kappa_{\mathrm{spec}}}{\kappa^{0.9}_{spec}}
\approx \frac{3.5645\times 10^{5}}{1.0757\times 10^{5}}
\approx 3.31.
\]
Thus, the probabilistic anchor bound \eqref{eq:prob_bound_root_sphere} provides substantially tighter typical- and high-confidence extrapolation guarantees than the worst-case spectral bounds of Theorem~\ref{theorem:all_kappa_calculation} and Theorem~\ref{theorem:inner_kappa_calculation}.

\subsection{Manifold and PDE Applications}

We now evaluate the projection framework in geometrically and physically structured settings. Unlike the previous polynomial experiments, the following examples involve multivariate functions defined on curved manifolds and fields generated by partial differential equations. These settings test whether the feasibility-and-projection mechanism, together with the proposed certification strategies, remains effective in realistic scientific computing contexts where geometry, discretization, and noise interact.

\subsubsection{Extrapolation on the sphere via spherical harmonics}
\label{subsec:spherical_experiment}

We next evaluate the projection framework in a genuinely geometric setting, where the underlying function space is induced by the eigenfunctions of the Laplace–Beltrami operator on the sphere $S^2 \subset \mathbb{R}^3$. This experiment tests whether the feasibility-and-projection mechanism remains effective when extrapolation occurs across curved domains and the basis functions are intrinsic manifold harmonics rather than one-dimensional polynomials.

Let $\{Y_\ell^m\}_{\ell \ge 0,\,-\ell \le m \le \ell}$ denote the real spherical harmonics on $\mathbb{S}^2 \subset \mathbb{R}^3$ \cite{schonefeld2005spherical}. Here, we consider all spherical harmonics up to degree $\ell = 3$. Namely,
\[
\{Y_\ell^m : 0 \le \ell \le 3,\ -\ell \le m \le \ell\},
\]
which yields $16$ basis functions. Our ground truth function is
\[
f(x) = \sum_{\ell=0}^{3} \sum_{m=-\ell}^{\ell} Y_\ell^m(x).
\]
The samples for this example are drawn from $\Omega$, defined as the lower quarter of the sphere, with additive Gaussian noise corresponding to $\mathrm{SNR}=30$~dB, where
\begin{equation}
\mathrm{SNR}_{\mathrm{dB}} \equiv 10\log_{10}\!\left(\frac{P_s}{P_n}\right),
\qquad
P_s \equiv \frac{1}{N}\sum_{i=1}^{N} f(x_i)^2,
\qquad
P_n \equiv \frac{1}{N}\sum_{i=1}^{N} \varepsilon_i^2.
\label{eq:snr_def}
\end{equation} 

The sampling and extrapolation regions are depicted in Figure~\ref{fig:sphere_setup}.

\begin{figure}[ht]
    \centering
    \begin{subfigure}[b]{0.48\linewidth}
        \centering
        \includegraphics[width=\linewidth]{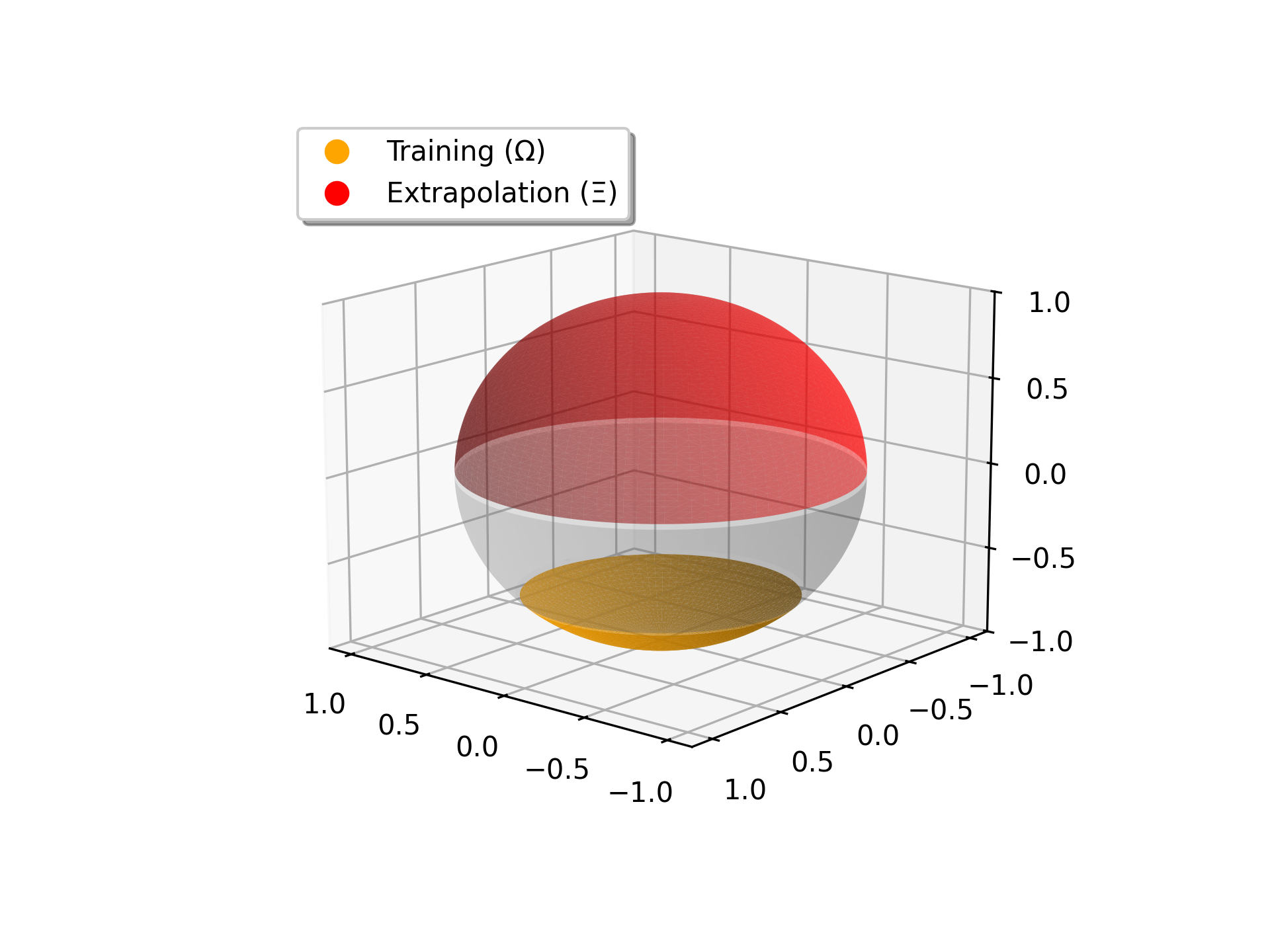}
        \caption{Sampling region $\Omega$ and extrapolation region $\Xi$ on the sphere.}
        \label{fig:sphere_setup}
    \end{subfigure}\hfill
    \begin{subfigure}[b]{0.48\linewidth}
        \centering
        \includegraphics[width=\linewidth]{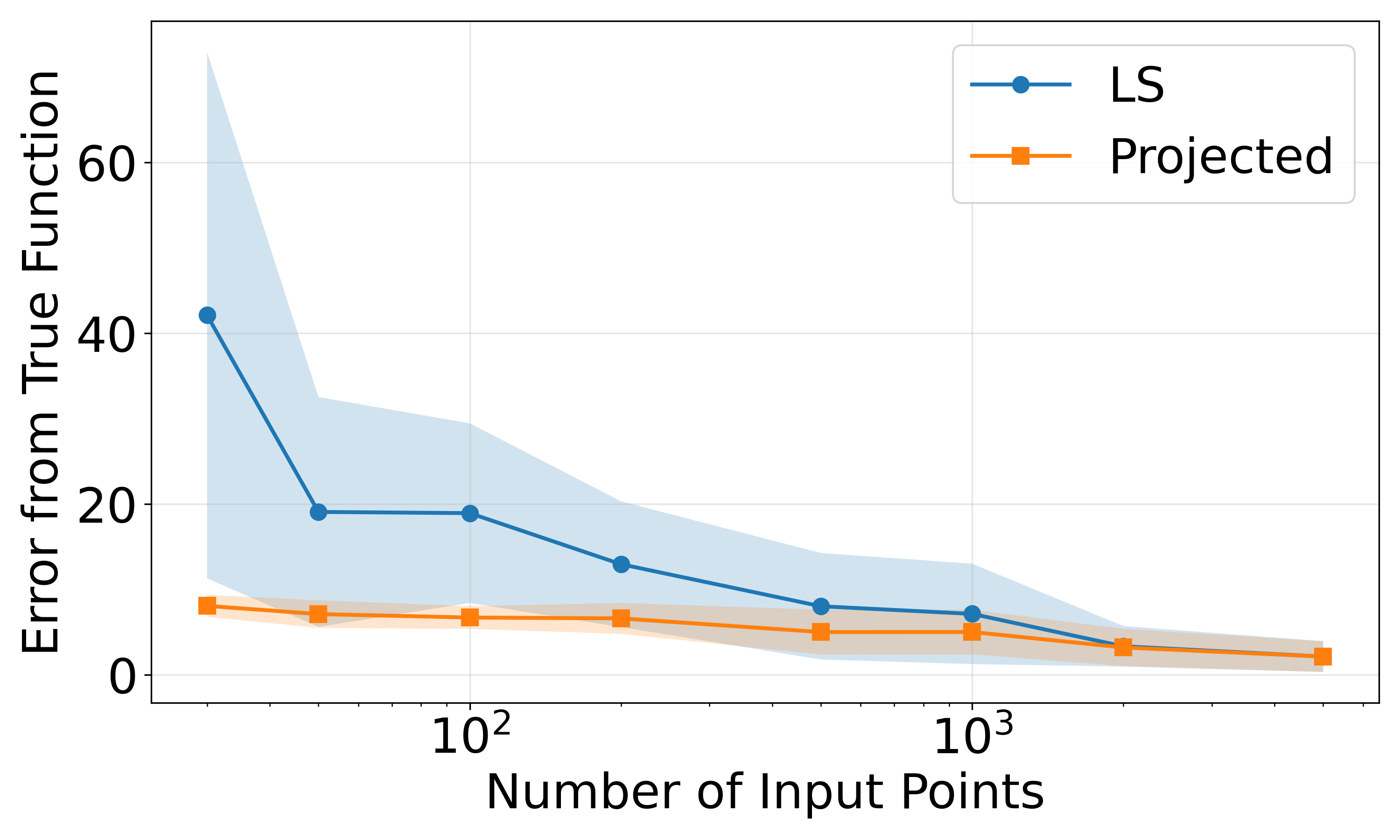}
        \caption{Extrapolation error on $\Xi$ for LS and its projection as a function of $|\Omega|$. Average and std across 10 runs.}
        \label{fig:sphere_ls_points}
    \end{subfigure}
    \caption{Spherical harmonics experiment: Sampling and extrapolation geometry (left) and extrapolation error versus $|\Omega|$ (right). A simple anchor constraint substantially improves accuracy, yielding performance comparable to that obtained with an order-of-magnitude increase in input samples.}
    \label{fig:sphere_experiment}
\end{figure}

Similar to the approach of Section~\ref{sec:easily_approximated_anchors}, we construct an anchor from the simple bounds
\[
\min f = -2.044948,\qquad \max f = 3.935627,
\]
by taking the midpoint
\[
a(x)\equiv a_0 := \tfrac{1}{2}(\min f + \max f) = 0.945339,
\]
and defining the feasible radius
\[
\delta := \sqrt{|\Xi|\,(\max f - a_0)} = 7.495540, \quad \|f - a\|_\Xi \le \delta .
\]
This bound is intentionally conservative and can be estimated numerically from a sufficiently dense grid over $\Xi$, without requiring access to the analytic harmonic expansion. It therefore serves as a simple range-based anchor rather than a sharp spectral certificate.

\begin{table}[ht]
    \centering
    \caption{Extrapolation error on $\Xi$ for LS and projected solutions as a
    function of the number of input points $|\Omega|$, mean and standard deviation across 10 runs.  The last two columns
    give the theoretical interval for the error reduction predicted by
    Theorem~\ref{theorem:5}.}
    \label{tab:sphere_ls_points}
    \begin{tabular}{rccccc}
    \toprule
    $|\Omega|$ &
    $\mathcal{E}_\Xi(g_{\mathrm{LS}})$ &
    $\mathcal{E}_\Xi(\widehat g)$ &
    $\mathcal{E}_\Xi(g_{\mathrm{LS}})-\mathcal{E}_\Xi(\widehat g)$ &
    lower bnd &
    upper bnd \\
    \midrule
     30   & 42.13$\pm$ 30.82 &  8.09$\pm$ 1.29 & 34.04 & 31.80 & 34.54 \\
     50   & 19.09$\pm$ 13.45 &  7.13$\pm$ 1.61 & 11.96 & 10.32 & 12.33 \\
    100   & 18.95$\pm$ 10.52 &  6.73$\pm$ 1.33 & 12.23 & 10.45 & 12.68 \\
    200   & 12.97$\pm$  7.37 &  6.63$\pm$ 1.81 &  6.35 &  5.12 &  6.63 \\
    500   &  8.05$\pm$  6.24 &  5.02$\pm$ 2.63 &  3.02 &  2.32 &  3.18 \\
   1000   &  7.16$\pm$  5.88 &  5.03$\pm$ 2.61 &  2.12 &  1.61 &  2.23 \\
   2000   &  3.36$\pm$  2.37 &  3.22$\pm$ 2.18 &  0.14 &  0.05 &  0.15 \\
   5000   &  2.15$\pm$  1.80 &  2.15$\pm$ 1.80 &  0.00 &  0.00 &  0.00 \\
    \bottomrule
\end{tabular}
\end{table}

To quantify the effect of the projection step, we interpret its benefit as the information gain relative to increasing the number of input samples. We therefore vary $|\Omega|$ between $30$ and $5000$, fit a least-squares (LS) model on $\Omega$, and project it onto the anchor-induced feasible set. Figure~\ref{fig:sphere_experiment} displays the resulting extrapolation error on $\Xi$ as a function of $|\Omega|$, with the corresponding numerical values reported in Table~\ref{tab:sphere_ls_points}.

For small data sets, the LS extrapolation error on $\Xi$ is highly unstable and can grow substantially, whereas the projected solution remains uniformly controlled by the anchor-induced envelope. For instance, with only $|\Omega|=50$ input points, the LS model attains on average $\mathcal{E}_\Xi(g_{\mathrm{LS}})=19.09\pm13.45$, while its projection yields $\mathcal{E}_\Xi(\widehat g)=7.13\pm1.61$, a reduction by more than a factor of $2$ using the same data and a simple range-based bound. Similar behavior occurs for $|\Omega|\in\{30,100,200,500,1000\}$: the LS error fluctuates between approximately $12$ and $70$, whereas the projected error remains on the scale of the anchor radius $\delta \approx 7.5$, and in most cases is even smaller.

This stabilization effect is comparable to increasing the number of LS input samples by roughly an order of magnitude. In all cases with nonzero improvement, the observed reduction lies within the theoretical interval predicted by Theorem~\ref{theorem:5}. Once the LS solution enters the feasible ball (here from some runs in $|\Omega|\gtrsim 500$, and most runs from $|\Omega|\gtrsim 2000$), the projection becomes inactive, and both curves coincide, precisely as guaranteed by the theorem.

\subsubsection{Probabilistic anchor function on a 2D Poisson problem}

We next evaluate the probabilistic certification framework in a structured PDE setting. Unlike the previous examples, where deterministic bounds were sufficient, we show that worst-case spectral certification can be overly conservative, and that the quantile-based probabilistic radius in Proposition~\ref{prop:prob_spec_bound} plays a decisive role in enabling meaningful projection corrections.

The domain  in this example is $D=[0,1]^2$, with a boundary-touching extrapolation patch $\Xi=[0.8,1]\times[0.7,1]$ and training region $\Omega=D\setminus \Xi$. We discretize $D$ on a uniform $N\times N$ grid with $N=200$, sample $M=200$ training points from $\Omega$, and corrupt the observations with i.i.d. Gaussian noise calibrated to $\mathrm{SNR}=35$ dB.

The target field $f$ is defined as the solution of the Dirichlet Poisson problem
\begin{equation}
-\Delta f(x,y)=s(x,y),\qquad f|_{\partial D}=0.
\end{equation}
This yields a smooth field with structured harmonic content. For fitting, we use the basis $\phi_{k_x,k_y}(x,y)=\sin(\pi k_x x)\sin(\pi k_y y)$ with $K=10$. We sample $T=25$ modes with $(k_x,k_y)\in\{2,\dots,K\}^2$ and random signs/magnitudes $|c^{(f)}_{k_x,k_y}|\in[0.1,7.0]$, and define the right-hand side in the same basis via
\begin{equation}
c^{(s)}_{k_x,k_y}=\pi^2(k_x^2+k_y^2)\,c^{(f)}_{k_x,k_y},
\end{equation}
so that $-\Delta f=s$ holds in the truncated basis.

\begin{figure}[t]
  \centering
  \begin{subfigure}{0.32\textwidth}
    \centering
    \includegraphics[width=\linewidth]{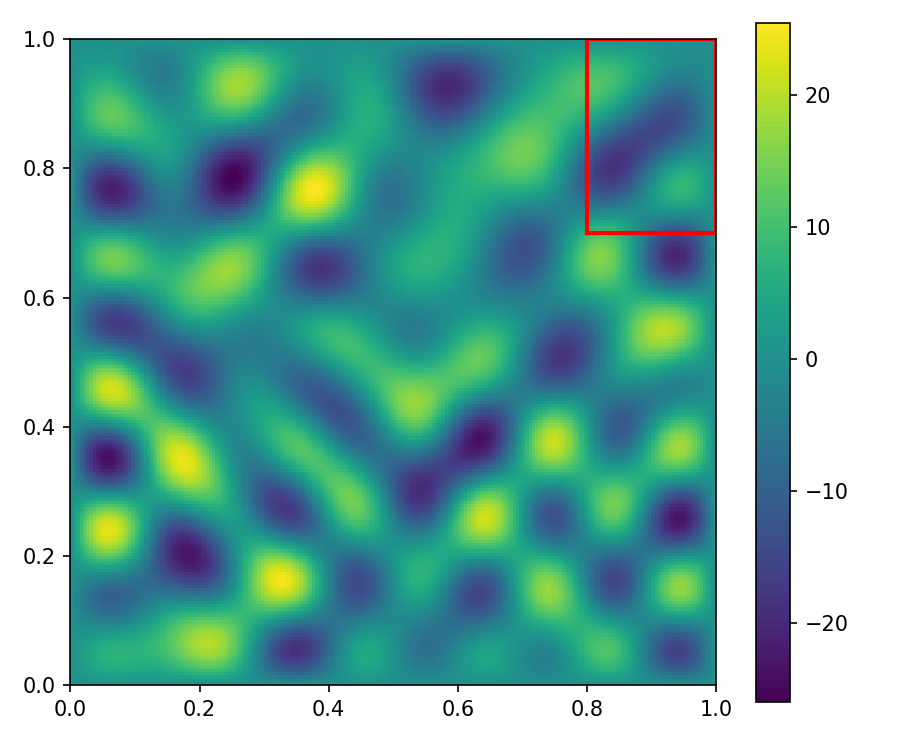}
  \end{subfigure}
  \begin{subfigure}{0.32\textwidth}
    \centering
    \includegraphics[width=\linewidth]{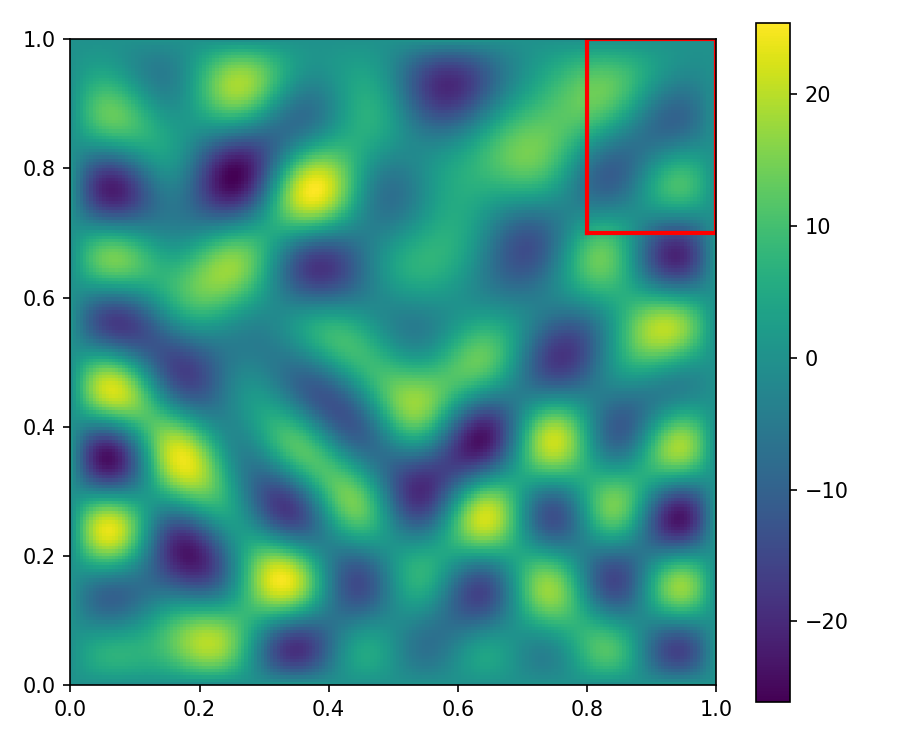}
  \end{subfigure}
  \begin{subfigure}{0.32\textwidth}
    \centering
    \includegraphics[width=\linewidth]{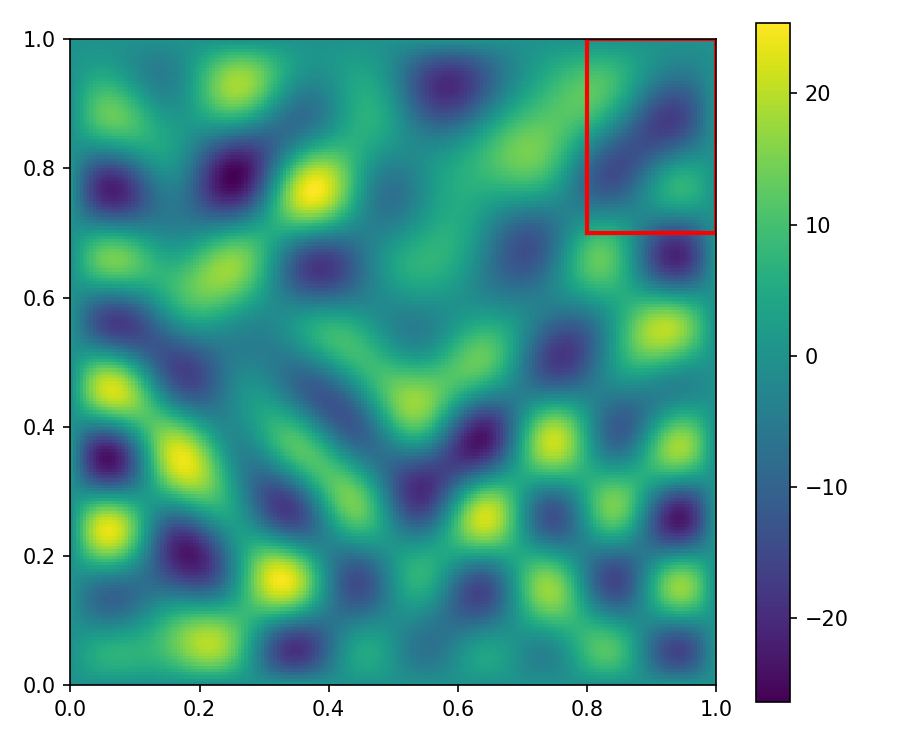}
  \end{subfigure}
  \caption{Value fields: ground truth \(f\) (left), baseline predictor \(g\) (center), and projection \(h\) (right). The extrapolation region \(\Xi\) is indicated in red.}
  \label{fig:exp_prob_anchor_pde2d_values}
\end{figure}

\begin{figure}[t]
  \centering
  \begin{subfigure}{0.49\textwidth}
    \centering
    \includegraphics[width=\linewidth]{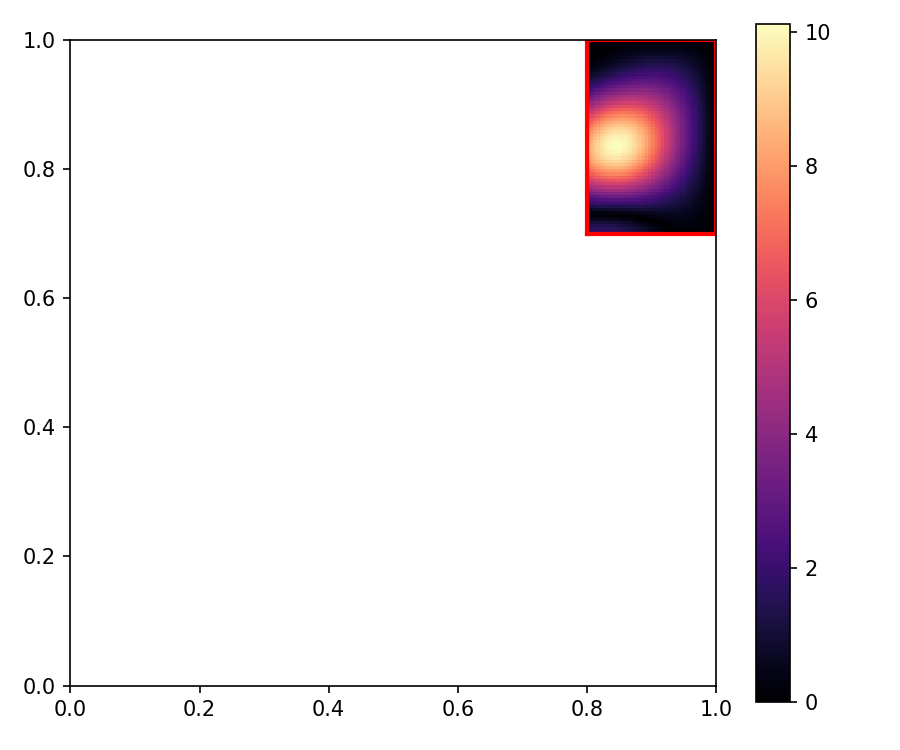}
  \end{subfigure}
  \begin{subfigure}{0.49\textwidth}
    \centering
    \includegraphics[width=\linewidth]{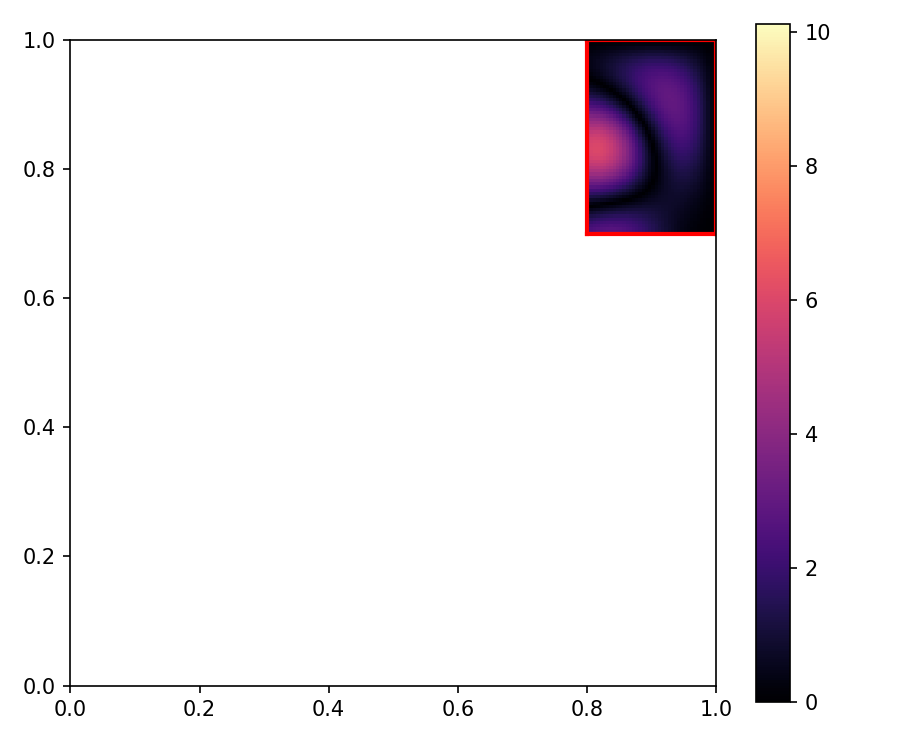}
  \end{subfigure}
  \caption{Extrapolation error on \(\Xi\): \(|f-g|\) (left) and \(|f-h|\) (right). The color scales are shared to make improvement visible.}
  \label{fig:exp_prob_anchor_pde2d_errors}
\end{figure}

We compute an anchor $a$ by LS and a baseline predictor $g$ by LASSO ($\alpha=2\times 10^{-4}$). We then form a probabilistic feasible set using the quantile-based construction of Definition~\ref{def:prob_anchor_function_sphere}, selecting a $70\%$ as a moderate-confidence setting that empirically tightens the feasible set while maintaining feasibility (see also Corollary~\ref{cor:almost_rank_one_beta_sphere}). As observed in Section~\ref{subsec:empirical_prob_kappa_sphere}, moving from worst-case to high-confidence quantiles yields significant reductions in the effective extrapolation constant, whereas further decreases in confidence provide diminishing returns relative to the loss in guarantee.

For this configuration, we obtain
\begin{equation*}
\lambda_{\max}(\tilde G_{\Xi})=6.1905\times 10^2,\quad \lambda_2(\tilde G_{\Xi})=2.4941\times 10^2,\quad
\kappa^{0.7}_{spec}=73.332,
\end{equation*}
and $\delta=3.4803$. Then, the extrapolation root errors on $\Xi$ are
\begin{equation*}
\mathcal{E}^{\mathrm{LASSO}}_{\Xi}=1.0651,\qquad \mathcal{E}^{\mathrm{Projection}}_{\Xi}=0.5148,
\end{equation*}
so the observed improvement is $\Delta \mathcal{E}_\Xi = 0.5503$. Theorem~\ref{thm:gurantee1} predicts an improvement interval,
\begin{equation*}
0.4071 \le \Delta \mathcal{E}_\Xi=0.5503 \le 0.8987,
\end{equation*}
which contains the observed gain.

Figure~\ref{fig:exp_prob_anchor_pde2d_values} visualizes the value fields over $D$: (a) the ground truth $f$, (b) the baseline predictor $g$, and (c) the projected solution $h$; the extrapolation region $\Xi$ is marked in red. Figure~\ref{fig:exp_prob_anchor_pde2d_errors} focuses on $\Xi$ and compares the absolute extrapolation errors $|f-g|$ (left) versus $|f-h|$ (right).

For comparison, using the worst-case spectral constant $\kappa_{\mathrm{spec}}$ yields $\delta = 10.1118$. In this regime, the LASSO predictor $g$ already lies within the associated feasible set, so the projection leaves the solution unchanged on $\Xi$. This comparison highlights the practical advantage of controllable probabilistic certification: by reducing conservatism in the radius, the feasible set becomes sufficiently informative to activate meaningful correction while preserving theoretical guarantees.

\section*{Declarations}

\noindent\textbf{Acknowledgement}  
NS is partially supported by NSF-BSF award 2024791, BSF award 2024266, and DFG award 514588180.

\noindent\textbf{Data availability}  
The geomagnetic data used in this study are derived from the publicly available WMM–2025 Gauss coefficients released by NOAA/NCEI~\cite{noaa_wmm_coefficients}.

\noindent\textbf{Code availability}  
The code used in this paper is available at:  
\url{https://github.com/guyhay94/anchor_based_extrapolation/tree/main}

\bibliographystyle{plain}
\bibliography{refs}
\appendix

\section{Appendix}

\subsection{Proof of Lemma~\ref{lemma:circle_point_circle_with_bounds}}
\label{appendix:lemma_circle_point_circle_with_bounds_proof}
\begin{proof}

\textbf{Existence of extrema.} The function $d$ is continuous on the compact set $\overline{\mathbb D}_R$, hence by the Extreme Value Theorem it attains a global minimum and a global maximum.

\medskip
\textbf{Step 1: interior critical points (KKT~\cite{boltyanski1999geometric} first step).}
On the open disk $\mathbb D_R=\{x^2+y^2<R^2\}$ the partial derivatives exist and are
\begin{align*}
 \frac{\partial d}{\partial x}(x,y) &= \frac{x-p}{\sqrt{(p-x)^2+y^2}} - \frac{x-R}{\sqrt{(R-x)^2+y^2}},\\[4pt]
 \frac{\partial d}{\partial y}(x,y) &= y\Bigg(\frac{1}{\sqrt{(p-x)^2+y^2}} - \frac{1}{\sqrt{(R-x)^2+y^2}}\Bigg).
\end{align*}
A stationary point in the interior must satisfy both partials equal to zero. From $\partial d/\partial y=0$ we have two alternatives:
\begin{enumerate}
\item $y=0$, or
\item the bracketed term vanishes, i.e. $\dfrac{1}{\sqrt{(p-x)^2+y^2}}=\dfrac{1}{\sqrt{(R-x)^2+y^2}}$, which is equivalent to $(p-x)^2=(R-x)^2$ and hence to
\[
 x=\frac{p+R}{2}.
\]
\end{enumerate}
Because $p>R$, the value $x=(p+R)/2$ lies strictly to the right of the circle and therefore cannot occur in $\mathbb D_R$. Thus any interior stationary point must satisfy $y=0$.

For $y=0$ we compute directly,
\[
 d(x,0)=|p-x|-|R-x|=(p-x)-(R-x)=p-R\quad\text{for }-R<x<R.
\]
So both partial derivatives vanish along the open horizontal diameter, and $d$ is constant there. These points form a non-isolated \emph{plateau of stationary points}. Moving slightly off the axis (take small $t\ne0$) yields
\begin{eqnarray*}
 \frac{\partial d}{\partial y}(x,t) &=& t\Bigg(\frac{1}{\sqrt{(p-x)^2+t^2}}-\frac{1}{\sqrt{(R-x)^2+t^2}}\Bigg) \\
 &<& t\Bigg(\frac{1}{|p-x|}-\frac{1}{|R-x|}\Bigg) <0 , \quad\text{for }t>0,
\end{eqnarray*}
because $|p-x|>|R-x|$. Hence, any small off-axis perturbation decreases $d$, while motions along the axis leave $d$ unchanged. Consequently, every interior point with $y=0$ attains the global maximum value $p-R$ (a non-strict/global plateau of maximizers).

\medskip
\textbf{Step 2: boundary (KKT Lagrange multipliers).}
Having exhausted interior candidates, any remaining extrema must lie on the boundary circle $g(x,y):=x^2+y^2-R^2=0$. The KKT conditions reduce to the Lagrange multiplier equations $\nabla d=\lambda\nabla g$ (the constraint qualification holds since $\nabla g\neq0$ on the circle). Put
\[
 r_P:=\sqrt{(p-x)^2+y^2}=PB,\qquad r_Q:=\sqrt{(R-x)^2+y^2}=QB.
\]
The Lagrange system becomes
\begin{align}
 -\frac{p-x}{r_P}+\frac{R-x}{r_Q} &= 2\lambda x, \label{eq:lag_x}\\[4pt]
 y\Big(\frac{1}{r_P}-\frac{1}{r_Q}\Big) &= 2\lambda y. \label{eq:lag_y}
\end{align}
If $y=0$ on the boundary then $(x,y)=(\pm R,0)$ and one computes $d(\pm R,0)=p-R$, recovering the upper bound. Assume henceforth $y\ne0$. Dividing \eqref{eq:lag_y} by $y$ gives
\[
 2\lambda=\frac{1}{r_P}-\frac{1}{r_Q}.
\]
Substituting into \eqref{eq:lag_x} and simplifying yields the equivalent relation
\[
 -\frac{p}{r_P}+\frac{R}{r_Q}=0 \quad\Longleftrightarrow\quad \frac{r_P}{r_Q}=\frac{p}{R}. \tag{*}
\]
On the circle $x^2+y^2=R^2$ we have
\[
 r_Q^2=(R-x)^2+y^2=2R^2-2Rx=2R(R-x),\qquad r_P^2=(p-x)^2+y^2=p^2-2px+R^2.
\]
Using (*) and squaring both sides produces
\[
 p^2-2px+R^2=\frac{p^2}{R^2}\big(2R^2-2Rx\big)=2p^2-\frac{2p^2}{R}x.
\]
Carrying out the algebra, we get:
\[
 p^2-2px+R^2=2p^2-\frac{2p^2}{R}x
 \;\Longrightarrow\; x\Big(\frac{2p^2}{R}-2p\Big)=p^2-R^2=(p+R)(p-R)
\]
\[
 \;\Longrightarrow\; 2px(p-R)=R(p+R)(p-R) \;\Longrightarrow\; x=\frac{R(p+R)}{2p}.
\]
Since $p>R$ we have $0<x_\star<R$, and then
\[
 y_\star=\pm\sqrt{R^2-x_\star^2}.
\]
Evaluate the distances at $B_\star=(x_\star,y_\star)$. From (*) we have $r_P=(p/R)r_Q$ and using $r_Q^2=2R(R-x_\star)$ one finds
\[
 r_Q(B_\star)=R\sqrt{\frac{p-R}{p}},\qquad r_P(B_\star)=\sqrt{p(p-R)}.
\]
Therefore, the value of $d$ at these boundary candidates is
\[
 d(B_\star)=r_P(B_\star)-r_Q(B_\star)=r_Q(B_\star)\Big(\frac{p}{R}-1\Big)=\frac{(p-R)^{3/2}}{\sqrt{p}}.
\]
This is the global minimum on $\overline{\mathbb D}_R$.

\medskip
We note that
\[
d(B_\star)=\frac{(p-R)^{3/2}}{\sqrt{p}}=(p-R)\sqrt{\frac{p-R}{p}} < p-R
\]
\end{proof}

\section{Derivation: generic sphere concentration bound for a quadratic form}
\label{app:derivation_generic_sphere_quantile}

Let $Q_{1-\delta}(X):=\inf\{t\in\mathbb{R}:\mathbb{P}(X\le t)\ge 1-\delta\}$ denote the
$(1-\delta)$--upper quantile of a real-valued random variable $X$.

\subsection{Generic concentration on the sphere to a quantile bound}

Generic concentration on the sphere (e.g.\ L\'evy's lemma; see \cite[Thm.~5.1.3]{vershyninHDP})
implies that if $s\sim\mathrm{Unif}(S^{d-1})$ and $f:S^{d-1}\to\mathbb{R}$ is $L$--Lipschitz
(with respect to the Euclidean metric), then for all $t\ge 0$,
\begin{equation}
\label{eq:vershynin_sphere_tail_app}
\mathbb{P}\!\left(\,|f(s)-\mathbb{E}f(s)|\ge t\,\right)
\;\le\;
2\exp\!\left(-c\,d\,\frac{t^2}{L^2}\right),
\end{equation}
where $c>0$ is an absolute constant.
(In \cite[Thm.~5.1.3]{vershyninHDP} the statement is given for
$X\sim\mathrm{Unif}(\sqrt{d}\,S^{d-1})$; applying it to $X=\sqrt{d}\,s$
and rescaling the Lipschitz constant yields~\eqref{eq:vershynin_sphere_tail_app} for the unit sphere.)

Specialize in the quadratic form
\[
f(s):=s^\top G s,
\qquad G\succeq 0,\quad \lambda_{\max}=\|G\|_{\mathrm{op}}.
\]
For $s,t\in S^{d-1}$,
\begin{multline*}
|f(s)-f(t)|
=
|(s-t)^\top G s+t^\top G(s-t)|
\\ \le 
\|s-t\|_2\bigl(\|G\|_{\mathrm{op}}\|s\|_2+\|G\|_{\mathrm{op}}\|t\|_2\bigr)
=
2\lambda_{\max}\|s-t\|_2,
\end{multline*}
and therefore, $\|f\|_{\mathrm{Lip}}\le 2\lambda_{\max}$.
Combining this with~\eqref{eq:vershynin_sphere_tail_app} gives
\[
\mathbb{P}\!\left(\,f(s)\ge \mathbb{E}f(s)+t\,\right)
\;\le\;
2\exp\!\left(-c\,d\,\frac{t^2}{4\lambda_{\max}^2}\right).
\]
Setting the right-hand side equal to $\delta$ and solving for $t$ yields: for some absolute constant
$C>0$,
\begin{equation}
\label{eq:generic_concentration_quantile_app}
Q_{1-\delta}\!\bigl(f(s)\bigr)
\;\le\;
\mathbb{E}[f(s)]
+
C\,\lambda_{\max}\sqrt{\frac{\log(2/\delta)}{d}}.
\end{equation}

Since $\mathbb{E}[f(s)]=\mathrm{tr}(G)/d$ (Remark~\ref{rem:mean_eigenvalue_consistency}),
\eqref{eq:generic_concentration_quantile_app} becomes
\begin{equation}
\label{eq:generic_concentration_quantile_trace_app}
Q_{1-\delta}\!\bigl(s^\top G s\bigr)
\;\le\;
\frac{\mathrm{tr}(G)}{d}
+
C\,\lambda_{\max}\sqrt{\frac{\log(2/\delta)}{d}}.
\end{equation}
Equivalently, if $c=\mathcal{E}s$ (so $\|c\|_2=\mathcal{E}$), then with probability at least $1-\delta$,
\[
c^\top G c
=
\|c\|_2^2\,s^\top G s
\;\le\;
\|c\|_2^2\left(
\frac{\mathrm{tr}(G)}{d}
+
C\,\lambda_{\max}\sqrt{\frac{\log(2/\delta)}{d}}
\right).
\]

\subsection{Comparison to the single-eigenvalue (Beta) model}

In the single eigenvalue case $G=\lambda_{\max}uu^\top$, Lemma~\ref{lem:rank_one_beta_sphere}
yields the \emph{exact} distribution $s^\top G s=\lambda_{\max}Z_d$ with
$Z_d\sim\mathrm{Beta}(\tfrac12,\tfrac{d-1}{2})$, and hence
\[
Q_{1-\delta}\!\bigl(s^\top G s\bigr)=\lambda_{\max}\,Q_{1-\delta}(Z_d).
\]
For any fixed confidence level $1-\delta$ (e.g.\ $1-\delta=0.95$), one has
$Q_{1-\delta}(Z_d)=\Theta(1/d)$, and therefore
\[
Q_{1-\delta}\!\bigl(s^\top G s\bigr)=\Theta\!\left(\frac{\lambda_{\max}}{d}\right).
\]
By comparison, the generic concentration bound~\eqref{eq:generic_concentration_quantile_trace_app}
specialized to rank one gives
\[
Q_{1-\delta}\!\bigl(s^\top G s\bigr)
\;\le\;
\frac{\lambda_{\max}}{d}
+
C\,\lambda_{\max}\sqrt{\frac{\log(2/\delta)}{d}}
=
\lambda_{\max}\Bigl(\Theta(\tfrac{1}{d})+\Theta(\tfrac{1}{\sqrt d})\Bigr),
\]
whose dominant term for large $d$ (fixed $\delta$) is $\Theta(\lambda_{\max}/\sqrt d)$.
Thus, already for fixed $\delta$ (including $\delta=0.05$), the concentration-based quantile
upper bound is asymptotically looser than the Beta-based quantile by a factor on the order of
$\sqrt d$.


\end{document}